\def\@url#1{{\tt\def~{\lower3.5pt\hbox{\char'176}}\def\_{\char'137}#1}}
\newtheorem{theorem}{Theorem}[section]
\newtheorem{corollary}[theorem]{Corollary}
\newtheorem{proposition}[theorem]{Proposition}
\newtheorem{lemma}[theorem]{Lemma}
\theoremstyle{definition}
\newtheorem{definition}[theorem]{Definition}
\newtheorem{example}[theorem]{Example}
\newtheorem{remark}[theorem]{Remark}
\newtheorem{conjecture}[theorem]{Conjecture}
\theoremstyle{plain}
\newtheorem{xxthm}{Theorem}
\newtheorem{xxcor}[xxthm]{Corollary}
\newtheorem{xxconj}[xxthm]{Conjecture}
\newcommand{\Mdef}[2]{\newcommand{#1}{\relax \ifmmode #2 \else $#2$\fi}}
\Mdef{\bA}{\mathbb{A}}
\Mdef{\bB}{\mathbb{B}}
\Mdef{\bC}{\mathbb{C}}
\Mdef{\bD}{\mathbb{D}}
\Mdef{\bE}{\mathbb{E}}
\Mdef{\bF}{\mathbb{F}}
\Mdef{\bG}{\mathbb{G}}
\Mdef{\bH}{\mathbb{H}}
\Mdef{\bI}{\mathbb{I}}
\Mdef{\bJ}{\mathbb{J}}
\Mdef{\bK}{\mathbb{K}}
\Mdef{\bL}{\mathbb{L}}
\Mdef{\bM}{\mathbb{M}}
\newcommand{\bN}{\mathbb{N}}
\Mdef{\bO}{\mathbb{O}}
\Mdef{\bP}{\mathbb{P}}
\Mdef{\bQ}{\mathbb{Q}}
\Mdef{\bR}{\mathbb{R}}
\Mdef{\bS}{\mathbb{S}}
\Mdef{\bT}{\mathbb{T}}
\Mdef{\bU}{\mathbb{U}}
\Mdef{\bV}{\mathbb{V}}
\Mdef{\bW}{\mathbb{W}}
\Mdef{\bX}{\mathbb{X}}
\Mdef{\bY}{\mathbb{Y}}
\Mdef{\bZ}{\mathbb{Z}}
\Mdef{\bbS}{\mathbb{S}}
\Mdef{\scrA}{\mathscr{A}}
\Mdef{\scrB}{\mathscr{B}}
\Mdef{\scrC}{\mathscr{C}}
\Mdef{\scrD}{\mathscr{D}}
\Mdef{\scrE}{\mathscr{E}}
\Mdef{\scrF}{\mathscr{F}}
\Mdef{\scrG}{\mathscr{G}}
\Mdef{\scrH}{\mathscr{H}}
\Mdef{\scrI}{\mathscr{I}}
\Mdef{\scrJ}{\mathscr{J}}
\Mdef{\scrK}{\mathscr{K}}
\Mdef{\scrL}{\mathscr{L}}
\Mdef{\scrM}{\mathscr{M}}
\Mdef{\scrN}{\mathscr{N}}
\Mdef{\scrO}{\mathscr{O}}
\Mdef{\scrP}{\mathscr{P}}
\Mdef{\scrQ}{\mathscr{Q}}
\Mdef{\scrR}{\mathscr{R}}
\Mdef{\scrS}{\mathscr{S}}
\Mdef{\scrT}{\mathscr{T}}
\Mdef{\scrU}{\mathscr{U}}
\Mdef{\scrV}{\mathscr{V}}
\Mdef{\scrW}{\mathscr{W}}
\Mdef{\scrX}{\mathscr{X}}
\Mdef{\scrY}{\mathscr{Y}}
\Mdef{\scrZ}{\mathscr{Z}}
\Mdef{\mcA}{\mathcal{A}}
\Mdef{\mcB}{\mathcal{B}}
\Mdef{\mcC}{\mathcal{C}}
\Mdef{\mcD}{\mathcal{D}}
\Mdef{\mcE}{\mathcal{E}}
\Mdef{\mcF}{\mathcal{F}}
\Mdef{\mcG}{\mathcal{G}}
\Mdef{\mcH}{\mathcal{H}}
\Mdef{\mcI}{\mathcal{I}}
\Mdef{\mcJ}{\mathcal{J}}
\Mdef{\mcK}{\mathcal{K}}
\Mdef{\mcL}{\mathcal{L}}
\Mdef{\mcM}{\mathcal{M}}
\Mdef{\mcN}{\mathcal{N}}
\Mdef{\mcO}{\mathcal{O}}
\Mdef{\mcP}{\mathcal{P}}
\Mdef{\mcQ}{\mathcal{Q}}
\Mdef{\mcR}{\mathcal{R}}
\Mdef{\mcS}{\mathcal{S}}
\Mdef{\mcT}{\mathcal{T}}
\Mdef{\mcU}{\mathcal{U}}
\Mdef{\mcV}{\mathcal{V}}
\Mdef{\mcW}{\mathcal{W}}
\Mdef{\mcX}{\mathcal{X}}
\Mdef{\mcY}{\mathcal{Y}}
\Mdef{\mcZ}{\mathcal{Z}}
\Mdef{\tA}{\tilde{A}}
\Mdef{\tB}{\tilde{B}}
\Mdef{\tC}{\tilde{C}}
\Mdef{\tE}{\tilde{E}}
\Mdef{\tH}{\tilde{H}}
\Mdef{\tK}{\tilde{K}}
\Mdef{\tL}{\tilde{L}}
\Mdef{\tM}{\tilde{M}}
\Mdef{\tN}{\tilde{N}}
\Mdef{\tP}{\tilde{P}}
\Mdef{\Ab}{\overline{A}}
\Mdef{\Bb}{\overline{B}}
\Mdef{\Cb}{\overline{C}}
\Mdef{\Db}{\overline{D}}
\Mdef{\Eb}{\overline{E}}
\Mdef{\Fb}{\overline{F}}
\Mdef{\Gb}{\overline{G}}
\Mdef{\Hb}{\overline{H}}
\Mdef{\Ib}{\overline{I}}
\Mdef{\Jb}{\overline{J}}
\Mdef{\Kb}{\overline{K}}
\Mdef{\Lb}{\overline{L}}
\Mdef{\Mb}{\overline{M}}
\Mdef{\Nb}{\overline{N}}
\Mdef{\Ob}{\overline{O}}
\Mdef{\Pb}{\overline{P}}
\Mdef{\Qb}{\overline{Q}}
\Mdef{\Rb}{\overline{R}}
\Mdef{\Sb}{\overline{S}}
\Mdef{\Tb}{\overline{T}}
\Mdef{\Ub}{\overline{U}}
\Mdef{\Vb}{\overline{V}}
\Mdef{\Wb}{\overline{W}}
\Mdef{\Xb}{\overline{X}}
\Mdef{\Yb}{\overline{Y}}
\Mdef{\Zb}{\overline{Z}}
\newcommand{\co}{\colon}
\def\endash{\mathchar"2D}
\newcommand{\leftmod}{\endash \textnormal{mod}}
\newcommand{\lra}{\longrightarrow}
\newcommand{\spO}{\mathrm{Sp}^{O}}
\newcommand{\GspO}{G\mathrm{Sp}^{O}}
\newcommand{\QGspO}{G\mathrm{Sp}^{O}_\bQ}
\newcommand{\eqtop}[1]{#1 \mathrm{Top_*}}
\newcommand{\Ch}{\mathrm{Ch}}
\Mdef{\id}{\mathrm{Id}}
\newcommand{\adjunction}[4]{
\xymatrix{
#1:#2 \ar@<0.7ex>[r] &
\ar@<0.7ex>[l] #3:#4
}}
\newcommand{\rank}{\mathrm{rank}}
\newcommand{\smashprod}{\wedge}
\Mdef{\bhom}{\mathbf{\hat{H}om}}
\Mdef{\Mod}{\mathrm{mod}}
\newcommand{\Ho}{\mathrm{Ho}}
\newcommand{\colim}{\mathop{  \mathrm{colim }} }
\newcommand{\weylsheaf}[1]{\textnormal{Weyl} \endash #1 \endash \textnormal{sheaf}_{\bQ}(\sub #1)}
\newcommand{\Rweylsheaf}[2]{\textnormal{Weyl} \endash #1 \endash \textnormal{sheaf}_{#2}(\sub #1)}
\newcommand{\adic}[1]{\bZ_{#1}^{\wedge}}
\newcommand{\mackey}[1]{\textnormal{Mackey}_{\bQ}(#1)}
\newcommand{\intmackey}[1]{\textnormal{Mackey}(#1)}
\newcommand{\sub}{\mathcal{S}}
\newcommand{\Gsetsdiscfin}{G_{df} \endash \textnormal{sets}}
\newcommand{\abgps}{\textnormal{Ab}}
\newcommand{\opensub}{\underset{\textnormal{open}}{\leqslant}}
\newcommand{\opennormalsub}{\underset{\textnormal{open}}{\trianglelefteqslant}}
\DeclareMathOperator{\stab}{\textnormal{stab}}
\DeclareMathOperator{\coker}{\textnormal{Coker}}
\DeclareMathOperator{\const}{\textnormal{Const}}
\DeclareMathOperator{\core}{\textnormal{Core}}
\newcommand{\intburnsidering}[1]{\textbf{\textsf{\upshape{A}}}(#1)} 
\newcommand{\ratburnsidering}[1]{\textbf{\textsf{\upshape{A}}}_{\bQ}(#1)}
\newcommand{\piorbcat}{\pi_0(\mathfrak{O}_G)}
\newcommand{\piratorbcat}{\pi_0(\mathfrak{O}^\bQ_G)}
\newcommand{\spanunstableorbcat}{\textnormal{Span}(\Gsetsdiscfin)}
\newcommand{\spanunstableorbcatN}[1]{\textnormal{Span}(#1_{df} \endash \textnormal{sets})}
\newcommand{\func}{\textnormal{Func}}
\DeclareMathOperator{\ext}{Ext}
\title{Classifying rational \texorpdfstring{$G$}{G}-spectra for profinite \texorpdfstring{$G$}{G}}
\author[Barnes]{David Barnes}
\address[Barnes]{Mathematical Sciences Research Centre, Queen's University Belfast}
\author[Sugrue]{Danny Sugrue}
\address[Sugrue]{Mathematical Sciences Research Centre, Queen's University Belfast}
\thanks{
The second author gratefully acknowledges support from the
Engineering and Physical Sciences Research Council under Grant 1631308.
No data were created or analysed in this study.
}
\begin{document}
\begin{abstract}
For $G$ an arbitrary profinite group, we construct an algebraic model for rational $G$-spectra in terms of 
$G$-equivariant sheaves over the space of subgroups of $G$. 
This generalises the known case of finite groups to a much wider class of topological groups,
and improves upon earlier work of the first author on the case where $G$ is the $p$-adic integers.  

As the purpose of an algebraic model is to allow one to use homological algebra to study 
questions of homotopy theory, we prove that the homological dimension (injective dimension)
of the algebraic model is determined by the Cantor--Bendixson rank of the 
space of closed subgroups of the profinite group $G$. 
This also provides a calculation of the homological dimension of the category of rational Mackey functors.
\end{abstract}

\maketitle

\tableofcontents

\section{Introduction}

The usefulness of equivariant cohomology theories in equivariant (stable) homotopy theory has long been proven. 
Examples of equivariant cohomology theories include the equivariant $K$-theory of Segal, \cite{segal68}
and the equivariant cobordism spectra used in Hill, Hopkins and Ravenel, \cite{HillHopkinsRavenel}.
To effectively study equivariant cohomology theories one studies the 
category of their representing objects: equivariant spectra. That is,
Brown representability holds equivariantly.  

The study of equivariant spectra up to homotopy is even more demanding than the non-equivariant case, 
so one often works with rational equivariant spectra. 
Under Brown representability, rational equivariant spectra correspond to equivariant cohomology
theories that take values in rational vector spaces.
Rationalising preserves most of the interesting behaviour coming from the group, 
while removing much of the topological complexity. 

A major goal in the study of rational equivariant stable homotopy theory is to find 
a more tractable model category that has the same homotopy theory as rational equivariant spectra.
That is, one chooses a group $G$ of interest, constructs an 
abelian category $\mcA(G)$ and a Quillen equivalence 
between the \emph{algebraic model} $\Ch( \mcA(G) )$ 
and the model category of rational $G$--spectra.  
The Quillen equivalence induces an equivalence of categories between the 
homotopy category of rational $G$--spectra and the homotopy category of the 
algebraic model. 
The primary advantage of having an algebraic model is that 
one can use the simplicity of the abelian category $\mcA(G)$
and the tools of homological algebra
to construct objects and calculate sets of maps in 
the rational $G$-equivariant stable homotopy category. 
For an introduction to algebraic models and summary of the known cases see \cite{BKalgmodels} 
by the first author and K\k{e}dziorek.

In this paper, the authors generalise the known case of algebraic models for finite groups
(see Schwede and Shipley in \cite[Example 5.1.2]{ss03stabmodcat}) to profinite groups. 
Profinite groups are a commonly encountered class of compact topological groups, 
appearing most often as Galois groups or when one has a diagram of finite groups. 
They are defined as the compact Hausdorff totally disconnected topological groups
and it can be shown that a group is profinite if and only if 
it is the limit of a filtered system of finite groups. 
As a first example, the Morava stabilizer group $\mathbb{S}_n$ 
from chromatic homotopy theory is profinite.
Secondly, number theory makes substantial use of profinite groups, as seen in 
Bley and Boltje \cite{BB04}.
Thirdly, the \'{e}tale fundamental groups of algebraic geometry are profinite. 
This ubiquity drives our interest in rational $G$-spectra for profinite $G$ and hence
our interest in finding algebraic models in the profinite case.

\subsection{Main results}

Let $G$ be a profinite group and let $\sub G$ be the space of closed subgroups of $G$, 
topologised as the inverse limit of the finite discrete spaces $\sub G/N$, for $N$ 
open and normal in $G$. There is an abelian category of rational $G$-equivariant sheaves
over $\sub G$. Consider the full subcategory of those equivariant sheaves $E$ such 
that the stalk $E_K$ over the closed subgroup $K$ is $K$-fixed. 
These are called rational Weyl-$G$-sheaves and are introduced in earlier work of the authors, 
\cite{BSmackey} and \cite{BSsheaves}.
In this paper, the category of rational Weyl-$G$-sheaves occur as the abelian category $\mcA(G)$ 
used to make the algebraic model $\Ch( \mcA(G) )$ for rational $G$-spectra.

\begin{xxthm}[Corollary \ref{cor:gpsectrasheaf}]
For $G$ a profinite group, there is a zig-zag of 
Quillen equivalences between the model category of rational $G$-spectra
and the model category of chain complexes of rational Weyl-$G$-sheaves. 
\end{xxthm}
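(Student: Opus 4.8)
The approach mirrors Schwede and Shipley's analysis of finite groups in \cite[Example 5.1.2]{ss03stabmodcat}, and proceeds in three stages: a Morita-theoretic identification of rational $G$-spectra with modules over a spectral category of orbits; a formality argument identifying that spectral category with the rational Burnside (span) category of $G$; and the authors' earlier identification of rational $G$-Mackey functors with rational Weyl-$G$-sheaves.

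For the first stage, rational $G$-spectra form a cofibrantly generated stable model category admitting a compatible enrichment in spectra, and the suspension spectra $\Sigma^\infty_+(G/H)$ for $H \opensub G$ form a set of compact generators --- the orbits of open subgroups being the natural cells; the closed subgroups of infinite index, and the topology on $\sub G$, enter the picture only at the last stage, through the identification of Mackey functors with sheaves. (If the chosen model of rational $G$-spectra for profinite $G$ does not manifestly carry the spectral enrichment, one first replaces it by one that does, via a preliminary Quillen equivalence.) The Schwede--Shipley tilting theorem then supplies a zig-zag of Quillen equivalences between rational $G$-spectra and modules over the spectral category $\mcE$ whose objects are these orbits and whose morphism spectra are the rationalised derived function spectra $\map\bigl(\Sigma^\infty_+(G/H),\,\Sigma^\infty_+(G/K)\bigr)_{\bQ}$.

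For the second stage, the tom Dieck splitting together with the rational vanishing of the higher equivariant stable homotopy groups of orbits shows that $\pi_*\mcE(G/H,G/K)$ is concentrated in degree $0$, where it is the rational Burnside module; thus the homotopy category of $\mcE$ is the full subcategory of $\ratspanunstableorbcat$ on the orbits, which is Morita equivalent to $\ratspanunstableorbcat$ itself. Since everything in sight is rational, one passes to $H\bQ$-module spectra and applies Shipley's chain of Quillen equivalences between $H\bQ$-modules and $\Ch(\bQ)$ at the level of enriched categories, replacing $\mcE$ by a differential graded category over $\bQ$ with the same homology. Being concentrated in a single degree, this DG category is intrinsically formal --- the obstructions lie in negative-degree Hochschild, equivalently Andr\'{e}--Quillen, cohomology, which vanishes --- so it is quasi-equivalent to $\ratspanunstableorbcat$ regarded as a DG category in degree $0$. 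Hence modules over $\mcE$ are, through a further zig-zag, Quillen equivalent to $\Ch$ of the abelian category of additive functors $\ratspanunstableorbcat \to \bQ\text{-mod}$, namely $\Ch(\mackey{G})$.

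For the third stage, the equivalences of abelian categories of \cite{BSmackey} and \cite{BSsheaves} identify $\mackey{G}$ with the category of rational Weyl-$G$-sheaves over $\sub G$, hence $\Ch(\mackey{G})$ with chain complexes of rational Weyl-$G$-sheaves (with compatible model structures, so the identification is in fact an equivalence of model categories). Concatenating the three stages gives the claimed zig-zag. The decisive and most delicate step is the second: one must fix the correct compact generators and check compactness in the profinite setting, compute $\pi_*\mcE$ uniformly across all pairs of open subgroups (this is precisely the computation of the rational equivariant stable stems between orbits), and faithfully transport the spectral enrichment along the passage from $G$-spectra to $H\bQ$-modules to $\Ch(\bQ)$-categories while controlling the possibly infinite indexing category of orbits; the formality itself is then routine. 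The first and third stages are comparatively formal, the third being in essence a citation of the authors' prior work.
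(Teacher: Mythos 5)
Your proposal is correct and follows essentially the same route as the paper: tilting/Morita theory applied to the compact generators $\Sigma^\infty G/H_+$ ($H$ open), the degree-zero concentration of rational stable maps between orbits via tom Dieck splitting (the paper packages your formality step as a direct citation of Schwede--Shipley's Theorem 5.1.1 and Proposition B.2.1), identification of the rational stable orbit category with the span category (the paper's Theorem \ref{thm:spantoorbit}, proved by a colimit over finite quotients $G/N$), and finally the Mackey-functors-to-Weyl-sheaves equivalence of \cite{BSmackey}. The step you rightly flag as decisive is exactly where the paper invests its effort, in Section \ref{sec:spans}.
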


This result generalises work of the first author \cite{barnespadic} 
on the case where $G=\adic{p}$ is the $p$-adic integers. As with the current work,
that paper uses the tilting theory (Morita theory) of 
Schwede and Shipley \cite[Theorem 5.1]{ss03stabmodcat} to obtain a
Quillen equivalence between rational $G$-spectra and a category of 
chain complexes in an abelian category. 
The $p$-adic case uses a hand-crafted abelian 
category designed for that specific group. Contrastingly, the current work 
uses rational $G$-Mackey functors in the tilting theory step, then applies 
the equivalence between rational $G$-Mackey functors and rational 
Weyl-$G$-sheaves of \cite[Theorem A]{BSmackey} to obtain the final result.

There are two reasons why the sheaf description of the algebraic model is 
important. The first is that Greenlees' conjecture on 
algebraic models (for compact Lie groups) \cite{Gconjecture}
is described in terms of sheaves over $\sub G$, the space of closed subgroups of $G$.
This result is the first realisation of that conjecture, albeit for 
profinite groups. 

The second is that using the sheaf description we can calculate the homological dimension
(also called the injective dimension) of the abelian category. 
This dimension is a measure of the complexity of the abelian model.
In this case the result is phrased in terms of the 
Cantor--Bendixson rank of the space of subgroups $\sub G$, see 
Section \ref{sec:injdim} for details. 
When $G$ is profinite, $\sub G$ is a profinite space (one that is
compact, Hausdorff and totally disconnected).
The Cantor--Bendixson rank of a profinite space can be thought of as a measure of how far the 
space is from being discrete. To illustrate, 
a discrete space has rank 1 and $\sub \adic{p}$, 
consisting of countably many points with one accumulation point, has rank 2.

\begin{xxthm}[Corollary \ref{cor:injectivedimmain}]
Let $G$ be a profinite group whose space of subgroups $\sub G$ is scattered of Cantor--Bendixson rank $n$.
The homological dimension of the category of rational Weyl-$G$-sheaves is $n-1$. 

If $\sub G$ has infinite Cantor--Bendixson rank, then the homological dimension 
of the category of rational Weyl-$G$-sheaves is infinite.
\end{xxthm}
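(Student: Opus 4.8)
The plan is to reduce the computation of the homological dimension of the category of rational Weyl-$G$-sheaves to a computation about sheaves of $\bQ$-vector spaces over the profinite space $\sub G$, stratified by the Cantor--Bendixson filtration. First I would recall that, over a profinite (Stone) space $X$, a sheaf of $\bQ$-modules is determined by its stalks together with the equivariant/descent data, and that the space $\sub G$ carries the $G$-action by conjugation; the Weyl condition (stalk $E_K$ is $K$-fixed) means the relevant local data at $K$ is a $\bQ[W_G(K)]$-module, which since we are rational and $W_G(K)$ is profinite is just a $\bQ$-vector space with a smooth action, hence (by rational representation theory of profinite groups, semisimplicity at finite level) injective objects are abundant and the only obstruction to injectivity is global-to-local gluing, i.e. the topology of $\sub G$. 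This is exactly the phenomenon already present in the classical computation of injective dimension of sheaves over a spectral/Noetherian-type space in terms of Krull dimension, but here the correct invariant is the Cantor--Bendixson rank because $\sub G$ is scattered rather than irreducible.

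Next I would set up the Cantor--Bendixson derivative filtration $\sub G = X_0 \supseteq X_1 \supseteq \cdots$, where $X_{i+1}$ is the set of accumulation points of $X_i$, and use the hypothesis that $X_n = \emptyset$ while $X_{n-1} \neq \emptyset$. Each locally closed stratum $X_i \setminus X_{i+1}$ is discrete (open in $X_i$), so sheaves supported there are products of skyscrapers and are injective once the stalk data is injective — which, rationally, it always is. I would then prove by induction on $n$ that any Weyl-$G$-sheaf admits an injective resolution of length $n-1$: given $E$, push forward/restrict along the closed inclusion $X_1 \hookrightarrow X_0$ and the open complement; the piece on the discrete open part $X_0 \setminus X_1$ is injective, and the kernel of the evaluation map to its injective hull is supported on $X_1$, where by induction it has injective dimension $\le n-2$. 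Splicing gives the bound $\injdim \le n-1$. Here one must be careful that the pushforward along the closed inclusion is exact (true, since the inclusion is a closed immersion of profinite spaces) and preserves the Weyl condition, and that it sends injectives to injectives (its left adjoint, restriction, being exact).

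For the lower bound, the cleanest route is to exhibit, for a space of Cantor--Bendixson rank exactly $n$, an explicit Weyl-$G$-sheaf $F$ and a class in $\Ext^{n-1}(F, -)$ that does not vanish. I would take $F$ to be a constant-type sheaf on a closed chain $Y \subseteq \sub G$ realising the rank — concretely, a subspace order-isomorphic to the ordinal $\omega^{n-1}+1$ with its order topology, which embeds as a closed subspace because the rank is $n$ — and iterate the extension classes coming from each accumulation step: at each level the "skyscraper at the limit point versus sum of skyscrapers at the isolated points below it" short exact sequence is non-split, and these splice to a non-trivial length-$(n-1)$ extension. The infinite-rank case then follows formally: if $\sub G$ contains closed subspaces of rank $n$ for all $n$ (which is what infinite Cantor--Bendixson rank gives, since the derived sequence never terminates), the above produces non-vanishing $\Ext^{n-1}$ for every $n$, so the homological dimension is infinite.

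The main obstacle I expect is bookkeeping the interaction between the $G$-equivariance/Weyl condition and the Cantor--Bendixson stratification: one needs that the derivative subspaces $X_i$ are $G$-stable (clear, as CB-derivative is a topological invariant and $G$ acts by homeomorphisms), that the restriction and closed pushforward functors along these $G$-stable strata are compatible with the equivariant sheaf and Weyl structures, and — most delicately — that "being $K$-fixed on stalks" is preserved under the resolution steps so that the inductive injective resolution actually lives in $\mcA(G)$ rather than merely in all $G$-sheaves. I would handle this by working with the equivalence of \cite{BSmackey}/\cite{BSsheaves} to translate, when convenient, to rational $G$-Mackey functors, where the analogous filtration is by (the conjugacy type of) subgroups and the inductive step is the familiar one of peeling off the "top" layer of a Mackey functor; the scattered-space hypothesis is exactly what makes this peeling terminate after $n$ steps. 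Verifying that the two filtrations match up under the equivalence, and that injectivity is detected the same way on both sides, is where the real work lies; everything else is a standard dimension-shifting argument.
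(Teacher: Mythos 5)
Your upper bound is essentially the paper's argument in different clothing: the paper builds the equivariant Godement resolution $I^\bullet(E)$, with $I^0(E)$ a product of equivariant skyscrapers over all $G$-orbits, and shows the stalk $I^k(E)_x$ vanishes unless $\height(x)\geq k$, which for a scattered space of rank $n$ truncates the resolution at length $n-1$; your stratification induction peeling off the discrete open part is the same mechanism. Two small cautions there: the functor you apply on the open discrete stratum must be $j_*$ (right adjoint to the exact restriction), not extension by zero, or injectivity is lost; and the statement that skyscrapers are injective is not formal --- it rests on the fact that \emph{every} discrete $\bQ[\stab_G(x)]$-module is injective (Castellano--Weigel, \cite[Proposition 3.1]{CW16}), which the paper cites explicitly and you only gesture at. You also correctly flag that the resolution must stay inside the Weyl subcategory; the paper disposes of this by observing that all sheaves appearing are stalkwise fixed.

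The genuine gap is in your lower bound. You propose to embed a closed chain of type $\omega^{n-1}+1$, take the non-split short exact sequence ``skyscraper at the limit point versus skyscrapers at the points below it'' at each accumulation step, and assert that ``these splice to a non-trivial length-$(n-1)$ extension.'' Splicing non-split short exact sequences does \emph{not} in general yield a non-zero class in $\Ext^{n-1}$: the Yoneda product of non-zero classes can vanish, so non-splitness at each stage proves nothing about the composite. This is exactly where the content of the theorem lies, and the paper does real work here: it resolves the constant sheaf $\const\bQ$, computes
\[
\Hom\Bigl(G\underset{\stab_G(x)}{\times}\bQ,\; I^0(E)\Bigr)\cong E_x^{\stab_G(x)},
\]
so that the $\Ext$ groups become the cohomology of the complex of $\stab_G(x)$-fixed stalks of the cokernels, and then proves (Proposition \ref{prop:nonzerosections}) by an explicit alternating-parity construction of sections that the last differential is not surjective at a point $x$ of height $n-1$. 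Without either reproducing that cochain-level computation or supplying an independent argument that your spliced class survives (e.g.\ a non-vanishing connecting homomorphism at every stage, computed against a specific test object), your lower bound, and hence the infinite-rank case that depends on it, is unproved. The concluding suggestion to transport the problem to Mackey functors and ``peel off the top layer'' does not close this gap, since the equivalence of \cite{BSmackey} is exact and merely relocates the same $\Ext$ computation.
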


Using the equivalence between rational $G$-Mackey functors 
and rational Weyl-$G$-sheaves, \cite[Theorem A]{BSmackey}, 
this result gives the homological dimension of categories
of rational $G$-Mackey functors. 

\begin{xxcor}
Let $G$ be a profinite group whose space of subgroups $\sub G$ is scattered of Cantor--Bendixson rank $n$.
The homological dimension of the category of rational $G$-Mackey functors is $n-1$. 

If $\sub G$ has infinite Cantor--Bendixson rank, then the homological dimension 
of the category of rational $G$-Mackey functors is infinite.
\end{xxcor}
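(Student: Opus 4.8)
The plan is to derive this as an immediate consequence of Corollary~\ref{cor:injectivedimmain}, using the comparison between rational $G$-Mackey functors and rational Weyl-$G$-sheaves established in \cite[Theorem A]{BSmackey}. The key point is that homological dimension is an invariant of an abelian category up to equivalence, so it suffices to transport the computation for $\mcA(G)$ across that equivalence.

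First I would recall that \cite[Theorem A]{BSmackey} gives an equivalence of abelian categories between $\mackey{G}$, the category of rational $G$-Mackey functors, and $\mcA(G)$, the category of rational Weyl-$G$-sheaves over $\sub G$. Being an equivalence of abelian categories, it is exact in both directions; in particular it preserves injectivity of objects and sends injective resolutions to injective resolutions, and it induces isomorphisms on all $\Ext$-groups. Consequently the homological (injective) dimension of $\mackey{G}$ — the supremum over objects $M$ of the length of a minimal injective resolution of $M$, equivalently the largest $d$ with $\Ext^d(-,-)$ not identically zero — agrees with that of $\mcA(G)$.

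Then I would invoke Corollary~\ref{cor:injectivedimmain}: when $\sub G$ is scattered of Cantor--Bendixson rank $n$, the homological dimension of $\mcA(G)$ equals $n-1$, and when $\sub G$ has infinite Cantor--Bendixson rank the homological dimension of $\mcA(G)$ is infinite. Combining this with the previous paragraph yields the stated values for $\mackey{G}$, including the infinite-rank case.

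There is no real obstacle here: the entire content has already been carried out in proving Corollary~\ref{cor:injectivedimmain} and \cite[Theorem A]{BSmackey}. The only points requiring a word of care are that the equivalence of \cite{BSmackey} is genuinely an equivalence of abelian categories (so that it is exact and preserves injectives), and that "homological dimension" is used with the same meaning on both sides; both are immediate from the definitions and the cited results.
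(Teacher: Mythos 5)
Your proposal matches the paper's argument exactly: the paper deduces this corollary from Corollary \ref{cor:injectivedimmain} by transporting the homological dimension across the exact equivalence of abelian categories between rational $G$-Mackey functors and rational Weyl-$G$-sheaves given in \cite[Theorem A]{BSmackey} (restated as Theorem \ref{thm:equivalencemain}). Your additional remarks about exactness, preservation of injectives, and invariance of $\Ext$-groups are precisely the justification the paper leaves implicit.
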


As well as the two cases in the previous results there is a third possibility,
that $\sub G$ is of finite rank, but not scattered. We make the 
following conjecture for this case, which occurs as Conjecture \ref{conj:hull} in the main body.

\begin{xxconj}
Let $G$ be a profinite group. If the $G$-space $X$ has finite Cantor--Bendixson rank and non-empty perfect hull, then the homological dimension 
of rational $G$-sheaves over $X$ is infinite.

If $X=\sub G$, then the homological dimension 
of the category of rational Weyl-$G$-sheaves is infinite.
\end{xxconj}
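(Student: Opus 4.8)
The plan is a two-stage reduction followed by a homological estimate, with the estimate being the genuine difficulty.

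\emph{Stage 1: pass to the perfect hull.} Write $P \subseteq X$ for the perfect hull, the intersection of the transfinite Cantor--Bendixson derivatives. It is closed, and since the Cantor--Bendixson derivative is natural it is $G$-invariant, so the inclusion $i \colon P \hookrightarrow X$ is a closed $G$-equivariant embedding. For such an embedding $(i_* F)_x = F_x$ for $x \in P$ and $0$ otherwise, so $i_*$ is exact; it is fully faithful ($i^* i_* = \id$), and since it has the exact left adjoint $i^*$ it preserves injectives. Hence $R\Hom_X(i_* A, i_* B) \simeq R\Hom_P(A,B)$, so the homological dimension of rational $G$-sheaves over $X$ is at least that over $P$. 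All of this restricts verbatim to Weyl-sheaves, since the stalkwise fixed-point condition is local and is preserved by $i_*$ and $i^*$; when $X = \sub G$ this identifies the Weyl-$G$-sheaves on $\sub G$ supported on $P$ with the Weyl-$G$-sheaves on $P$. So it suffices to show that a non-empty perfect profinite ($G$-)space has infinite homological dimension.

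\emph{Stage 2: the metrizable, non-equivariant case.} Assume $P$ is metrizable and forget the group action. A non-empty compact metrizable perfect space contains a copy of the Cantor set, hence a closed subspace homeomorphic to the ordinal $\omega^{\,n}+1$ for every $n$; this ordinal is scattered of Cantor--Bendixson rank $n+1$, so by the finite-rank computation (the non-equivariant analogue of Corollary \ref{cor:injectivedimmain}) the homological dimension of its sheaf category is $n$. Applying the Stage-1 reduction to these closed subspaces shows the homological dimension over $P$ is at least $n$ for all $n$, hence infinite; this settles the conjecture for the category of rational sheaves (no action) over a metrizable $X$.

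\emph{The main obstacle.} The difficulty is the general situation, with two faces. First, for an arbitrary profinite $G$ the space $\sub G$ need not be metrizable, so ``contains a Cantor set'' is unavailable. Second, and more seriously, the copies of $\omega^{\,n}+1$ above are not $G$-invariant, so the argument never reaches the Weyl-sheaf category: one would need closed $G$-invariant subspaces of $\sub G$ of arbitrarily large finite Cantor--Bendixson rank, which in general do not exist. The natural substitute is to iterate the recollement estimate underlying Corollary \ref{cor:injectivedimmain}: for an open $U \subseteq P$ with closed complement $Z$ one has lower bounds for the homological dimension of sheaves on $P$ in terms of those on $U$ and on $Z$, with an extra ``$+1$'' whenever a top Ext-class on $U$ splices with one on $Z$. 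Because $P$ is perfect, every open piece of $P$ again has non-empty perfect hull, so the process never terminates; converting this non-termination into a proof amounts to showing that a finite homological dimension $d$ would be forced to satisfy $d \ge d+1$. This requires controlling \emph{every} injective resolution of a suitable test sheaf, not merely one explicit Godement-type resolution, while respecting the stalkwise fixed-point constraints defining Weyl-sheaves --- or, equivalently, running the argument for rational $G$-Mackey functors through the equivalence of \cite[Theorem A]{BSmackey}. It is this self-referential bookkeeping that we have not been able to carry out, which is why the statement remains conjectural.
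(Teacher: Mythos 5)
The statement you are addressing is Conjecture \ref{conj:hull}: the authors explicitly leave it open, so there is no proof in the paper to compare against, and your proposal --- which candidly stops short of a proof --- is in the same position. That said, your two reduction stages are sound and worth separating from the genuinely open part. Stage 1 is correct: the perfect hull $P$ is a closed $G$-invariant subspace, pushforward along a closed equivariant embedding is exact and fully faithful with exact left adjoint $i^*$, hence preserves injectives and induces isomorphisms $\Ext^k_P(A,B)\cong\Ext^k_X(i_*A,i_*B)$; this reduces the conjecture to non-empty perfect spaces, and it is compatible with the stalkwise fixed-point condition defining Weyl-sheaves. Stage 2 is also correct and in fact proves something the paper does not claim: for metrizable $X$ with no group action, a non-empty perfect compact totally disconnected metrizable space is a Cantor set, contains closed copies of $\omega^n+1$ (scattered of Cantor--Bendixson rank $n+1$, hence of homological dimension $n$ by the non-equivariant analogue of Corollary \ref{cor:injectivedimmain}), and pushing the resulting non-zero $\Ext^n$ forward forces infinite homological dimension. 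That settles the second-countable, non-equivariant case of \cite[Conjecture 4.6]{sugrue19}.

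The obstacle you name is the real one, with exactly the two faces you describe. Without metrizability the ``find a large scattered closed subspace'' strategy is unavailable: a perfect Stone space such as $\beta\bN\setminus\bN$ has no non-trivial convergent sequences and its scattered closed subspaces are finite, so no closed copy of even $\omega+1$ exists. And even when $\sub G$ is metrizable, the copies of $\omega^n+1$ are not $G$-invariant, so nothing is deduced for Weyl-$G$-sheaves, which is the case the paper cares about. Your proposed substitute --- iterating a recollement lower bound and deriving $d\ge d+1$ from perfectness --- is the natural attack, but it needs a lower bound valid for \emph{every} injective resolution of a test object, whereas the paper's technique (Proposition \ref{prop:nonzerosections}) only controls the one explicit Godement resolution; that is precisely the missing idea. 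My advice: present Stages 1--2 as a genuine partial result in the non-equivariant, second-countable setting, and do not present the whole as a proof of Conjecture \ref{conj:hull}.
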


\subsection{Future questions}
The question of how the change of groups functors on spectra compare with 
functors relating algebraic models for varying $G$ is surprisingly involved. 
The currently known cases are for (co)free equivariant spectra, see 
Williamson \cite{Williamson22}. 
For the case of (pro)finite groups, the authors expect that having the 
sheaf description and the Mackey functor description will be vital.

The Quillen equivalences as given are not monoidal. 
There are two sources of difficulty, firstly that the Quillen equivalences of the 
tilting theory of Schwede and Shipley are not monoidal. 
Resolving this would require a fundamentally different approach to 
the classification. 
Secondly, it is not known how the equivalence of \cite{BSmackey} 
interacts with the tensor product of sheaves 
and the two known monoidal structures on $G$-Mackey functors: 
the box product and the equivariant tensor product of 
Hill and Mazur, \cite{HMequivarianttensor}.

A further question is whether one can construct an Adams spectral sequence
which takes values in the abelian category. The difficulty here is 
expected to be around constructing a suitable set of 
geometric fixed point functors for the closed subgroups of $G$,
which should be the topological equivalent of taking the stalk 
of an equivariant sheaf and hence detect equivalences in the homotopy category of rational $G$-spectra.  
The geometric fixed popint functors and the Adams spectral sequence are needed to 
give a good set of examples of rational $G$-spectra and their image in the 
algebraic model.

\subsection{Strategy of the classification}
The following diagram gives the major steps in the classification of 
rational $G$-spectra, for profinite $G$.
The Tilting Theorem (Morita theory) of 
Schwede and Shipley \cite[Theorem 5.1]{ss03stabmodcat} is used in 
Subsection \ref{subsec:tilting} to create a 
Quillen equivalence between rational $G$-spectra and a category of (chain complexes of) 
``topological Mackey functors''. This gives the upper horizontal functor.
The key input to apply the Tilting Theorem is Theorem \ref{thm:homotopydegreezero}, 
which proves that the homotopical information of rational $G$-spectra
to be concentrated in degree zero.

In Section \ref{sec:spans} we study spans and the stable orbit category.
The aim is to prove Theorem \ref{thm:spantoorbit}, which gives an equivalence 
between $\piorbcat$, the $G$-equivariant stable orbit category
and $\spanunstableorbcat$, a category of spans of finite discrete $G$-sets. 
That equivalence provides the upper vertical functor of the diagram. 
The lower vertical functor is an equivalence of categories 
describing Mackey functors in terms of spans, as detailed in 
Subsection \ref{subsec:mackey}. 
Theorem \ref{thm:gpsectramackey} is where these results are combined to 
give the zig-zag of Quillen equivalences between rational $G$-spectra and 
chain complexes of rational $G$-Mackey functors.

The lower horizontal functor is an equivalence by earlier work of the authors, 
\cite[Theorem A]{BSmackey}, which proves that 
the category of rational $G$-Mackey functors 
is equivalent to the category of rational Weyl-$G$-sheaves over the 
space of closed subgroups of $G$. 

\begin{align*}\label{maindiagram}
\xymatrix{
\text{Rational $G$-spectra}
\ar@{<->}[r]^-{\simeq} & 
\Ch \left(\func_{\abgps}\left(\piratorbcat,\bQ \leftmod \right)\right)
\ar@{<->}[d]^-{\cong} 
\\
& \Ch\left(\func_{\abgps}\left(\spanunstableorbcat ,\bQ \leftmod\right) \right) 
\ar@{<->}[d]^-{\cong}
\\
\Ch \left(\weylsheaf{G}\right)
& 
\Ch \left(\mackey{G}\right)
\ar@{<->}[l]_-{\cong} 
}
\end{align*}

In Section \ref{sec:injdim} we look at the homological dimension of the algebraic model
and relate it to the Cantor-Bendixson rank of $\sub G$.

\section{Basics on equivariant spectra for profinite \texorpdfstring{$G$}{G}}
We recap the construction of the model category of rational orthogonal $G$-spectra, for $G$ a profinite group. 
We then give various properties of this homotopy theory that will be used in the classification. 
Along the way we will need some facts about profinite groups and sets and topological spaces with a profinite group action.
The constructions will all be generalisations of the finite group case. 
The expert reader may like to skip to Section \ref{sec:spans}.

\subsection{Profinite groups}

We give a few reminders of useful facts on profinite groups.
More details can be found in Wilson \cite{wilson98} or Ribes and Zalesskii \cite{rz00}.

A \emph{profinite group} is a compact, Hausdorff, totally disconnected
topological group.
A profinite group $G$ is homeomorphic to the inverse limit of its finite quotients:
\[
G \cong \underset{N \opennormalsub G}{\lim}  G/N \subseteq \underset{N \opennormalsub G}{\prod} G/N .
\]
The limit has the canonical topology which either be described as the subspace topology on the product
or as the topology generated by the preimages of the open sets in
$G/N$ under the projection map $G \to G/N$, as $N$ runs over the open normal subgroups of $G$.

Closed subgroups and quotients by closed subgroups of profinite groups are also profinite.
A subgroup of a profinite group is open if and only if it is finite index and closed.
The trivial subgroup $\{ e\}$ is open if and only
if the group is finite.
The intersection of all open normal subgroups is $\{ e \}$.
Any open subgroup $H$ contains an open normal subgroup, the \emph{core} of $H$ in $G$,
which is defined as the finite intersection
\[
\core_G (H) = \bigcap_{g \in G} g H g^{-1}.
\]

\subsection{Equivariant orthogonal spectra}
Recall that an orthogonal spectrum is a sequence of based spaces indexed by 
finite dimensional inner product spaces, related by suspension maps that are suitably compatible
with linear isometries of those vector spaces. For details, see Mandell, May Schwede and Shipley \cite{mmss01} 
or work of the first author with Roitzheim \cite{brfoundations}. 

Equivariantly, the picture is similar, starting from based spaces with $G$-action indexed
by finite dimensional $G$-inner product spaces, with suspension maps that are compatible
with both the linear isometries and $G$-actions. 
This construction was first given in work of Mandell and May \cite{mm02} and adapted 
to the profinite setting by Fausk \cite{fausk}.

The starting point is to describe the model category of based topological $G$-spaces
that we will use to create our $G$-spectra. 
We focus on those model structures built using the \emph{open} subgroups $H$ of $G$
as then $G/H$ is a finite set. This ensures that the forgetful functor from
$G$-spaces to spaces is a left Quillen functor. Throughout this section $G$ 
will be a profinite group. 

\begin{proposition}
There is a cofibrantly generated proper model structure on the category
of based topological $G$-spaces with weak equivalences those maps $f$ such that $f^H$ is a
weak equivalence of spaces, for all open subgroups $H$ of $G$. 
Similarly, fibrations are those maps $f$ such that $f^H$ is a fibration of based spaces 
for all open subgroups $H$ of $G$. 
This model structure is denoted $\eqtop{G}$.

The generating cofibrations are the standard inclusions 
\begin{align*}
G/H_+ \smashprod S^{n-1}_+  &\lra G/H_+ \smashprod D^n_+ \\
G/H_+ \smashprod D^n_+ &\lra G/H_+ \smashprod (D^n \times [0,1])_+
\end{align*}
for $H$ an open subgroup of $G$ and $n \geqslant 0$. 
\end{proposition}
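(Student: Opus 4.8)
The plan is to establish the model structure on $\eqtop{G}$ by transferring the standard model structure on based topological spaces along the adjunction given by the product over orbits, following the classical strategy of Mandell--May \cite{mm02} but being careful that only \emph{open} subgroups are used so that all orbits $G/H$ are finite discrete sets. Concretely, I would consider the set of objects $\{G/H_+ : H \opensub G\}$ and the adjoint pair
\[
\xymatrix{
\prod_{H} \mathrm{Top}_* \ar@<0.7ex>[r] & \ar@<0.7ex>[l] \eqtop{G}
}
\]
whose right adjoint sends a based $G$-space $X$ to the tuple of fixed-point spaces $(X^H)_H$ and whose left adjoint is built from smashing with the orbits $G/H_+$. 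The weak equivalences and fibrations in $\eqtop{G}$ are by definition created by this right adjoint, so the content is to verify Kan's transfer criterion.

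\textbf{Key steps.} First I would record that the category of based topological $G$-spaces is complete, cocomplete, and that the forgetful functor to based spaces preserves filtered colimits and pushouts along the generating cofibrations (this uses that $G/H$ is a finite discrete space for open $H$, so $G/H_+ \smashprod (-)$ is just a finite wedge after forgetting the action, hence preserves the relevant colimits). Second, I would identify the two candidate generating sets: the generating cofibrations $I$ as listed in the statement, namely $G/H_+ \smashprod S^{n-1}_+ \to G/H_+ \smashprod D^n_+$ together with the mapping-cylinder inclusions $G/H_+ \smashprod D^n_+ \to G/H_+ \smashprod (D^n \times [0,1])_+$, and the generating acyclic cofibrations $J$ obtained analogously from the standard acyclic cofibrations $D^n_+ \to (D^n\times[0,1])_+$ of spaces, smashed with $G/H_+$ over all open $H$. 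Third, I would check the small object argument applies: every object of $\eqtop{G}$ is small relative to the relevant cellular maps because the underlying spaces are, and $G/H_+\smashprod(-)$ commutes with the transfinite composites involved. Fourth, I would verify the acyclicity condition: $J$-cell complexes are weak equivalences, which reduces via the fixed-point-preservation property of $(-)^H$ on the relevant pushouts (again using openness of $H$, so that $(G/K_+ \smashprod A)^H$ decomposes as a wedge indexed by the finite set $(G/K)^H$) to the corresponding statement for based spaces. This gives the cofibrantly generated model structure with the stated generating cofibrations.

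\textbf{Properness.} For left properness I would use that every object is cofibrant-ish enough — more precisely, that cofibrations in $\eqtop{G}$ have underlying cofibrations of spaces (since $G/H_+\smashprod(-)$ sends cofibrations to cofibrations and every generating cofibration has the stated form), so a pushout of a weak equivalence along a cofibration is, after applying $(-)^H$, a pushout of a weak equivalence along a cofibration of spaces, hence a weak equivalence; then one concludes $H$-by-$H$. Right properness is dual and easier: pullbacks of fibrations are computed levelwise and $(-)^H$ is a right adjoint, so it commutes with pullbacks, reducing to right properness of spaces.

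\textbf{Main obstacle.} The step I expect to require the most care is the acyclicity condition and, relatedly, the identification of the fibrations with the stated $H$-fixed-point description. The subtlety is that $(G/K)^H$ for open subgroups $H, K$ is the (finite) set of cosets $gK$ with $H \leq gKg^{-1}$, and one must check this behaves correctly under the colimits appearing in $J$-cell complexes so that $(-)^H$ commutes with them — this is where profiniteness enters only through the fact that open subgroups have finite index, so $G/K$ is genuinely a finite discrete $G$-set and there is no topology to track on the orbit. Once that combinatorial bookkeeping is in place, everything else is a routine transfer argument parallel to the finite-group case of Mandell--May, and I would cite Fausk \cite{fausk} for the profinite adaptation where convenient rather than reproving it in full.
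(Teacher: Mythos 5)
The paper does not actually prove this proposition: it is stated as standard, with the surrounding text deferring to Mandell--May \cite{mm02} and Fausk \cite{fausk} (and \cite{barnesthesis}) for the constructions. Your proposal supplies the expected argument, and it is essentially the standard one those references use: transfer along the fixed-points/orbit-smash adjunction, with the acyclicity and smallness checks reducing to non-equivariant statements because $(G/K)^H$ is a finite discrete set for $H,K$ open, so $(-)^H$ commutes with the pushouts and transfinite composites appearing in the cell-complex arguments. Your identification of this commutation as the one point where openness (finite index) genuinely enters is exactly right, and your treatment of properness is fine (right properness is in fact immediate since every object is fibrant). The only cosmetic remark is that the second family of maps in the statement, $G/H_+ \smashprod D^n_+ \to G/H_+ \smashprod (D^n \times [0,1])_+$, is the set of generating \emph{acyclic} cofibrations smashed with orbits; listing them among the generating cofibrations as the paper does is harmless but redundant, and your proof correctly uses them in the role of $J$.
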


Just as CW-complexes are built from iteratively attaching cells 
by taking the pushout over the inclusion $S^{n-1} \to D^n$, one can define
$G$-CW complexes using the inclusions 
\[
G/H \times S^{n-1} \lra G/H \times D^n.
\]
That is, take $X_0$ to be a disjoint union of copies of $G/H_+$, 
then attach cells of the above form with $n=1$ and $H$ varying to obtain $X_1$. 
Continuing inductively gives $X_n$ and $X$ is defined as the union of the $X_n$. 
We see that if $X$ is built using finitely many cells, the stabiliser of $X$ 
(the intersection of all open subgroups $H$ used in the cells) is also open. 
The evident pointed analogue gives the definition of pointed $G$-CW-complexes.
Since the spaces $G/H$ are finite, we see that every $G$-CW complex 
is indeed a CW-complex, after forgetting the group action. 
Note that our choices mean that $G$ itself is not a $G$-CW complex.
Indeed, as it has no fixed points, the space $G$ is weakly equivalent to the empty set.

Our category of $G$-spectra will be indexed by the finite dimensional sub-$G$-inner product spaces 
of a complete $G$-universe $\mcU$, as defined below.

\begin{definition}
A \emph{$G$-universe} $\mcU$ is a countably infinite direct sum $\mcU= \oplus_{i=1}^\infty U$
of real a $G$-inner product space $U$, such that:
\begin{enumerate}
\item there is a canonical choice of trivial representation $\bR \subset U$, 
\item $\mcU$ is topologised as the union of all finite dimensional 
$G$-subspaces of $U$ (each equipped with the norm topology).
\end{enumerate}
A $G$-universe is said to be \emph{complete} if every finite dimensional irreducible representation
is contained (up to isomorphism) within $\mcU$. 
\end{definition}

A complete $G$-universe always exists, one can be obtained by setting $U$ to be direct sum of
a representative of each isomorphism class of irreducible representations of $G$,
then defining $\mcU$ to be the direct sum of countably many copies of $U$.

\begin{remark}\label{rmk:vectorfixed}
Since a profinite group $G$ is compact and Hausdorff, the action of $G$ of a finite dimensional $G$-inner product space 
factors through a Lie group quotient by \cite[Lemma A.1]{fausk}. The only such quotients of a profinite group are 
finite, hence if $V$ is a finite dimensional $G$-inner product space, there is an open normal subgroup $N$ of $G$
such that $V^N=V$.  

In particular, the one-point compactification of $V$, denoted $S^V$ is fixed by some open normal subgroup $N$.
Thus, $S^V$ can be given the structure of a finite (pointed) $G/N$-CW-complex, by Illman \cite{illman}, 
and hence is a finite (pointed) $G$-CW-complex.  
\end{remark}

For brevity we define equivariant orthogonal spectra in terms of enriched functors
from a particular enriched category made using Thom spaces. 
Recall that $\eqtop{G}$ is enriched over itself, via the space of (not-necessarily equivariant)
maps, where $G$ acts by conjugation
\[
\big(f \colon X \lra Y \big) \mapsto \big( g_Y \circ f \circ g^{-1}_X \colon X \lra Y \big).
\]

\begin{definition}
Define an \emph{indexing space} to be a finite dimensional sub $G$-inner product space of $\mcU$.
We define $\mcL$ to be the category of all real $G$-inner product spaces that are isomorphic to indexing
G-spaces in $\mcU$ and morphisms the (not-necessarily equivariant) linear isometries. 
\end{definition}

\begin{definition}
For each $V \subseteq W$ there is a vector bundle (a subset of the product bundle)
\[
\gamma(V,W) = \{ (f,x) \subseteq \mcL(V,W) \times W  \mid x \in W-f(V) \}
\]
over $\mcL(V,W)$, where $W-f(V)$ is the orthogonal complement of $f(V)$ in $W$. 

Let $J(V,W)$ be the Thom space of $\gamma(V,W)$, with $G$-action given by 
$g(f,x)  = (gf g^{-1}, gx)$. 
\end{definition}

\begin{lemma}
For inclusions of indexing spaces $U \subseteq V \subseteq W$ the $G$-equivariant map
\[
\begin{array}{rcl}
\gamma(V,W) \times \gamma(U,V) & \lra & \gamma(U,W) \\
( (f,x), (k,y) ) & \longmapsto & (f \circ k, x+f(y) )
\end{array}
\]
induces a composition for the $\eqtop{G}$-enriched category $\mcJ$ whose objects are the objects of $\mcL$
and morphisms $G$-spaces are given by $\mcJ(V,W)$.
\end{lemma}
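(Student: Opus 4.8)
The plan is to show that the stated formula is a morphism of vector bundles covering the composition map in $\mcL$, to push it forward to a based $G$-map of Thom spaces using the standard Thom space functoriality, and then to read off associativity and unitality from a direct computation on the bundle data. First I would check well-definedness and that the map is fibrewise an isomorphism. If $(f,x)\in\gamma(V,W)$ and $(k,y)\in\gamma(U,V)$, then $x\perp f(V)$ and $y\perp k(U)$; since $f$ is a linear isometry, $f(y)\in f(V)$ is orthogonal to $(f\circ k)(U)$, while $x$ is orthogonal to $f(V)\supseteq(f\circ k)(U)$, so $x+f(y)\in W-(f\circ k)(U)$. On each fibre the assignment $(x,y)\mapsto x+f(y)$ is linear and, because $\langle x,f(y)\rangle=0$, satisfies $\|x+f(y)\|^2=\|x\|^2+\|y\|^2$; as $(\dim W-\dim V)+(\dim V-\dim U)=\dim W-\dim U$, it is a linear isometric isomorphism of fibres. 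The formula is jointly continuous in $(f,x,k,y)$ and, since $g(f\circ k)g^{-1}=(gfg^{-1})\circ(gkg^{-1})$ and $g(x+f(y))=gx+(gfg^{-1})(gy)$, it is $G$-equivariant for the diagonal conjugation action, covering the (continuous, $G$-equivariant) composition $\mcL(V,W)\times\mcL(U,V)\to\mcL(U,W)$, $(f,k)\mapsto f\circ k$.

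Next I would pass to Thom spaces. The external product bundle $\gamma(V,W)\times\gamma(U,V)$ over $\mcL(V,W)\times\mcL(U,V)$ has Thom space $\mcJ(V,W)\sm\mcJ(U,V)$, and a fibrewise linear isometric embedding of real vector bundles covering a continuous map of base spaces induces a based continuous map of Thom spaces; here the bundle map is even a fibrewise isomorphism. Equivariance of the bundle map yields equivariance of the induced map, so we obtain a based $G$-map
\[
\mcJ(V,W)\sm\mcJ(U,V)\lra\mcJ(U,W).
\]
Here one should note that each $\mcL(V,W)$ is a compact (Stiefel-type) space and that, by Remark \ref{rmk:vectorfixed}, $V$ and $W$ are fixed by open normal subgroups of $G$, so the conjugation actions on all the spaces involved factor through a common finite quotient $G/N$; the entire discussion therefore takes place among based $G/N$-spaces, where the Thom space functoriality of Mandell--May applies without change.

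Finally I would verify the enriched category axioms at the level of bundle data, which suffices since the Thom space maps are induced functorially. Associativity: for $T\subseteq U\subseteq V\subseteq W$ both bracketings send $(((f,x),(k,y)),(l,z))$ to $(f\circ k\circ l,\ x+f(y)+(f\circ k)(z))$, using $f(y+k(z))=f(y)+(f\circ k)(z)$. Unitality: the point $(\id_V,0)\in\gamma(V,V)$ is $G$-fixed and defines a based $G$-map $S^0\to\mcJ(V,V)$; composing it on the right with $(k,y)\in\gamma(U,V)$ gives $(\id_V\circ k,\ 0+\id_V(y))=(k,y)$, and on the left with $(f,x)\in\gamma(V,W)$ gives $(f\circ\id_V,\ x+f(0))=(f,x)$. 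These identities, being identities of bundle data, pass to the induced maps of Thom spaces and give exactly the associativity and unit diagrams for an $\eqtop{G}$-enriched category.

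The one step that requires genuine care rather than routine checking is the second: identifying the Thom space of the external product bundle with the smash product $\mcJ(V,W)\sm\mcJ(U,V)$ and confirming that a fibrewise isometric bundle map induces a well-behaved based map of Thom spaces. Once one records that the base spaces are compact and that everything descends to a finite quotient group, this is standard, but it is the point where the point-set input is actually used.
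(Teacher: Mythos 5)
Your proof is correct. The paper states this lemma without proof, treating it as the standard construction of the Thom--space category from Mandell--May, and your verification (well-definedness and fibrewise isometric isomorphism of the bundle map, equivariance for the conjugation action, passage to Thom spaces via properness over the compact bases $\mcL(V,W)$, and the direct check of associativity and unitality on bundle data) is exactly the argument the paper is implicitly relying on.
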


\begin{definition}
An \emph{orthogonal $G$-spectrum} $X$ on a universe $\mcU$ is an $\eqtop{G}$-enriched functor 
from $\mcJ$ to $\eqtop{G}$. A map of orthogonal $G$-spectra is a $\eqtop{G}$-enriched natural transformation.
The category of orthogonal $G$-spectra is denoted $\GspO$. 
\end{definition}

In particular, an orthogonal $G$-spectrum $X$ 
defines based $G$-spaces $X(V)$ for each indexing space $V \subset \mcU$ and based $G$-maps
\[
\sigma_{V,W} \colon S^{W-V} \smashprod X(V) \lra X(W)
\]
for each $V \subseteq W$. 
A map $f \co X \to Y$ of orthogonal $G$-spectra defines a set of 
$G$-maps $f \co X(V) \to Y(V)$, which commute with the maps $\sigma_{V,W}$
for each $V \subseteq W$. 

\subsection{Model categories of spectra}
With the category of $G$-spectra defined, the next step is to give model structures. 
This follows the usual path, a levelwise model structure, then a stable model structure and finally 
a rational model structure (as a Bousfield localisation of the stable model structure). 
In each case, the weak equivalences are defined using the \emph{open} subgroups of $G$. 
For brevity, we state the existence of the rational model structure as a theorem, 
giving the essential properties afterwards. These results are standard, 
details can be found in \cite[Section 2.2]{barnesthesis}.

\begin{definition}
Let $X$ be a $G$-spectrum and $n$ a non-negative integer. We define the $H$-\emph{homotopy groups}
of $X$ as 
\begin{align*}
\pi_n^H(X)  & = \colim_{V} \pi_n^H(\Omega^V X(V)) \\
\pi_{-n}^H(X)  & = \colim_{V \supset \bR^n} \pi_0^H(\Omega^{V-\bR^n} X(V)).
\end{align*}
In the first case the colimit runs over the indexing spaces of $\mcU$, in the second case 
over the indexing spaces of $\mcU$ that contain $\bR^n$. 

A map $f \co X \to Y$ of orthogonal $G$-spectra is called a \emph{$\pi_*$-isomorphism} if 
$\pi_k^H(f)$ is an isomorphism for all open subgroups $H$ of $G$ and all integers $k$. 
We call $f$ a \emph{rational $\pi_*$-isomorphism} if 
$\pi_k^H(f) \otimes \bQ$ is an isomorphism for all open subgroups $H$ of $G$ and all integers $k$.
\end{definition}

The rational model structure on $G$-spectra is made using 
the rational sphere spectrum $S^0 \bQ$,  
see \cite[Definition 1.5.2]{barnesthesis} for a construction as an
equivariant Moore spectrum for $\bQ$. 

\begin{theorem}
There is a cofibrantly generated proper stable model structure on the category of orthogonal $G$-spectra
whose weak equivalences are the class of rational $\pi_*$-isomorphisms and whose cofibrations are 
the class of $q$-cofibrations. This model structure is called the \emph{rational model structure}
and we write $\QGspO$ for the model category of orthogonal $G$-spectra equipped with the rational model structure.
\end{theorem}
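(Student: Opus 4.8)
The plan is to construct the rational model structure as an iterated left Bousfield localisation, paralleling the classical three-step construction of \cite{mmss01} for non-equivariant orthogonal spectra and its equivariant refinements. First I would equip $\GspO$ with the \emph{level model structure}: a map $f$ is a level equivalence (resp.\ level fibration) if $f(V) \co X(V) \to Y(V)$ is a weak equivalence (resp.\ fibration) in $\eqtop{G}$ for every indexing space $V$. This model structure is obtained by right-inducing along the evaluation functors $X \mapsto X(V)$, so it is cofibrantly generated with generating (trivial) cofibrations $\mcJ(V,-) \smashprod i$ for $i$ a generating (trivial) cofibration of $\eqtop{G}$, and it inherits properness and the cellular (or combinatorial) hypotheses from $\eqtop{G}$. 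Its cofibrations are exactly the $q$-cofibrations.

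Next I would left Bousfield localise the level model structure at the set of maps $\mcJ(W,-)\smashprod S^{W-V} \to \mcJ(V,-)$ (for $V\subseteq W$, smashed with the domains and codomains of the generating cofibrations of $\eqtop{G}$) that encode the structure maps; this produces the \emph{stable model structure} on $\GspO$, whose fibrant objects are the $\Omega$-$G$-spectra and whose weak equivalences are precisely the $\pi_*$-isomorphisms defined above. I would then left Bousfield localise once more, at the rational sphere $S^0\bQ$ — concretely, at the set obtained by smashing the generating cofibrations with the cells of a rational Moore spectrum — so that the local equivalences become exactly the rational $\pi_*$-isomorphisms and the fibrant objects are the rational $\Omega$-$G$-spectra. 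The Bousfield localisation machinery for left proper cellular model categories (Hirschhorn) yields at each stage a cofibrantly generated, left proper model structure whose cofibrations are unchanged; hence, throughout, the $q$-cofibrations.

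It then remains to check right properness and stability of $\QGspO$. For right properness the key point is that a pullback of a rational $\pi_*$-isomorphism along a rational fibration is again a rational $\pi_*$-isomorphism: since the rationally fibrant objects are rational $\Omega$-$G$-spectra, a homotopy pullback square of $G$-spectra induces, for every open subgroup $H$, a long exact sequence on $\pi_*^H(-)\otimes\bQ$, and the claim follows exactly as in the integral stable case. Stability — that $S^1\smashprod-$ is a Quillen equivalence — is inherited because left Bousfield localisation of a stable model category at a set of maps closed under suspension and desuspension remains stable, and the rationalising set has this property. I expect the main obstacle to be essentially bookkeeping: pinning down an explicit small set of maps whose local objects are precisely the rational $\Omega$-$G$-spectra, and then running the right-properness verification; once the fibrant objects are identified, the homotopy-group long exact sequences make everything routine. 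Since all of these arguments are formally identical to the finite-group case, one may alternatively simply invoke \cite[Section 2.2]{barnesthesis}.
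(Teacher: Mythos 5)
Your proposal is correct and follows essentially the same route as the paper: the paper likewise takes the (level and) stable model structure on $\GspO$ as given from the standard references and obtains $\QGspO$ as a left Bousfield localisation, localising at the suspensions and desuspensions of $S^0 \to S^0\bQ$ and citing a general theorem that localisation of a cofibrantly generated proper stable model structure at a set closed under desuspension preserves cofibrant generation, properness and stability — the properties you instead verify by hand via the long exact sequences. The only point worth tightening is the identification of the local equivalences with the rational $\pi_*$-isomorphisms, which the paper deduces from the natural isomorphism $\pi_*^H(X \smashprod S^0\bQ) \cong \pi_*^H(X)\otimes\bQ$ rather than from your (somewhat underspecified) localising set built from cells of the Moore spectrum.
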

\begin{proof}
The stable model structure on orthogonal $G$-spectra is cofibrantly generated, proper and stable, 
thus \cite[Theorem 7.2.17]{brfoundations} implies that 
any left Bousfield localisation of the stable model structure at a set of map closed under desuspension
is also cofibrantly generated, proper and stable. 

We construct the rational model structure by localising the stable model structure 
at the set of all suspensions and desuspensions of the map from the sphere spectrum 
to the rational sphere spectrum
\[
S^0 \longrightarrow S^0 \bQ.
\]
That this gives the correct weak equivalences is a consequence of Proposition \ref{prop:rationalhomotopygroups}.
\end{proof}

\begin{proposition}\label{prop:rationalhomotopygroups}
There is a natural isomorphism
\[
\pi_*^H (X \smashprod S^0 \bQ) \cong \pi_*^H (X) \otimes \bQ.
\]
A $G$-spectrum $X$ is fibrant in $\QGspO$ if and only if $X$ is an $\Omega$-spectrum 
and has rational homotopy groups.
\end{proposition}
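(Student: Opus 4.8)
The plan is to establish the two assertions in order, with the first feeding into the second.

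For the natural isomorphism, I would use that the rational sphere spectrum $S^0\bQ$ of \cite[Definition 1.5.2]{barnesthesis} is an equivariant Moore spectrum for $\bQ$, hence up to stable equivalence is the mapping telescope of the degree self-maps on the sphere,
\[
S^0\bQ \simeq \hocolim\left( S^0 \xrightarrow{\cdot 2} S^0 \xrightarrow{\cdot 3} S^0 \xrightarrow{\cdot 4} \cdots \right).
\]
Smashing with $X$ preserves this sequential homotopy colimit, so $X \wedge S^0\bQ$ is the telescope of $X \xrightarrow{\cdot 2} X \xrightarrow{\cdot 3} \cdots$; and since $\pi_*^H$ is itself a sequential colimit over indexing spaces it commutes with sequential homotopy colimits, while a mapping telescope contributes no $\lim^{1}$-term. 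Hence
\[
\pi_n^H(X \wedge S^0\bQ) \cong \colim\left( \pi_n^H(X) \xrightarrow{\cdot 2} \pi_n^H(X) \xrightarrow{\cdot 3} \cdots \right) \cong \pi_n^H(X) \otimes \bQ,
\]
naturally in $X$.

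For the fibrancy statement, recall that $\QGspO$ is a left Bousfield localisation of the stable model structure, so an object is $\QGspO$-fibrant if and only if it is fibrant in the stable model structure — equivalently an $\Omega$-spectrum, by \cite{brfoundations} — and local with respect to the localising maps. Using $\pi_n^H(Z) = [\Sigma^n\Sigma^\infty(G/H_+), Z]_G$, and recalling that the localising maps are the (de)suspensions of $\Sigma^\infty(G/H_+) \wedge (S^0 \to S^0\bQ)$ for $H$ open in $G$ (the orbits $G/H_+$ are what make the condition detect all open subgroups, not merely $H=G$), locality of an $\Omega$-spectrum $X$ unwinds to the requirement that, for every open $H$ and every $n \in \bZ$, the map induced by $S^0 \to S^0\bQ$,
\[
[\Sigma^n\Sigma^\infty(G/H_+) \wedge S^0\bQ,\, X]_G \lra [\Sigma^n\Sigma^\infty(G/H_+),\, X]_G = \pi_n^H(X),
\]
is an isomorphism.

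To identify these local objects, I would compute the left-hand group from the telescope: the Milnor exact sequence for maps out of a mapping telescope yields
\[
0 \to \lim^{1}_k\, \pi_{n+1}^H(X) \to [\Sigma^n\Sigma^\infty(G/H_+) \wedge S^0\bQ,\, X]_G \to \lim_k\, \pi_n^H(X) \to 0,
\]
with inverse systems $\cdots \xrightarrow{\cdot 3} \pi_*^H(X) \xrightarrow{\cdot 2} \pi_*^H(X)$. As $\bQ = \colim(\bZ \xrightarrow{\cdot 2} \bZ \xrightarrow{\cdot 3} \cdots)$, these terms are $\Ext^1_\bZ(\bQ, \pi_{n+1}^H(X))$ and $\Hom_\bZ(\bQ, \pi_n^H(X))$, and the displayed map vanishes on the $\Ext^1$-term and is the restriction map $\Hom_\bZ(\bQ, \pi_n^H(X)) \to \Hom_\bZ(\bZ, \pi_n^H(X)) = \pi_n^H(X)$ on the quotient. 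The elementary observation is that $\Hom_\bZ(\bQ, A)$ always carries a $\bQ$-module structure, so if restriction $\Hom_\bZ(\bQ, A) \to A$ is an isomorphism then $A$ is (isomorphic to, hence is) a $\bQ$-vector space; conversely a $\bQ$-vector space is divisible, hence an injective $\bZ$-module, so $\Ext^1_\bZ(\bQ, A) = 0$ and restriction is an isomorphism. Applying this for all $n$ and all open $H$, the locality condition holds if and only if every $\pi_n^H(X)$ is a $\bQ$-vector space, i.e.\ $X$ has rational homotopy groups; combined with the preceding paragraph this gives the fibrancy statement.

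I expect the main obstacle to be not the formal manipulations but two points of care. First, that the telescope model of $S^0\bQ$ and the commutation of $\pi_*^H$ with this homotopy colimit are genuinely valid in orthogonal $G$-spectra — that $-\wedge X$ is sufficiently homotopical, and that no $\lim^{1}$ secretly enters the ``if'' direction (it does not, a $\bQ$-vector space being injective over $\bZ$). Second, unwinding the localisation correctly so that the locality condition is the displayed one for \emph{every} open $H$, which is what forces the localising maps to be smashed with the orbit spectra $\Sigma^\infty(G/H_+)$. Once the locality condition is in the stated form, the remaining homological algebra with $\Hom_\bZ(\bQ,-)$ and $\Ext^1_\bZ(\bQ,-)$ is short.
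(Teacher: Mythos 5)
Your argument is correct, and it is essentially the standard one: the paper itself gives no proof of this proposition, deferring to \cite[Section 2.2]{barnesthesis}, and the telescope model of $S^0\bQ$ plus compactness of the orbits $\Sigma^\infty G/H_+$ (for the first claim) and the characterisation of fibrant objects in a left Bousfield localisation as local $\Omega$-spectra plus the $\Hom_{\bZ}(\bQ,-)/\Ext^1_{\bZ}(\bQ,-)$ computation (for the second) is exactly what that reference does. One point you raise is genuinely worth recording: the paper's proof of the surrounding theorem describes the localising set as the (de)suspensions of $S^0 \to S^0\bQ$ alone, and as you observe, locality against that set only constrains $\pi_*^G$; to force rationality of $\pi_*^H$ for \emph{all} open $H$ one must either take the localising maps smashed with the orbits $G/H_+$, as you do, or argue separately that a local object is a retract of its smash with $S^0\bQ$ (the localisation being smashing), whence all its equivariant homotopy groups are rational. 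Either patch is fine, but some such step is needed and your version supplies it.
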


The last statement gives a zig-zag of $\pi_*$-isomorphisms for any $G$-spectrum $X$:
\[
\hat{f}_\bQ X \lra \hat{f}_\bQ X \smashprod S^0 \bQ \longleftarrow X \smashprod S^0 \bQ.
\]
This shows that our localisation is a smashing localisation. 

\begin{corollary}\label{cor:rationalmapstensor}
If the $G$-spectrum $A$ is compact in the homotopy category of $\GspO$, then there is a natural isomorphism 
\[
[A,X]_\bQ \cong [A,X] \otimes \bQ.
\]
In particular, for $\hat{f}_\bQ X$ the fibrant replacement of $X$ in $\QGspO$, there is a
natural isomorphism
\[
\pi_*^H (\hat{f}_\bQ X) \cong \pi_*^H (X) \otimes \bQ.
\]
\end{corollary}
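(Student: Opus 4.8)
The plan is to deduce both isomorphisms from the smashing nature of the rational localisation, as captured by the zig-zag of $\pi_*$-isomorphisms
\[
\hat{f}_\bQ X \lra \hat{f}_\bQ X \smashprod S^0 \bQ \longleftarrow X \smashprod S^0 \bQ
\]
displayed just above the statement, together with Proposition \ref{prop:rationalhomotopygroups}. First I would establish the $[A,X]_\bQ$ isomorphism. Since $\QGspO$ is obtained from the stable model structure by a left Bousfield localisation, the rational maps $[A,X]_\bQ$ are by definition $[A, \hat{f}_\bQ X]$ computed in the stable homotopy category, where $\hat{f}_\bQ X$ is a rational fibrant replacement of $X$. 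Using the zig-zag above, $\hat{f}_\bQ X$ is $\pi_*$-isomorphic to $X \smashprod S^0 \bQ$, so $[A, \hat{f}_\bQ X] \cong [A, X \smashprod S^0 \bQ]$ in the stable homotopy category. Now I would invoke compactness of $A$: for a compact object, smashing with the filtered homotopy colimit defining $S^0 \bQ$ (the equivariant Moore spectrum for $\bQ$, built as a sequential homotopy colimit of copies of $S^0$ along degree maps) commutes with $[A,-]$, giving $[A, X \smashprod S^0 \bQ] \cong \colim [A, X] \cong [A,X] \otimes \bQ$, where the final colimit over the multiplication-by-$n$ system is exactly rationalisation of the abelian group $[A,X]$. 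Naturality in $X$ is clear since every step is natural.

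For the second isomorphism, I would apply the first with $A = G/H_+ \smashprod S^n$ for an open subgroup $H$ and integer $n$. These shifted suspension spectra are compact in $\Ho(\GspO)$ — this is where the restriction to open $H$, so that $G/H$ is finite, is essential, since then $G/H_+$ is a finite $G$-CW complex by the discussion preceding the definition of $G$-universes, and finite $G$-CW spectra are compact. Then $[G/H_+ \smashprod S^n, X] \cong \pi_n^H(X)$ by the adjunction defining equivariant homotopy groups, so the first part immediately yields $\pi_n^H(\hat{f}_\bQ X) \cong \pi_n^H(X) \otimes \bQ$ for all such $H$ and $n$. Alternatively, and perhaps more cleanly, I would observe that $\pi_*^H(\hat{f}_\bQ X) \cong \pi_*^H(X \smashprod S^0 \bQ)$ directly from the zig-zag (both legs are $\pi_*$-isomorphisms), and then the right-hand side is $\pi_*^H(X) \otimes \bQ$ by the natural isomorphism of Proposition \ref{prop:rationalhomotopygroups}. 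I would likely present this second route as the primary argument and mention the first as the conceptual reason the statement is an "in particular".

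The main obstacle is the compactness-plus-filtered-colimit step: one needs that $S^0 \bQ$ is genuinely a filtered (sequential) homotopy colimit of finite spectra so that $[A, - \smashprod S^0 \bQ]$ commutes with that colimit for compact $A$, and that the resulting colimit system on $[A,X]$ is the rationalisation system. This is standard once one has the explicit Moore spectrum construction from \cite[Definition 1.5.2]{barnesthesis}, but it is the one point that genuinely uses more than formal properties of Bousfield localisation; everything else is bookkeeping with the zig-zag and Proposition \ref{prop:rationalhomotopygroups}. A minor secondary point to be careful about is that $\hat{f}_\bQ X$ denotes a fibrant replacement in the \emph{rational} model structure, so one should confirm the map $X \to \hat{f}_\bQ X$ is a rational $\pi_*$-isomorphism and that $\hat{f}_\bQ X$ is an $\Omega$-spectrum with rational homotopy groups, which is exactly the characterisation of rational fibrancy in Proposition \ref{prop:rationalhomotopygroups}.
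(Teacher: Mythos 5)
Your proposal is correct and follows exactly the route the paper intends: the corollary is left without an explicit proof precisely because it follows from the smashing-localisation zig-zag and Proposition \ref{prop:rationalhomotopygroups} via the standard compactness argument (pulling the filtered homotopy colimit defining $S^0\bQ$ out of $[A,-]$), which is what you do. Your second, ``cleaner'' route to the $\pi_*^H$ statement via the zig-zag is also the one the paper's placement of the corollary suggests.
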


Since weak equivalence of the the stable model structure are defined in terms of $\pi_*$-isomorphisms,
the triangulated category $\Ho(\GspO)$ has a set of compact generators: the suspension spectra
$\Sigma^\infty G/H_+$, for $H$ an open subgroup of $G$. Similarly, this 
set is also a set of compact generators for $\Ho(\QGspO)$, as the weak equivalences of $\QGspO$
are defined in terms of rational $\pi_*$-isomorphisms. 

\begin{corollary}\label{cor:orbitgenerate}
For $G$ a profinite group, the homotopy category of $\QGspO$ is generated by the set of compact objects 
$\Sigma^\infty G/H_+$, for $H$ an open subgroup of $G$. 
\end{corollary}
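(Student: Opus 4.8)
The plan is to reduce the statement to the two conditions a set $\mcS$ of objects must satisfy to generate the homotopy category of a stable model category: that every member of $\mcS$ is compact, and that $\mcS$ \emph{detects triviality}, i.e.\ any $X$ with $[\Sigma^k s, X] = 0$ for all $s \in \mcS$ and all $k \in \bZ$ is isomorphic to $0$. Here $\mcS = \{\Sigma^\infty G/H_+ \mid H \opensub G\}$ and the ambient triangulated category is $\Ho(\QGspO)$. Both points will follow by transporting the corresponding statements for $\Ho(\GspO)$ across the rational localisation using Corollary~\ref{cor:rationalmapstensor}.

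For detection of triviality, I would recall, as noted just before the statement, that the $\Sigma^\infty G/H_+$ for $H \opensub G$ are compact generators of $\Ho(\GspO)$, with $[\Sigma^k \Sigma^\infty G/H_+, X] \cong \pi_k^H(X)$ naturally. Since each $\Sigma^\infty G/H_+$ is compact in $\Ho(\GspO)$, Corollary~\ref{cor:rationalmapstensor} applies and gives
\[
[\Sigma^k \Sigma^\infty G/H_+, X]_\bQ \;\cong\; [\Sigma^k \Sigma^\infty G/H_+, X] \otimes \bQ \;\cong\; \pi_k^H(X) \otimes \bQ
\]
for every $G$-spectrum $X$. Thus the vanishing of all the groups on the left (over all $k$ and all open $H$) is exactly the assertion that $0 \to X$ is a rational $\pi_*$-isomorphism; as the weak equivalences of $\QGspO$ are precisely the rational $\pi_*$-isomorphisms, this says $X \simeq 0$ in $\Ho(\QGspO)$. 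Hence $\mcS$ detects triviality.

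It remains to see that each $\Sigma^\infty G/H_+$ is compact in $\Ho(\QGspO)$, i.e.\ that $[\Sigma^\infty G/H_+, -]_\bQ$ carries coproducts to direct sums. The coproduct of a family $\{X_i\}$ in $\Ho(\QGspO)$ is the localisation of the coproduct $\coprod_i X_i$ formed in $\Ho(\GspO)$, so $[\Sigma^\infty G/H_+, \coprod_i X_i]_\bQ$ may be computed by applying Corollary~\ref{cor:rationalmapstensor} to $\coprod_i X_i$; combining this with the compactness of $\Sigma^\infty G/H_+$ in $\Ho(\GspO)$ and the exactness of $-\otimes\bQ$ yields
\begin{align*}
\Big[\Sigma^\infty G/H_+, \coprod_i X_i\Big]_\bQ
& \cong \Big(\bigoplus_i [\Sigma^\infty G/H_+, X_i]\Big)\otimes\bQ \\
& \cong \bigoplus_i\big([\Sigma^\infty G/H_+, X_i]\otimes\bQ\big)
\cong \bigoplus_i [\Sigma^\infty G/H_+, X_i]_\bQ .
\end{align*}
Equivalently, one may invoke the general principle that a smashing localisation of a compactly generated triangulated category is again compactly generated, on the images of the original compact generators; the localisation here is smashing, as recorded above via the zig-zag $\hat f_\bQ X \to \hat f_\bQ X \smashprod S^0\bQ \leftarrow X \smashprod S^0\bQ$.

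I do not expect a genuine obstacle here: the corollary mostly repackages results already in place. The one step carrying real content is the interchange of $[\Sigma^\infty G/H_+, -]$ with rationalisation, and that is exactly Corollary~\ref{cor:rationalmapstensor}, which itself rests on the localisation being smashing; everything else is formal triangulated-category bookkeeping.
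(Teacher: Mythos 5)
Your proposal is correct and follows the same route the paper takes: the paper justifies the corollary by the observation (in the paragraph preceding it) that the weak equivalences of $\QGspO$ are the rational $\pi_*$-isomorphisms, i.e.\ are detected by $[\Sigma^\infty G/H_+,-]\otimes\bQ$, with compactness carrying over from $\Ho(\GspO)$ exactly as you argue via Corollary~\ref{cor:rationalmapstensor}. You have simply written out in full the bookkeeping that the paper leaves implicit.
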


This completes the construction of the model category we wish to model with algebra. 
Our next task is to study maps between objects like $\Sigma^\infty G/H_+$.  
We use the profinite version of tom-Dieck splitting to show these maps are concentrated in 
degree zero. 

\begin{proposition}[{\cite[Proposition 7.10]{fausk}}]\label{prop:tomdieck}
For $G$ a profinite group and $X$ a based $G$-space, there is an isomorphism of abelian groups
\[
\bigoplus_{(H) \opensub G} \pi_* 
\left( \Sigma_\infty EW_G H_+ \smashprod_{W_G H} X^H  \right)
\longrightarrow
\pi_*^G(\Sigma^\infty X).
\]
The sum runs over the conjugacy classes of open subgroups of $G$. 
\end{proposition}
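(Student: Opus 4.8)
The plan is to bootstrap from the classical tom Dieck splitting for finite groups by writing the profinite $G$-stable homotopy as a colimit over the finite quotients $G/N$. First I would reduce to the case that $X$ is a based $G$-CW complex: replacing $X$ by a $G$-CW approximation changes neither side, since $\Sigma^\infty$ sends the approximation to a $\pi_*$-isomorphism, and the functors $(-)^H$ and $EW_GH_+ \wedge_{W_GH} (-)$ (for $H$ open, so that $W_GH$ is a finite group and $EW_GH$ a free finite $W_GH$-CW complex) send $G$-weak equivalences to $\pi_*$-isomorphisms. In particular, after this reduction every fixed-point space $X^K$ is an ordinary CW complex.

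Next I would establish the identification
\[
\pi_*^G(\Sigma^\infty X) \;\cong\; \colim_{N}\ \pi_*^{G/N}\!\big(\Sigma^\infty_{G/N} X^N\big),
\]
the colimit running over the open normal subgroups $N$ of $G$ ordered by reverse inclusion, a directed set because intersections of open normal subgroups are open and normal. This is immediate from the definition of $\pi_*^G(\Sigma^\infty X)$ as a colimit over indexing spaces $V\subseteq\mcU$ of $[S^{*+V},S^V\wedge X]^G$: by Remark \ref{rmk:vectorfixed} every indexing space is fixed by some open normal $N$, and for a fixed $N$ the $N$-fixed indexing spaces are precisely those of the $N$-fixed part $\mcU^N$, which is a complete $G/N$-universe; moreover a $G$-map out of the $N$-fixed space $S^{*+V}$ automatically lands in $(S^V\wedge X)^N = S^V\wedge X^N$ and is then $G/N$-equivariant, so $[S^{*+V},S^V\wedge X]^G = [S^{*+V},S^V\wedge X^N]^{G/N}$. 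Reindexing the resulting double colimit gives the displayed formula, with connecting maps the evident inflation-type maps induced by $\mcU^N\subseteq\mcU^{N'}$ and $X^N\hookrightarrow X^{N'}$.

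To each finite group $G/N$ I would then apply the classical tom Dieck splitting, natural in the based $G/N$-space,
\[
\pi_*^{G/N}\!\big(\Sigma^\infty_{G/N} X^N\big) \;\cong\; \bigoplus_{(\bar H)\le G/N}\pi_*\Big(\Sigma^\infty\big(EW_{G/N}\bar H_+ \wedge_{W_{G/N}\bar H}(X^N)^{\bar H}\big)\Big),
\]
and pass to the colimit over $N$. Since the connecting maps carry the $(\bar H)$-summand to the $(q^{-1}\bar H)$-summand, where $q\colon G/N'\twoheadrightarrow G/N$ for $N'\subseteq N$, and since $q^{-1}(K/N)=K/N'$ has the same preimage $K$ in $G$, a germ in this filtered colimit of direct sums is exactly a conjugacy class $(K)$ of an open subgroup $K\le G$. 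Along such a germ one has $W_{G/N}(K/N)=N_G(K)/K=W_GK$ and $(X^N)^{K/N}=X^K$ for all sufficiently small $N$, so every transition map is an identity and the germ contributes the single group $\pi_*(\Sigma^\infty(EW_GK_+\wedge_{W_GK}X^K))$. Summing over germs recovers the right-hand side of the proposition, and one checks that the colimit of the finite-level splitting maps agrees with the natural transfer map of the statement.

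I expect the main obstacle to be the inflation-compatibility of the classical tom Dieck splitting used in the last step — that is, verifying that the finite-level decompositions assemble into a morphism of the colimit system, sending $(\bar H)$-summands to $(q^{-1}\bar H)$-summands. Depending on which construction of the splitting one adopts (the isotropy filtration of the $G/N$-space, or the Burnside-ring idempotents together with the orbit transfers), this naturality may have to be re-derived rather than quoted; alternatively one can construct the global splitting map for profinite $G$ directly out of the transfers $\Sigma^\infty_G G/H_+\to\bS$ and use the colimit of the second step to reduce the isomorphism claim to the finite case, in which case the obstacle instead becomes checking that this global map restricts to the classical one at each level $G/N$. The remaining points — directedness, cofinality, and the bookkeeping of germs — are routine.
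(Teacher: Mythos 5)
The paper gives no proof of this proposition: it is imported verbatim from Fausk \cite[Proposition 7.10]{fausk}, so there is no internal argument to compare yours against step by step. Your strategy is nonetheless sound and is very much in the spirit of the paper, which uses the same reduction-to-finite-quotients device for the neighbouring results: your middle isomorphism $\pi_*^G(\Sigma^\infty X)\cong \colim_N \pi_*^{G/N}(\Sigma^\infty X^N)$ is proved by exactly the argument underlying Lemma \ref{lem:orbitfinitetoprofinite} (every indexing space is $N$-fixed for some open normal $N$ by Remark \ref{rmk:vectorfixed}, $\mcU^N$ is a complete $G/N$-universe, and a $G$-map out of the $N$-fixed space $S^{*+V}$ lands in $S^V\smashprod X^N$), and the passage to the colimit of the levelwise splittings mirrors the proof of Theorem \ref{thm:spantoorbit}. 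The bookkeeping is also right: for $N\leqslant H$ one has $W_{G/N}(H/N)=N_G(H)/H=W_GH$ and $(X^N)^{H/N}=X^H$, and the conjugacy classes appearing at level $N$ are exactly those of the open subgroups containing $N$, which exhaust all open subgroups in the colimit. The one step that genuinely must be written out is the one you flag yourself: that the classical splittings assemble into a map of directed systems, i.e.\ that inflation along $G/N'\twoheadrightarrow G/N$ carries the $(H/N)$-summand identically onto the $(H/N')$-summand. This is not a formal consequence of the finite-group statement, and the cleaner of your two options is the second: define the global map directly as the sum of the transfer constructions for the open subgroups $H$, check it is compatible under the colimit isomorphism with the level-$N$ splitting maps (this reduces to inflation-naturality of the pretransfer for $G/H_+$, which is standard), and then deduce bijectivity from the finite case by exactness of filtered colimits. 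With that supplied, together with an explicit well-pointedness or $G$-CW hypothesis to justify your first reduction, your argument is a complete proof of the quoted result.
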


\begin{theorem}[{\cite[Theorem 2.9]{barnespadic}}]\label{thm:homotopydegreezero}
For $G$ a profinite group, the graded $\bQ$-module 
\[
[\Sigma^\infty G/H_+, \Sigma^\infty G/K_+]^{G}_* \otimes \bQ
\]
is concentrated in degree zero.
\end{theorem}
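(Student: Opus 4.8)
The plan is to reduce the computation, by standard change-of-group adjunctions, to the tom Dieck splitting of Proposition~\ref{prop:tomdieck} applied to $X = S^0$, and then to invoke the vanishing of the positive-degree rational homology of the classifying space of a finite group.

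The first step is to rewrite the mapping group as equivariant homotopy groups. For $H$ open in $G$ the orbit $G/H$ is finite, so $\Sigma^\infty G/H_+ \simeq \mathrm{Ind}_H^G \bS$, and the induction--restriction adjunction gives $[\Sigma^\infty G/H_+, Y]^G_* \cong \pi_*^H(\mathrm{Res}^G_H Y)$ for any $G$-spectrum $Y$. Applying this with $Y = \Sigma^\infty G/K_+$ and decomposing the finite $H$-set $\mathrm{Res}^G_H(G/K)$ into orbits via the double coset formula, one obtains a \emph{finite} direct sum
\[
[\Sigma^\infty G/H_+, \Sigma^\infty G/K_+]^G_* \;\cong\; \bigoplus_{j} \pi_*^{L_j}(\bS),
\qquad L_j = H \cap g_j K g_j^{-1},
\]
where the $L_j$ are open subgroups of $G$ (the identification $\pi_*^H(\Sigma^\infty H/L_+) \cong \pi_*^L(\bS)$ for $L$ of finite index in $H$ being the usual one, via the Wirthmüller isomorphism). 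Thus it suffices to show that $\pi_*^L(\bS) \otimes \bQ$ is concentrated in degree zero for an arbitrary profinite group $L$.

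For this step I apply Proposition~\ref{prop:tomdieck} to the based $L$-space $X = S^0$. Since $(S^0)^{H'} = S^0$ with trivial Weyl-group action for every $H'$, and since $\Sigma^\infty EW_L H'_+ \wedge_{W_L H'} S^0 \simeq \Sigma^\infty B(W_L H')_+$, the splitting reads
\[
\pi_*^L(\bS) \;\cong\; \bigoplus_{(H')} \pi_*\big( \Sigma^\infty B(W_L H')_+ \big),
\]
the sum running over conjugacy classes of open subgroups $H'$ of $L$. The key point is that each Weyl group $W_L H' = N_L(H')/H'$ is \emph{finite}, because $H'$ is open and hence of finite index in $L$. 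Tensoring with $\bQ$ and using the rational Hurewicz isomorphism $\pi_*(\Sigma^\infty Z_+) \otimes \bQ \cong H_*(Z;\bQ)$, each summand becomes $H_*(B(W_L H'); \bQ) = H_*(W_L H'; \bQ)$, which is $\bQ$ concentrated in degree zero because the rational homology of a finite group vanishes in positive degrees (the transfer shows $|W|$ both acts invertibly on and annihilates $H_{>0}$). Hence $\pi_*^L(\bS) \otimes \bQ$, and therefore the original graded $\bQ$-module, is concentrated in degree zero.

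The only real work is in justifying the change-of-group identities in the profinite (compact) setting: that $\Sigma^\infty G/H_+$ models induction from the open subgroup $H$, that the induction--restriction adjunction and the Wirthmüller isomorphism hold for finite-index subgroups, and that the Mackey double coset decomposition of $\mathrm{Res}^G_H(G/K)$ is finite. Each of these rests on the single feature that $G/H$ is a \emph{finite} $G$-set for $H$ open --- equivalently, that the $G$-action on $G/H$ factors through the finite quotient $G/\mathrm{core}_G(H)$ --- so that the usual finite-group arguments apply after inflation. The genuinely homotopy-theoretic input, the tom Dieck splitting, is already available as Proposition~\ref{prop:tomdieck}, which is why the argument above is essentially formal.
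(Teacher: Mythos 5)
Your argument is correct, and it is essentially the expected one: the paper does not reprove this statement but defers to \cite[Theorem 2.9]{barnespadic}, whose proof runs along exactly the lines you give --- reduce via the change-of-groups adjunctions (valid here because $H$ and $K$ are open, hence of finite index) to $\pi_*^L(\bS)\otimes\bQ$ for open $L$, apply the tom Dieck splitting of Proposition~\ref{prop:tomdieck}, and use that the Weyl groups of open subgroups are finite so that each summand is the rational homology of a finite group. All the ingredients you invoke (finiteness of the double coset decomposition, the Wirthm\"uller isomorphism for finite-index subgroups, and the vanishing of $H_{>0}(W;\bQ)$ for finite $W$) are correctly deployed, so no gap.
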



\section{Spans and the stable orbit category}\label{sec:spans}

The aim for this section is Theorem \ref{thm:spantoorbit}, 
which provides a combinatorial description 
of the stable orbit category for $G$.
That is, the full triangulated subcategory of $\Ho(\GspO)$ 
(the homotopy category of rational $G$-spectra) 
defined by the suspension spectra of finite pointed $G$-sets
is shown to be equivalent to the category of 
spans of finite $G$-sets. 
This result is well-known in the case of finite groups
but is seemingly new in the case of profinite groups. 
The method of proof is to relate the profinite case to the finite case 
by describing maps in $\Ho(\GspO)$
as a colimit of maps in $\Ho(G/N \spO)$, as $N$ varies over 
the open normal subgroups of $G$, see Lemma \ref{lem:orbitfinitetoprofinite}.

\subsection{The Burnside category}

In the case of a finite group $G$, the Burnside category is the category of $G$-sets with 
(equivalence classes) of spans as morphisms. In this subsection we generalise this 
construction to profinite groups and show how it relates to the finite group case.

\begin{definition}
A set $X$ with an action of $G$ is said to be \emph{discrete} if 
the canonical map 
\[
\colim_{H \opensub G} X^H \longrightarrow X
\]
is an isomorphism. The category of finite discrete  $G$-sets and 
equivariant maps is denoted $\Gsetsdiscfin$.
\end{definition}
Equally, one can define a discrete $G$-set as a $G$-set such that the
stabiliser of each point is open. We then see that a $G$-set $X$ is discrete 
if and only if the action on $G$ is continuous, when $X$ is equipped with the discrete topology. 
If $X$ is a finite discrete $G$-set, then the stabiliser of each point is open, 
as is the intersection of all the stabilisers. Thus a finite $G$-set $A$ is discrete
if and only if there is an open subgroup $H$ of $G$ such that $A$ is $H$-fixed. 

The class of discrete $G$-sets is closed under arbitrary coproducts and finite products.
The set of finite coproducts of $G$-sets of the form $G/H$, for $H$ an open subgroup of $G$, 
is a skeleton for the class of finite discrete  $G$-sets. 
We also note that the empty set is a finite discrete  $G$-set. 

\begin{definition}
The \emph{Burnside ring} of $G$, written as $\intburnsidering{G}$ is the Grothendieck ring of finite discrete $G$-sets. 
We further define the \emph{rational Burnside ring} of $G$ as $\ratburnsidering{G} = \intburnsidering{G} \otimes \bQ$.
\end{definition}

\begin{lemma}
For $G$ a profinite group there is a natural isomorphism 
\[
\varepsilon^* \co \underset{N \opennormalsub G}{\colim}\intburnsidering{G/N} \lra \intburnsidering{G}.
\]
\end{lemma}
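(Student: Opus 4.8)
The plan is to show that the Burnside ring of $G$ is computed as a colimit over open normal subgroups because every finite discrete $G$-set already factors through some finite quotient. First I would make precise the map $\varepsilon^*$: for each open normal $N \opennormalsub G$ the quotient map $\varepsilon_N \co G \to G/N$ induces an inflation (restriction along $\varepsilon_N$) functor $\Gsetsdiscfin[G/N] \to \Gsetsdiscfin$ sending a $G/N$-set $A$ to the same underlying set with $G$ acting via $\varepsilon_N$. This is product- and coproduct-preserving, hence induces a ring homomorphism $\intburnsidering{G/N} \to \intburnsidering{G}$; these are compatible with the transition maps of the filtered system (for $N' \leqslant N$, inflation along $G/N' \to G/N$ followed by inflation to $G$ equals inflation along $G \to G/N'$), so they assemble into $\varepsilon^* \co \colim_{N \opennormalsub G}\intburnsidering{G/N} \to \intburnsidering{G}$.

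For surjectivity: any finite discrete $G$-set $X$ has open stabilisers, and since $X$ is finite the intersection $K$ of these stabilisers is open; its core $\core_G(K)$ is an open normal subgroup $N$ with $X^N = X$, so $X$ is (inflated from) a $G/N$-set. Thus every generator $[X]$ of $\intburnsidering{G}$ is hit, giving surjectivity. For injectivity: suppose $x \in \colim_N \intburnsidering{G/N}$ maps to $0$. Represent $x$ by an element of $\intburnsidering{G/N}$ for some fixed $N$; writing it as $[A] - [B]$ with $A, B$ finite $G/N$-sets, the hypothesis says the inflations of $A$ and $B$ to $G$ are isomorphic as $G$-sets. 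But a $G$-set isomorphism of $N$-fixed sets is automatically a $G/N$-set isomorphism, so $[A] = [B]$ already in $\intburnsidering{G/N}$, i.e. $x = 0$. Naturality in $G$ (if required by the statement's word ``natural'') follows since all constructions are via restriction along group quotient maps and these commute with continuous homomorphisms.

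I expect the main obstacle to be purely bookkeeping rather than conceptual: one must check that the filtered system $\{\intburnsidering{G/N}\}_{N \opennormalsub G}$ is genuinely filtered (the open normal subgroups are closed under finite intersection, and $N' \leqslant N$ gives a quotient map $G/N' \to G/N$ — this is standard for profinite $G$) and that the inflation maps are strictly functorial on the nose, not merely up to isomorphism, so that the colimit is well-defined. A secondary subtlety is the passage from ``isomorphic as $G$-sets'' to ``isomorphic as $G/N$-sets'' for $N$-fixed sets; this is immediate once one observes that the $G$-action on such a set is, by definition, pulled back from a $G/N$-action, so any $G$-equivariant bijection is $G/N$-equivariant. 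Everything else — additivity, multiplicativity, compatibility with transition maps — is a routine diagram chase.
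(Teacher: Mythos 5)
Your proof is correct and follows essentially the same route as the paper: construct $\varepsilon^*$ from the compatible inflation maps, deduce surjectivity from the fact that every finite discrete $G$-set is fixed by an open normal subgroup, and deduce injectivity from the injectivity of each $\varepsilon^*_N$ together with filteredness of the colimit. The only step you elide is that $[\varepsilon^* A]=[\varepsilon^* B]$ in the Grothendieck ring implies $\varepsilon^* A \cong \varepsilon^* B$, which requires the standard observation that the monoid of finite discrete $G$-sets under disjoint union is free (hence cancellative) on the orbits $G/H$; the paper simply asserts injectivity of $\varepsilon^*_N$ without this unpacking.
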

\begin{proof}
A finite $G/N$-set $A$ can be considered as a $G$-set via inflation, $\varepsilon^* A$. 
Since each point of $\varepsilon^* A$ is fixed by $N$, this is a finite discrete $G$-set.
Inflation gives an injective ring map
\[
\varepsilon^*_N \co \intburnsidering{G/N} \lra \intburnsidering{G}.
\]
The maps $\varepsilon^*_N$ are compatible with the maps forming the colimit (as these are also inflation maps), 
giving the map $\varepsilon^*$. 
As the colimit is filtered, $\varepsilon^*$ is also injective. 

Any finite $G$-set $A$ is fixed by some open subgroup $H$, which must contain an open 
normal subgroup $N$. Thus $A$ is in the image of $\varepsilon^*_N$ and so 
$\varepsilon^*$ is surjective.
\end{proof}

We can generalise the construction of the Burnside ring to make a category. 

\begin{definition}
Let $G$ be a profinite group. 
A \emph{span} of finite discrete $G$-sets, 
a pair of equivariant maps $B \xleftarrow{\beta} A \xrightarrow{\gamma} C$, sometimes shortened to $(\beta, \gamma)$.
Two spans $B \xleftarrow{\beta} A \xrightarrow{\gamma} C$ and 
$B \xleftarrow{\beta'} A' \xrightarrow{\gamma'} C$
are \emph{equivalent} if there is an equivariant isomorphism $A \to A'$
such that the following diagram commutes. 
\[
\xymatrix@R-0.5cm@C+0.5cm{
& \ar[dl]_{\beta} A \ar[dr]^{\gamma} \ar[dd]_\alpha\\
B 
&& C \\
& \ar[ul]^{\beta'} A' \ar[ur]_{\gamma'}
}
\]
We write $[\beta, \gamma]$ for the equivalence class of $(\beta, \gamma)$.
\end{definition}

We recall the notion of composition of spans.
Take two spans
$B \xleftarrow{\beta} A \xrightarrow{\gamma} C$ and 
$C \xleftarrow{\gamma'} A' \xrightarrow{\delta} D$, then construct $A''$ as the pullback of $\gamma$ and $\gamma'$

\[
\xymatrix@R-0.5cm@C+0.5cm{
&&  \ar[dl]_{\theta} A'' \ar[dr]^{\theta'}
\\
& \ar[dl]_{\beta} A \ar[dr]^{\gamma} 
&& \ar[dl]_{\gamma'} A' \ar[dr]^{\delta} 
\\
B 
&& C 
&&
D
}
\]
The composite of $(\beta, \gamma)$ and $(\gamma', \delta)$ is the span
$(\beta \circ \theta, \delta \circ \theta')$. 
This composition is well-defined under equivalence of spans. 

Further, there is an addition on (equivalence classes of) spans.
Consider two spans $B \xleftarrow{\beta} A \xrightarrow{\gamma} C$ and 
$B \xleftarrow{\beta'} A' \xrightarrow{\gamma'} C$. 
Their sum is the span 
\[
B \xleftarrow{\beta, \beta'} A \coprod A' \xrightarrow{\gamma, \gamma'} C.
\]
Additions of spans is associative, compatible with equivalence of spans
and the unit is the span $\emptyset \leftarrow \emptyset \rightarrow \emptyset$.

\begin{definition}
The \emph{Burnside category} of $G$ is the category 
with objects the finite discrete $G$-sets and 
morphisms given by Grothendieck construction on sets of 
equivalence classes of spans of finite discrete $G$-sets, denoted $\spanunstableorbcat$. 
\end{definition}

Thus, a map $A \to B$ in the Burnside category is a formal difference of equivalence classes of spans.
We also see that the Burnside ring of $G$ is the ring $\spanunstableorbcat(\ast, \ast)$. 

Just as for the Burnside ring, we can relate the Burnside category 
for $G$ to the Burnside categories for the groups $G/N$, 
where $N$ is an open normal subgroup of $G$. 

\begin{lemma}\label{lem:spanstofinite}
If $A$ and $B$ are finite discrete $G$-sets, for $G$ a profinite group, then there is an isomorphism:
\begin{align*}
\underset{N \opennormalsub G}{\colim} \spanunstableorbcatN{G/N}\left(A,B \right)
\cong 
\spanunstableorbcat \left(A,B \right).
\end{align*}
\end{lemma}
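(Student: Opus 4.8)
The plan is to follow the proof of the Burnside-ring case above (the isomorphism $\varepsilon^* \co \underset{N \opennormalsub G}{\colim}\intburnsidering{G/N} \lra \intburnsidering{G}$), upgrading it from rings to hom-sets of the Burnside category. The underlying principle is that a finite discrete $G$-set, together with any finite amount of equivariant data attached to it, is already defined over some finite quotient $G/N$, and that the poset of open normal subgroups of $G$ under reverse inclusion is directed, hence the colimit is filtered. Moreover, since $\spanunstableorbcat(A,B)$ is by definition the Grothendieck group of the commutative monoid of equivalence classes of spans $A \leftarrow C \to B$ (addition being disjoint union of apexes), and group completion is a left adjoint and so commutes with filtered colimits, it is enough to establish the analogous isomorphism on the level of these monoids, before group completion.

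First I would fix an open normal subgroup $N_0 \opennormalsub G$ with $A^{N_0} = A$ and $B^{N_0} = B$; such an $N_0$ exists because $A$ and $B$ are finite discrete, so the stabiliser of each of their finitely many points is open, and we may take $N_0 = \core_G(H)$ for $H$ the intersection of all these stabilisers. For every open normal $N \leqslant N_0$ the sets $A$ and $B$ are inflated from uniquely determined finite $G/N$-sets along $\varepsilon_N \co G \to G/N$, and inflating a span of $G/N$-sets along $\varepsilon_N$ produces a span of finite discrete $G$-sets. Inflation is compatible with disjoint unions and with the pullbacks used to compose spans, and it is functorial, so — writing $\spanunstableorbcatN{-}$ and $\spanunstableorbcat$ loosely for the hom-monoids before group completion — it descends to a homomorphism of monoids
\[
\spanunstableorbcatN{G/N}(A,B) \lra \spanunstableorbcat(A,B).
\]
These maps are compatible with the structure maps of the colimit, which are themselves inflations along the quotient maps $G/N' \to G/N$, and since the $N \leqslant N_0$ are cofinal among all open normal subgroups of $G$ they assemble into a single map $\varepsilon^*$ out of the colimit; this is the candidate isomorphism.

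For surjectivity: a span $A \leftarrow C \to B$ of finite discrete $G$-sets has apex $C$ fixed by some open normal $N_1$, so with $N = N_0 \cap N_1$ all three objects are $N$-fixed and the two maps, being $G$-equivariant, are $G/N$-equivariant; thus the span is the inflation of a span of $G/N$-sets. For injectivity: if a span of $G/N$-sets and a span of $G/N'$-sets become equivalent as spans of $G$-sets, the witnessing $G$-isomorphism of apexes is between two finite discrete $G$-sets, hence $G/N''$-equivariant for any sufficiently small open normal $N'' \leqslant N \cap N'$ fixing both apexes, so the two spans are already equivalent over $G/N''$; the same remark applies to any cancellation span needed to compare formal differences, as it too is a span of finite discrete $G$-sets defined over a single $G/N$. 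This establishes the monoid-level isomorphism, and applying group completion yields the statement.

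The step I expect to be the main obstacle is not mathematically deep but requires care: verifying that inflation along $\varepsilon_N$ is compatible with the pullback used to compose spans and with passage to equivalence classes, so that the comparison maps are genuinely well-defined homomorphisms that genuinely commute with the colimit structure maps. The one conceptual point worth isolating is that group completion of commutative monoids, being a left adjoint, preserves filtered colimits, which cleanly reduces all the Grothendieck-group bookkeeping to the monoid level.
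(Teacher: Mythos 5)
Your proposal is correct and follows essentially the same route as the paper: inflation induces the comparison map, and the inverse comes from the fact that every finite discrete $G$-set (and every equivariant map or isomorphism between such) is fixed by some open normal subgroup, hence defined over a finite quotient. The paper's proof is considerably terser --- it only exhibits the inverse on spans and leaves the injectivity and Grothendieck-group bookkeeping implicit --- so your extra care with equivalences of spans and with group completion commuting with filtered colimits is a faithful elaboration rather than a different argument.
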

\begin{proof}
The maps of the colimit are given by inflation functors and inflation from $G/N$ to $G$
induces the map to the codomain. 
For the inverse, take a span
\begin{align*}
\xymatrix{B  &  A \ar[r]  \ar[l]  & C},
\end{align*}
and choose an open normal subgroup $N$ that fixes each of element of $A$, $B$ and $C$.  
Then this span appears in term $N$ of the colimit.
\end{proof}

\subsection{The stable orbit category}
In this subsection we study maps in the $G$-equivariant stable homotopy category  
between spectra of the form $\Sigma^\infty A_+$, where $A$ is a finite discrete $G$-set. 
Our aim is to relate this to unstable homotopy classes of maps
of $G/N$-spaces, where $N$ runs over the open normal subgroups of $G$. 
We start by comparing maps in the $G$-equivariant stable homotopy category 
to the $G$-equivariant unstable homotopy category.

We need two categories, one defined via the $G$-equivariant stable category 
and one via the rational analogue. It is important to note that these categories are 
not graded, we use $\pi_0$ in the notation of the categories as a reminder of this fact. 
The last sentence of the definition holds due to Corollary \ref{cor:rationalmapstensor}.

\begin{definition}\label{def:orbitcats}
We define a category $\piorbcat$, called the \emph{$G$-equivariant stable orbit category}.
The objects are the class of $G$-spectra of the form $\Sigma^\infty A_+$, 
for $A$ a finite discrete $G$-set.
The morphisms are given by 
\[
\piorbcat(\Sigma^\infty A_+, \Sigma^\infty B_+) = [\Sigma^\infty A_+, \Sigma^\infty B_+]^G
\]
the set of maps in the homotopy category of $\GspO$.

Similarly, we define a category $\piratorbcat$, called the \emph{rational $G$-equivariant stable orbit category}.
The objects are the same as for $\piorbcat$, but the morphisms are given by 
\[
\piratorbcat(\Sigma^\infty A_+, \Sigma^\infty B_+) = [\Sigma^\infty A_+, \Sigma^\infty B_+]^G \otimes \bQ
\]
the set of maps in the homotopy category of $\QGspO$.
\end{definition}

Recall that a finite discrete $G$-set is a disjoint union of homogeneous spaces $G/H$ 
for $H$ open, hence every finite discrete $G$-set $A$ is a finite $G$-CW-complex
and $A_+$ is finite pointed $G$-CW-complex. 

\begin{lemma}[{\cite[Corollary 7.2]{fausk}}]\label{lem:stabletounstable}
For $G$ a profinite group, there is an isomorphism of abelian groups
\[
[\Sigma^\infty A, \Sigma^\infty B]^G
\cong 
\colim_{W \in \mcU} [A \smashprod S^W, B \smashprod S^W]^{G}
\]
where the right hand terms indicate maps in the homotopy category of pointed $G$-spaces
and $A$ and $B$ are finite pointed $G$-CW-complexes. 
\end{lemma}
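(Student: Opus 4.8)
As stated, this is \cite[Corollary 7.2]{fausk}, so one route is simply to cite it; what follows is the shape of an independent argument. The plan is to pass from stable maps to unstable maps in two moves: first replace the target spectrum by a stably fibrant model (which is an $\Omega$-$G$-spectrum) and use the suspension-spectrum/zeroth-space adjunction; then exploit that $A$, being a finite pointed $G$-CW-complex, is a compact object of $\Ho(\eqtop{G})$.

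First, let $R$ denote stable fibrant replacement in $\GspO$. Since $A$ is a finite pointed $G$-CW-complex, $\Sigma^\infty A$ is cofibrant, so $[\Sigma^\infty A, \Sigma^\infty B]^G \cong [\Sigma^\infty A, R\Sigma^\infty B]^G$. The functor $\Sigma^\infty \colon \eqtop{G} \to \GspO$ is left Quillen for the stable model structure, with point-set right adjoint $\mathrm{Ev}_0$ (evaluation at $0$); hence on homotopy categories
\[
[\Sigma^\infty A, R\Sigma^\infty B]^G \cong [A, (R\Sigma^\infty B)(0)]^G,
\]
the right-hand side denoting pointed $G$-homotopy classes of maps in $\eqtop{G}$. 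Next, the zeroth space of the fibrant replacement admits the usual spectrification description
\[
(R\Sigma^\infty B)(0) \;\simeq\; \hocolim_{W} \Omega^W\big( S^W \smashprod B \big),
\]
the homotopy colimit running over the indexing spaces $W$ of $\mcU$ with the structure maps of $\Sigma^\infty B$.

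Because $A$ is a finite pointed $G$-CW-complex it is compact in $\Ho(\eqtop{G})$, so $[A,-]^G$ commutes with this filtered homotopy colimit; and for each $W$ the sphere $S^W$ is a finite pointed $G$-CW-complex by Remark \ref{rmk:vectorfixed}, so $S^W \smashprod (-)$ is left adjoint to $\Omega^W$ on $\Ho(\eqtop{G})$. Therefore
\[
[A, (R\Sigma^\infty B)(0)]^G \cong \colim_{W} [A, \Omega^W( S^W \smashprod B )]^G \cong \colim_{W} [S^W \smashprod A, S^W \smashprod B]^G,
\]
and reindexing the colimit over the indexing spaces of $\mcU$ gives the claimed isomorphism. (Alternatively, one can argue by cellular induction on $A$: both sides are contravariant functors on finite pointed $G$-CW-complexes that send cofibre sequences to long exact sequences and finite wedges to products, and the case $A = G/H_+ \smashprod S^n$ is essentially the definition of $\pi_n^H(\Sigma^\infty B)$.)

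The technical heart is the spectrification formula for $(R\Sigma^\infty B)(0)$ together with the compactness argument that lets $[A,-]^G$ pass through the homotopy colimit; one must also track that every construction here — $R$, $\mathrm{Ev}_0$, $\Omega^W$, and $S^W \smashprod (-)$ — behaves correctly for the model structure built from the \emph{open} subgroups of $G$, which is the only place the profinite setting differs from the familiar finite-group statement. None of this is difficult, and it is precisely what is recorded in \cite[Corollary 7.2]{fausk}.
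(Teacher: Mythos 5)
The paper offers no proof of this lemma beyond the citation to \cite[Corollary 7.2]{fausk}, and your primary route --- deferring to that citation --- matches it exactly. Your supplementary sketch (the $\Sigma^\infty \dashv \mathrm{Ev}_0$ Quillen adjunction, the spectrification description of the stably fibrant replacement, and compactness of finite pointed $G$-CW-complexes to pass $[A,-]^G$ through the filtered homotopy colimit) is the standard argument underlying Fausk's result and is correct, with the one genuinely delicate point --- that the colimit $\hocolim_W \Omega^W(S^W \smashprod B)$ really does compute the zeroth space of an $\Omega$-$G$-spectrum for the model structure built from open subgroups --- rightly flagged as the content of the cited reference.
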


Now we show how the equivariant stable homotopy category for $G$ relates to the 
equivariant stable homotopy category for finite quotients $G/N$. 
Whenever we talk about $G/N$-spectra, we will use the $G/N$-universe 
$\mcU^N$. 
This universe is a complete $G/N$-universe as any finite-dimensional 
$G/N$-inner product space $V$ can be written as $(\varepsilon^* V)^N$
and $\varepsilon^* V$ is isomorphic to an indexing space of $\mcU$, as 
$\mcU$ is complete.

Let $A$ and $B$ be finite pointed $G$-CW-complexes, and $N_1 \leqslant N_2$ be open normal subgroups of $G$, which
fix all of $A$ and $B$.  
Then $A$ can be considered as either a $G/N_2$-CW-complex or a $G/N_1$-CW-complex and the inflation functor 
$\varepsilon^*$ from $G/N_2$-spaces to $G/N_1$-spaces sends the $G/N_2$-version of $A$ to the $G/N_1$-version.
Hence, the inflation functor $\varepsilon^*$ induces a natural map
\[
[\Sigma^\infty A, \Sigma^\infty B]^{G/N_2} \lra [\Sigma^\infty A, \Sigma^\infty B]^{G/N_1}.
\]

We can find a more direct description of this map using Lemma \ref{lem:stabletounstable}. 
An element of the domain can be represented as a map of pointed $G/N_2$-spaces
\[
f\colon A\smashprod S^{V}\rightarrow B\smashprod S^{V}
\]
for some indexing space $V \subset \mcU^{N_2}$. 
Applying the inflation functor gives a map of pointed $G/N_1$-spaces $\varepsilon^* f$
and since $V$ is $N_2$-fixed, this defines a element of 
\[
\colim_{W \in \mcU^{N_1}} [A \smashprod S^W, B \smashprod S^W]^{G/N_1}. 
\]
This assignment is compatible with taking colimits over $V^{N_2} \subset \mcU^{N_2}$
and taking $G/N_2$-equivariant homotopy classes. Hence, we may construct
the colimit term of the following result.

\begin{lemma}\label{lem:orbitfinitetoprofinite}

If $A$ and $B$ are finite pointed $G$-CW-complexes, for $G$ a profinite group, 
then there is an isomorphism of abelian groups:
\begin{align*}
\underset{N \opennormalsub G}{\colim}
\left[\Sigma^\infty A, \Sigma^\infty  B\right]^{G/N} 
\cong \left[\Sigma^\infty A, \Sigma^\infty  B\right]^G.
\end{align*}
\end{lemma}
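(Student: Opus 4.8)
The plan is to reduce the statement to the two colimit lemmas already established, namely Lemma \ref{lem:stabletounstable} (which computes $[\Sigma^\infty A, \Sigma^\infty B]^G$ as a colimit of unstable homotopy classes over the indexing spaces $W \in \mcU$) and the analogous statement for each finite quotient $G/N$. The key observation is that a finite pointed $G$-CW-complex $A$ has open stabiliser, so $A$ is fixed by some open normal subgroup; the same holds for $B$. Thus both sides of the claimed isomorphism are really built from spaces on which all but finitely much of $G$ acts trivially, and the colimit over $N \opennormalsub G$ is exactly the bookkeeping device that records this.

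Concretely, I would argue as follows. First, apply Lemma \ref{lem:stabletounstable} to rewrite the right-hand side as $\colim_{W \in \mcU} [A \smashprod S^W, B \smashprod S^W]^G$. Next, by Remark \ref{rmk:vectorfixed}, every indexing space $W \subset \mcU$ is fixed by some open normal subgroup $N$, and then $W = (\varepsilon^* W_N)^{?}$ — more precisely $W$ arises as an indexing space of $\mcU^N$ inflated to $G$ — so the colimit over $W \in \mcU$ may be refined as a colimit first over $N \opennormalsub G$ and then over indexing spaces $W \subset \mcU^N$. Since $A$ and $B$ are (for $N$ small enough) $G/N$-CW-complexes with $W$ being $N$-fixed, the $G$-equivariant homotopy classes $[A \smashprod S^W, B \smashprod S^W]^G$ coincide with the $G/N$-equivariant homotopy classes $[A \smashprod S^W, B \smashprod S^W]^{G/N}$, because a $G$-map between $N$-fixed spaces is the same thing as a $G/N$-map, and $G$-homotopies between such maps factor through $G/N$ likewise. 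This lets me identify
\[
\colim_{W \in \mcU} [A \smashprod S^W, B \smashprod S^W]^G
\;\cong\;
\underset{N \opennormalsub G}{\colim}\ \colim_{W \in \mcU^N} [A \smashprod S^W, B \smashprod S^W]^{G/N}.
\]
Finally, applying Lemma \ref{lem:stabletounstable} again — this time for the finite group $G/N$ and the complete $G/N$-universe $\mcU^N$ — identifies the inner colimit with $[\Sigma^\infty A, \Sigma^\infty B]^{G/N}$, and one checks that under these identifications the transition maps of the outer colimit are precisely the inflation-induced maps described just before the lemma statement. Reassembling gives the desired isomorphism.

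The main obstacle, I expect, is the interchange-of-colimits step: one must show that the single colimit over $W \in \mcU$ genuinely splits as an iterated colimit over $N$ and then over $W \subset \mcU^N$, with matching transition maps. This requires checking that the poset of pairs $(N, W)$ with $W$ an $N$-fixed indexing space of $\mcU$ is cofinal in (and compatibly filtered with) the poset of indexing spaces of $\mcU$, using that $\mcU$ is complete so that every $G/N$-universe $\mcU^N$ embeds, and that every finite-dimensional $W \subset \mcU$ is $N$-fixed for some open normal $N$ by Remark \ref{rmk:vectorfixed}. Once cofinality is in place, naturality of the identification $[-,-]^G = [-,-]^{G/N}$ on $N$-fixed spaces is routine, and the compatibility of transition maps with inflation is exactly the ``more direct description'' computed in the paragraph preceding the lemma. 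A secondary, purely bookkeeping point is that one should fix once and for all an open normal $N_0$ fixing both $A$ and $B$ and restrict the outer colimit to $N \leqslant N_0$, which is cofinal in $\{N \opennormalsub G\}$ and ensures all the spaces in sight are genuinely $G/N$-CW-complexes.
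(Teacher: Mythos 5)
Your proposal is correct and follows essentially the same route as the paper's proof: both sides are rewritten via Lemma \ref{lem:stabletounstable} as colimits of unstable homotopy classes, Remark \ref{rmk:vectorfixed} is used to find an open normal $N$ fixing $A$, $B$ and any given indexing space $V$, and the identification of $G$-maps between $N$-fixed spaces with $G/N$-maps then gives the inverse. The paper simply phrases the interchange-of-colimits step you worry about as an explicit construction of a two-sided inverse on representatives, which amounts to the same cofinality argument.
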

\begin{proof}
We have seen that there are isomorphisms 
\[
\underset{N \opennormalsub G}{\colim}
\left[\Sigma^\infty A, \Sigma^\infty B\right]^{G/N} 
\cong
\underset{N \opennormalsub G}{\colim}
\left(
\colim_{W \in \mcU^N} [A \smashprod S^W, B \smashprod S^W]^{G/N}
\right)
\]
The right hand term maps to 
\[
\colim_{W \in \mcU} [A \smashprod S^W, B \smashprod S^W]^{G} 
\cong 
\left[\Sigma^\infty A , \Sigma^\infty B \right]^G
\]
via the inflation functors. 

We construct an inverse. Take a representative 
$f\colon A\wedge S^V\rightarrow B\wedge S^V$. 
The finite pointed $G$-CW-complexes $A$ and $B$ are fixed by some open normal subgroup.
By Remark \ref{rmk:vectorfixed}, the $G$-inner product space $V$ 
must also be fixed by some open normal subgroup of $G$. 
By taking intersections there is an open normal
subgroup $N$ of $G$, which also fixes all of $A$, $B$ and $V$. 
It follows that $f$ defines an element of 
$[A \smashprod S^V, B \smashprod S^V]^{G/N}$.
One can verify that this is an inverse to the map of the statement.
\end{proof}

Recall the grading convention $[\Sigma^n X, \Sigma^m Y]^G = [X,Y]_{n-m}^G$.
Since suspension by $S^1$ preserves finite (pointed) $G$-CW-complexes, we have the following
extension. 
\begin{corollary}
For $G$ a profinite group, there is an isomorphism of graded abelian groups
\begin{align*}
\underset{N \opennormalsub G}{\colim}
\left[\Sigma^\infty A, \Sigma^\infty  B\right]_*^{G/N} 
\cong \left[\Sigma^\infty A, \Sigma^\infty  B\right]_*^G
\end{align*}
for $A$ and $B$ finite $G$-CW-complexes. 
\end{corollary}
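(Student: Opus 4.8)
The plan is to deduce the graded statement from the ungraded one of Lemma \ref{lem:orbitfinitetoprofinite} by absorbing the suspension into the source or the target. Recall the grading convention: for $k \geqslant 0$ we have $[\Sigma^\infty A, \Sigma^\infty B]_k^G = [\Sigma^k \Sigma^\infty A, \Sigma^\infty B]^G$, while for $k < 0$ we have $[\Sigma^\infty A, \Sigma^\infty B]_k^G = [\Sigma^\infty A, \Sigma^{-k}\Sigma^\infty B]^G$. Since $\Sigma^\infty$ commutes with suspension, $\Sigma^k \Sigma^\infty A \cong \Sigma^\infty (S^k \smashprod A)$ for $k \geqslant 0$, and similarly $\Sigma^{-k} \Sigma^\infty B \cong \Sigma^\infty(S^{-k}\smashprod B)$ for $k<0$. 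The sphere $S^k$ carries the trivial $G$-action, and iterating the fact (noted above) that smashing with $S^1$ preserves finite pointed $G$-CW-complexes, both $S^k \smashprod A$ and $S^{-k}\smashprod B$ are again finite pointed $G$-CW-complexes.

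First I would treat $k \geqslant 0$: applying Lemma \ref{lem:orbitfinitetoprofinite} with the finite pointed $G$-CW-complexes $S^k \smashprod A$ and $B$ gives a natural isomorphism
\[
\underset{N \opennormalsub G}{\colim} \left[\Sigma^\infty (S^k \smashprod A), \Sigma^\infty B\right]^{G/N}
\cong
\left[\Sigma^\infty (S^k \smashprod A), \Sigma^\infty B\right]^G,
\]
and the case $k<0$ follows identically, applying the lemma to $A$ and $S^{-k}\smashprod B$. The identifications of the previous paragraph hold $G/N$-equivariantly as well — inflation is symmetric monoidal and fixes the trivial $G$-space $S^k$ — so these combine to a degreewise isomorphism
\[
\underset{N \opennormalsub G}{\colim} \left[\Sigma^\infty A, \Sigma^\infty B\right]_k^{G/N}
\cong
\left[\Sigma^\infty A, \Sigma^\infty B\right]_k^G
\]
for every integer $k$. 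Since colimits of graded abelian groups are computed degreewise, this assembles to the claimed isomorphism of graded abelian groups, with the same inflation functors inducing it as in the ungraded statement.

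There is no substantial obstacle; the extension is routine. The only points meriting a word of care are that suspension of a finite pointed $G$-CW-complex is again one, and that this operation — together with the adjunction isomorphisms absorbing spectrum-level suspensions into space-level smash products — is natural and commutes with inflation, so that the degreewise isomorphisms are genuinely the components of a single map of graded groups rather than an unrelated family of isomorphisms.
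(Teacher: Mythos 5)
Your proof is correct and follows exactly the route the paper intends: the corollary is deduced from Lemma \ref{lem:orbitfinitetoprofinite} by observing that suspension preserves finite pointed $G$-CW-complexes and absorbing the suspensions into $A$ or $B$ according to the sign of the degree. The paper leaves this as an immediate consequence of the remark preceding the statement, so your write-up simply makes explicit the same argument.
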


\begin{theorem}\label{thm:spantoorbit}
For $G$ a profinite group, there is an equivalence of categories
\begin{align*}
\psi_G \colon
\spanunstableorbcat 
& \longrightarrow 
\piorbcat
\\
A&\mapsto \Sigma^{\infty}A_{+}\,\text{on objects}
\\
\left[ B \xleftarrow{\beta} A \xrightarrow{\alpha} C  \right] 
&
\mapsto \Sigma^{\infty}\beta \circ \tau(\alpha)\,\text{on morphisms}
\end{align*}
where $\tau(\alpha)$ is the transfer map construction associated to $\alpha$, see 
\cite[Construction II.5.1]{lms86} or work of the second author \cite[Construction 3.1.11]{sugruethesis}.
\end{theorem}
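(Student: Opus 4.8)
The plan is to reduce the profinite statement to the known finite-group case via the colimit descriptions established in Lemma \ref{lem:orbitfinitetoprofinite} and Lemma \ref{lem:spanstofinite}, and then check that the functor $\psi_G$ is compatible with these colimit systems. First I would recall that for a finite group $Q$ the functor $\psi_Q \colon \spanunstableorbcatN{Q}(-,-) \to \pi_0(\mathfrak{O}_Q)$ sending a $Q$-set $A$ to $\Sigma^\infty A_+$ and a span $[B \xleftarrow{\beta} A \xrightarrow{\alpha} C]$ to $\Sigma^\infty \beta \circ \tau(\alpha)$ is an equivalence of categories; this is the classical identification of the Burnside category (the stable orbit category of $Q$-spectra on a complete universe) with spans of finite $Q$-sets, and I would cite \cite[Construction II.5.1]{lms86} for the transfer maps together with the finite-group classification (for instance \cite[Example 5.1.2]{ss03stabmodcat} or standard references). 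The key point is that $\psi_Q$ is bijective on Hom-sets and compatible with composition: transfers compose via pullbacks of $Q$-sets (the double coset formula), which is exactly the composition law for spans, while $\Sigma^\infty(-)_+$ is functorial on honest maps, and the two interact correctly under the standard transfer identities.

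Next I would verify naturality in the group. For $N_1 \leqslant N_2$ open normal subgroups of $G$ fixing the $G$-sets in sight, inflation $\varepsilon^* \colon \spanunstableorbcatN{G/N_2} \to \spanunstableorbcatN{G/N_1}$ is a functor on objects and spans, and on the spectral side the inflation maps $[\Sigma^\infty A_+, \Sigma^\infty B_+]^{G/N_2} \to [\Sigma^\infty A_+, \Sigma^\infty B_+]^{G/N_1}$ are the ones appearing in Lemma \ref{lem:orbitfinitetoprofinite}. The square
\[
\xymatrix@C+0.3cm{
\spanunstableorbcatN{G/N_2}(A,B) \ar[r]^-{\psi_{G/N_2}} \ar[d]_{\varepsilon^*} & [\Sigma^\infty A_+, \Sigma^\infty B_+]^{G/N_2} \ar[d]^{\varepsilon^*} \\
\spanunstableorbcatN{G/N_1}(A,B) \ar[r]^-{\psi_{G/N_1}} & [\Sigma^\infty A_+, \Sigma^\infty B_+]^{G/N_1}
}
\]
commutes because inflation takes the transfer map of $\alpha$ (regarded as a $G/N_2$-map) to the transfer map of $\alpha$ regarded as a $G/N_1$-map — transfers are constructed using the linear-isometries/Thom space machinery applied to equivariant embeddings, and such embeddings for $N_2$-fixed data are a fortiori $N_1$-fixed, so the construction is stable under inflation — and because $\Sigma^\infty(\varepsilon^*\beta)_+ = \varepsilon^* \Sigma^\infty \beta_+$. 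Passing to the filtered colimit over $N \opennormalsub G$ and using Lemma \ref{lem:spanstofinite} on the left and Lemma \ref{lem:orbitfinitetoprofinite} on the right (with $A$ replaced by $A_+$, $B$ by $B_+$) yields that $\psi_G$ is an isomorphism on each Hom-set. One also checks it is essentially surjective, since every object of $\piorbcat$ is by definition $\Sigma^\infty A_+$ for some finite discrete $G$-set $A$, which is in the image on the nose.

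Finally I would confirm that $\psi_G$ is a functor, i.e. respects composition and identities. Identities are clear since the identity span goes to $\Sigma^\infty \id_+ \circ \tau(\id) = \id$. For composition, given spans $(\beta,\alpha)$ and $(\gamma',\delta)$ with pullback $A''$, one must show $\Sigma^\infty(\delta\theta')_+ \circ \tau(\beta\theta) \cdot$-style equals $\Sigma^\infty\delta_+ \circ \tau(\gamma') \circ \Sigma^\infty\beta_+ \circ \tau(\alpha)$; this follows from the projection/base-change formula for transfers along a pullback square of finite $G$-sets, which in turn reduces — via Lemma \ref{lem:orbitfinitetoprofinite} and the fact that the relevant pullback square of $G$-sets is already defined over some finite quotient $G/N$ — to the corresponding identity for the finite group $G/N$. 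The main obstacle is precisely this last compatibility: one needs the transfer construction $\tau$ to be natural with respect to inflation and to satisfy the base-change formula, so that the entire verification can be pushed down to a finite quotient where the result is classical. All the genuinely homotopy-theoretic content is thereby isolated in the finite-group statement together with the colimit lemmas already proved; the profinite case is then a formal consequence.
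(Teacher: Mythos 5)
Your proposal is correct and follows essentially the same route as the paper's proof: cite the finite-group case, check that inflation intertwines $\psi_{G/N}$ and $\psi_{G/N'}$, and pass to the filtered colimit using Lemma \ref{lem:spanstofinite} and Lemma \ref{lem:orbitfinitetoprofinite} to conclude fullness and faithfulness, with essential surjectivity immediate. The only difference is that you spell out the functoriality and base-change verifications that the paper leaves implicit.
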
 
\begin{proof}
That $\psi_G$ is an equivalence for finite groups $G$ is well-known, see \cite[Proposition V.9.6]{lms86}.
We will use that result to extend from the finite case to the profinite case. 

For an inclusion $N \to N'$ of open normal subgroups, the inflation functor 
from $\spanunstableorbcatN{G/N}$ to  $\spanunstableorbcatN{G/N'}$ 
commutes with $\psi_{G/N}$ and $\psi_{G/N'}$, hence the following diagram commutes.
\begin{align*}
\xymatrix@C+1.5cm{
\spanunstableorbcat \left( A,B \right)
  \ar[r]^{\psi_G} &
\left[\Sigma^\infty A_{+},\Sigma^\infty B_{+}\right]^G
\\  
\underset{N \opennormalsub G}{\colim} \spanunstableorbcatN{G/N}\left(A,B\right)
\ar[r]_-{\cong}^{\psi_{G/N}}  \ar[u]^{\cong} &
\underset{N \opennormalsub G}{\colim} \left[\Sigma^\infty A_{+}, \Sigma^\infty B_{+}\right]^{G/N}
\ar[u]^{\cong}
}
\end{align*}
This proves that $\psi_G$ is full and faithful, essential surjectivity is immediate. 
\end{proof}

\section{The classification}

We give the main result, the classification (in terms of Quillen equivalences) 
of rational $G$-equivariant spectra, for profinite $G$, 
in terms of a simple algebraic model. In fact, by previous work of the authors \cite{BSmackey}
we give two equivalent algebraic models. The first
is the category of (chain complexes of) rational $G$-Mackey functors, the second is the category 
of Weyl $G$-sheaves over the space of closed subgroups of $G$. The relative advantages of two
descriptions are explained in that reference, though we will need the sheaf description in  
Section \ref{sec:injdim}.

\subsection{Mackey functors}\label{subsec:mackey}

There are several equivalent definitions of Mackey functors, we briefly describe 
the three most common variations, leaving the axioms of the first two versions for references such as 
Lindner \cite{lindner} or Thiel \cite{Thiel}.  
These three definitions are shown to be equivalent in 
\cite[Section 2.1]{sugruethesis}, which follows the work of \cite{lindner}.

\begin{enumerate}
\item A set of abelian groups $M(G/H)$, for $H$ open in $G$, with induction, restriction and conjugation maps 
relating these groups that satisfy a list of axioms (unital, transitivity, associativity, equivariance and the Mackey axiom).
\item A pair of functors from the category of finite discrete $G$-sets to abelian groups, one covariant, 
one contravariant that agree on objects and satisfy a pullback axiom and a coproduct axiom. 
These are sometimes known as categorical Mackey functors.
\item An additive functor from the Burnside category $\spanunstableorbcat$ to abelian groups.
\end{enumerate}

The choice of focusing on the open subgroups (or equally the discrete finite $G$-sets)
matches with the `finite natural Mackey system'' of Bley and Boltje
\cite[Definition 2.1 and Examples 2.2]{BB04} and Thiel \cite[Definition 2.2.12]{Thiel}.
In the case of a finite group, this choice restricts to the usual definitions.
For our purposes we use the last definition.

\begin{definition}
A \emph{Mackey functor} for a profinite group $G$ is an additive functor 
from the Burnside category $\spanunstableorbcat$ to abelian groups. 
We will write $\intmackey{G}$ for the category of Mackey functors and 
additive natural transformations between them. 

A \emph{rational Mackey functor} is an additive functor 
from the Burnside category $\spanunstableorbcat$ to $\bQ$-modules. 
We will write $\mackey{G}$ for the category of rational Mackey functors and 
additive natural transformations between them. 
\end{definition}

General examples of Mackey functors can be found in the references
given at the start of the section. However, there is one class
of rational Mackey functors of particular relevance to this paper. 

\begin{example}\label{ex:homotopygroupmackey}
By Theorem \ref{thm:spantoorbit} and Corollary \ref{cor:rationalmapstensor}, 
if $X$ is a $G$-spectrum then we have a rational $G$-Mackey functor:
\begin{align*}
\piratorbcat &
\longrightarrow \mathbb{Q} \leftmod\\G/H_{+} & 
\longmapsto \pi_0^H\left(X\right) \otimes \bQ \cong [G/H_+,X]^G_{\bQ}
\end{align*}
called the \emph{homotopy group Mackey functor} of $X$.
\end{example}

We write out the preceding definition in terms of notation and apply Theorem \ref{thm:spantoorbit}.
\begin{align*}
\intmackey{G} & = \func_{\abgps}\left(\spanunstableorbcat, \abgps \right)
\cong 
\func_{\abgps}\left(\piorbcat, \abgps \right) \\
\mackey{G} & = \func_{\abgps}\left(\spanunstableorbcat, \bQ \leftmod \right)
\cong 
\func_{\abgps}\left(\piorbcat, \bQ \leftmod \right)
\end{align*}

It remains to relate these category of $G$-Mackey functors to the model category of 
rational $G$-spectra. For this we will need a model structure on chain complexes 
of $G$-Mackey functors.

\begin{lemma}
There is a cofibrantly generated model structure on the category of chain complexes of (rational) $G$-Mackey functors
where a map is a fibration if and only if it is a surjection and the class of weak equivalences is 
the class of homology isomorphisms.
\end{lemma}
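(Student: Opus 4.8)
The plan is to obtain the model structure by transfer along a suitable adjunction from a product of copies of chain complexes of $\bQ$-modules (or abelian groups), exploiting the fact that the Burnside category $\spanunstableorbcat$ has a skeleton with a set of objects — the finite coproducts of $G/H$ for $H$ open in $G$. Since $\mackey{G} = \func_{\abgps}(\spanunstableorbcat, \bQ\leftmod)$ is a category of additive functors from a small (skeletally small) preadditive category to an abelian category, it is itself a bicomplete abelian category with enough projectives; the representable functors $\spanunstableorbcat(A,-)$ (equivalently, the corepresentables after fixing variance) form a set of small projective generators. Hence $\Ch(\mackey{G})$ is a bicomplete abelian category, and the standard projective model structure on chain complexes over a nice abelian category applies.

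Concretely, first I would record that $\mackey{G}$ is a Grothendieck abelian category (it is a functor category into $\bQ\leftmod$, so limits and colimits are computed objectwise, and a set of generators is given by the representables indexed over the skeleton). Second, I would invoke the existence of the projective model structure on unbounded chain complexes in such a category: this is exactly the situation handled by Hovey's work on model structures on chain complexes — see \cite[Theorem 2.3.11]{hovey99} for modules, with the extension to functor categories / Grothendieck categories with enough projectives being standard (alternatively one cites \cite[Section 2.2]{barnesthesis}, where the analogous statement for the finite-group case is spelled out, since the argument only uses that $\mackey{G}$ is such a category). The generating cofibrations are the maps $S^{n-1}P \to D^n P$ and generating trivial cofibrations $0 \to D^n P$, where $P$ ranges over the projective generators and $S^{n-1}$, $D^n$ denote the usual sphere and disk chain complexes; cofibrant generation, and hence the claimed fibrations (levelwise surjections) and weak equivalences (homology isomorphisms), follow from the small object argument. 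Third, I would check the retract and lifting axioms, which are formal once one knows the projectives generate and the category is bicomplete.

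The main obstacle — really the only nontrivial point — is the smallness/generation hypothesis needed to run the small object argument in the \emph{unbounded} (not just bounded-below) chain complex category: one must know that the projective generators are small relative to the relevant subcategory of maps, and that every object admits an epimorphism from a (possibly infinite) coproduct of generators. Both are true here because the indexing category $\spanunstableorbcat$ is essentially small with a set-sized skeleton (using that the open subgroups of a profinite group, up to conjugacy, form a set, and Lemma \ref{lem:spanstofinite} identifies $\spanunstableorbcat$ with a filtered colimit of the finite categories $\spanunstableorbcatN{G/N}$), so the representable functors genuinely form a \emph{set}. Once that is in hand, everything else is the routine application of the cofibrantly generated model category recognition theorem, and I would not grind through those verifications in the paper beyond citing the appropriate reference.
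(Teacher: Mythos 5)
Your proposal is correct and is essentially the paper's argument: the paper also identifies $\Ch(\intmackey{G})$ with $\func_{\abgps}(\spanunstableorbcat, \Ch(\bZ))$ and lifts the cofibrant generation of $\Ch(\bZ)$ to the functor category via the representable functors, which is exactly your ``representables as small projective generators'' transfer. You spell out the smallness and Grothendieck-category points in more detail than the paper, which dispatches the whole thing in one sentence, but the underlying mechanism is the same.
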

\begin{proof}
Since $\Ch(\intmackey{G}) = \func_{\abgps}\left(\spanunstableorbcat, \Ch(\bZ) \right)$
we use the cofibrant generation of $\Ch(\bZ)$ to obtain generating sets for 
$\Ch(\intmackey{G})$ in terms of the representable functors. 
\end{proof}

\subsection{Tilting theory}\label{subsec:tilting}

We give our classification theorem for rational $G$-spectra, where $G$ is a profinite group.

\begin{theorem}\label{thm:gpsectramackey}
For $G$ a profinite group, there is a zig-zag of Quillen equivalences between the model category of rational $G$-spectra
and the model category of chain complexes of rational $G$-Mackey functors. 
\end{theorem}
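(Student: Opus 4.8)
The plan is to apply the Tilting Theorem (Morita theory) of Schwede and Shipley \cite[Theorem 5.1]{ss03stabmodcat} to the model category $\QGspO$, using the set of compact generators $\{\Sigma^\infty G/H_+ : H \opensub G\}$ identified in Corollary \ref{cor:orbitgenerate}. The Tilting Theorem says that a stable model category with a set of compact generators is Quillen equivalent to modules over the endomorphism ringoid spectrum of those generators, provided one can identify the homotopy ringoid with a discrete (Eilenberg--MacLane) object, in which case the module category reduces to chain complexes of functors out of that ringoid. So the strategy has three stages: first verify the hypotheses of the Tilting Theorem; second, identify the homotopy ringoid of the generators with the rational stable orbit category $\piratorbcat$; third, replace $\piratorbcat$ by the Burnside category $\spanunstableorbcat$ using Theorem \ref{thm:spantoorbit}, so that modules over the ringoid become exactly $\Ch(\mackey{G})$.

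First I would recall that $\QGspO$ is a proper, stable, cofibrantly generated model category (established in the previous section) and that $\Ho(\QGspO)$ is generated by the compact objects $\Sigma^\infty G/H_+$ (Corollary \ref{cor:orbitgenerate}); these are precisely the input hypotheses for \cite[Theorem 5.1]{ss03stabmodcat}. The Tilting Theorem then produces a zig-zag of Quillen equivalences between $\QGspO$ and the category of modules over the endomorphism ringoid spectrum $\mcE$ whose objects are the $\Sigma^\infty G/H_+$ and whose morphism spectra are the derived mapping spectra $\map(\Sigma^\infty G/H_+, \Sigma^\infty G/K_+)$ computed in $\QGspO$. The crucial point that makes the model algebraic rather than merely spectral is that these mapping spectra have homotopy concentrated in degree zero: by Theorem \ref{thm:homotopydegreezero} the graded group $[\Sigma^\infty G/H_+, \Sigma^\infty G/K_+]^G_* \otimes \bQ$ is concentrated in degree zero, and by Corollary \ref{cor:rationalmapstensor} this rationalised graded group computes $\pi_*$ of the derived mapping spectrum in $\QGspO$. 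Hence $\mcE$ is rationally equivalent (as a ringoid spectrum) to the Eilenberg--MacLane ringoid on its degree-zero homotopy ringoid, and a formal argument (as in \cite[Example 5.1.2]{ss03stabmodcat} for finite groups) identifies modules over it with chain complexes of additive functors out of the degree-zero homotopy ringoid.

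Next I would identify that degree-zero homotopy ringoid. By definition its objects are the $\Sigma^\infty G/H_+$ and its morphisms are $[\Sigma^\infty G/H_+, \Sigma^\infty G/K_+]^G \otimes \bQ$, which is precisely $\piratorbcat$ of Definition \ref{def:orbitcats}. Applying the equivalence of categories $\psi_G \colon \spanunstableorbcat \to \piorbcat$ of Theorem \ref{thm:spantoorbit} and rationalising (which is exact and sends the Burnside category to its $\bQ$-linearisation), additive functors $\piratorbcat \to \bQ\leftmod$ are identified with additive functors $\spanunstableorbcat \to \bQ\leftmod$, i.e.\ with rational $G$-Mackey functors. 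Passing to chain complexes and using the model structure of the preceding lemma on $\Ch(\mackey{G})$, whose weak equivalences are homology isomorphisms and whose fibrations are surjections, matches the model structure produced by the Tilting Theorem on the module category. This yields the claimed zig-zag of Quillen equivalences between $\QGspO$ and $\Ch(\mackey{G})$.

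The main obstacle is the bookkeeping needed to turn ``homotopy concentrated in degree zero'' into an honest identification of the ringoid spectrum $\mcE$ with an Eilenberg--MacLane ringoid, and then of its module category with $\Ch$ of a functor category with the stated model structure. This is the step where one must be careful that the formal reduction of \cite[Example 5.1.2]{ss03stabmodcat}, written there for finite groups, goes through verbatim for profinite $G$ --- the key facts it needs are exactly that the generators are compact (Corollary \ref{cor:orbitgenerate}) and that the mapping spectra are rationally degreewise discrete (Theorem \ref{thm:homotopydegreezero} and Corollary \ref{cor:rationalmapstensor}), both of which we have. Everything else --- the properness and stability of $\QGspO$, the identification of the morphism groups with $\piratorbcat$, and the translation to Mackey functors via $\psi_G$ --- is either already in hand or routine, so the proof is essentially an application of the Tilting Theorem plus Theorem \ref{thm:spantoorbit}.
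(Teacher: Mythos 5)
Your proposal is correct and follows essentially the same route as the paper: invoke Schwede--Shipley's tilting theorem on the compact generators of Corollary \ref{cor:orbitgenerate}, use Theorem \ref{thm:homotopydegreezero} together with Corollary \ref{cor:rationalmapstensor} to reduce to the degree-zero homotopy ringoid $\piratorbcat$, and then translate to Mackey functors via Theorem \ref{thm:spantoorbit}. The only cosmetic difference is that the paper works with a skeleton of $\piratorbcat$ and isolates the coefficient-change step ($\abgps$ versus $\bQ\leftmod$) as a separate lemma, which you handle implicitly.
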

\begin{proof}
Choose a skeleton $\mcG$ of $\piratorbcat$ (such as the set of finite coproducts of the $G$-sets 
$G/H$ for $H$ an open subgroup of $G$). 
Define $\pi_0(\mcG)$ to be the category whose objects are the elements of $\mcG$
and whose morphisms are given by the abelian groups
\[
[\Sigma^\infty A_+, \Sigma^\infty B_+]^G \otimes \bQ.
\]
The objects of $\mcG$ are a set of compact generators for $\QGspO$ by Corollary \ref{cor:orbitgenerate}
and the set of graded maps between them in $\Ho(\QGspO)$ is 
concentrated in degree zero by Theorem \ref{thm:homotopydegreezero}. 
Thus, we can use \cite[Theorem 5.1.1 and Proposition B.2.1]{ss03stabmodcat} of Schwede and Shipley 
to obtain a zig-zag of Quillen equivalences
\[
\QGspO \simeq 
\Ch\left( \func_{\abgps}\left(\pi_0(\mcG), \abgps \right)  \right).
\]
As $\pi_0(\mcG)$ is a skeleton of $\piratorbcat$, we have the first equivalence of categories below.
The second is Lemma \ref{lem:changeofQ}, which applies 
as Corollary \ref{cor:rationalmapstensor} shows that $\piratorbcat = \piorbcat \otimes \bQ$.
\[
\Ch\left( \func_{\abgps}\left(\pi_0(\mcG), \abgps \right)  \right)
\cong 
\Ch\left( \func_{\abgps}\left(\piratorbcat, \abgps \right)  \right)
\cong 
\Ch\left( \func_{\abgps}\left(\piorbcat, \bQ \leftmod \right)  \right)
\]
Applying Theorem \ref{thm:spantoorbit} gives the final step
\begin{align*}
\Ch(\mackey{G}) & = \Ch(\func_{\abgps}\left(\spanunstableorbcat, \bQ \leftmod \right))
\cong 
\Ch(\func_{\abgps}\left(\piorbcat, \bQ \leftmod \right)). \qedhere
\end{align*}
\end{proof}

\begin{lemma}\label{lem:changeofQ}
Let $\mcC$ be a small additive category. 
The rationalisation functor $i \co \mcC \to \mcC \otimes \bQ$ (defined in the proof)
and the forgetful functor $U \co \bQ \leftmod \to \text{Ab}$ induce equivalences
\[
\xymatrix{
\func_{\abgps}\left(\mcC, \bQ \leftmod \right)
&
\func_{\abgps}\left(\mcC \otimes \bQ, \bQ \leftmod \right)
\ar[l]_{i_*} \ar[r]^{U^*} 
&
\func_{\abgps}\left(\mcC \otimes \bQ, \abgps \right).
} 
\] 
\end{lemma}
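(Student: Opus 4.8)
The plan is to first make the rationalisation functor $i \colon \mcC \to \mcC \otimes \bQ$ precise: $\mcC \otimes \bQ$ has the same objects as $\mcC$ and hom-groups $(\mcC \otimes \bQ)(X,Y) = \mcC(X,Y) \otimes \bQ$, with composition induced bilinearly from that of $\mcC$ and the multiplication on $\bQ$; the functor $i$ is the identity on objects and sends $f$ to $f \otimes 1$. Since $\mcC$ is additive, $\mcC \otimes \bQ$ is additive and $i$ is an additive functor. With this in hand, both $i_*$ (restriction along $i$) and $U^*$ (post-composition with the forgetful functor $U \colon \bQ\leftmod \to \abgps$) are well-defined functors between the displayed functor categories.

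\textbf{The functor $U^*$ is an equivalence.} The key observation is that an additive functor $F \colon \mcC \otimes \bQ \to \abgps$ automatically takes values in $\bQ$-modules: for each pair of objects $X, Y$ the map $F \colon (\mcC\otimes\bQ)(X,Y) \to \Hom_{\abgps}(FX, FY)$ is a group homomorphism out of a $\bQ$-vector space, and in particular $F(\tfrac{1}{n}\id_X)$ is a two-sided inverse to multiplication by $n$ on $FX = F(\id_X)$-image, so multiplication by $n$ is invertible on $FX$, i.e. $FX$ is uniquely divisible, hence a $\bQ$-module; moreover every morphism in $\mcC \otimes \bQ$ then acts $\bQ$-linearly because $F$ commutes with the $\bQ$-action built from $\tfrac{1}{n}\id$. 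Thus $U^*$ is essentially surjective; it is visibly full and faithful since a natural transformation of $\abgps$-valued functors between $\bQ$-module-valued functors is automatically $\bQ$-linear. So $U^*$ is an equivalence. (One should note that $\func_{\abgps}$ of an additive category into $\abgps$ lands, on the nose or up to equivalence, in additive functors — this matches how the paper uses these categories in Theorem \ref{thm:gpsectramackey}.)

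\textbf{The functor $i_*$ is an equivalence.} Here I would exhibit a quasi-inverse directly. Given an additive functor $F \colon \mcC \to \bQ\leftmod$, define $\widehat{F} \colon \mcC\otimes\bQ \to \bQ\leftmod$ on objects by $\widehat{F}(X) = F(X)$ and on a morphism $\sum_j f_j \otimes q_j \in (\mcC\otimes\bQ)(X,Y)$ by $\sum_j q_j \cdot F(f_j)$; this is well-defined (it respects the tensor relations because each $F(f_j)$ is a $\bQ$-linear map and the assignment $f \otimes q \mapsto q F(f)$ is $\bZ$-bilinear), functorial, and additive. Clearly $i_*\widehat{F} = \widehat{F} \circ i = F$, and conversely for any additive $G \colon \mcC\otimes\bQ \to \bQ\leftmod$ one has $\widehat{G \circ i} = G$ since $G$ is $\bQ$-linear on hom-groups, so $G(f\otimes q) = q\,G(f\otimes 1) = q\,G(i(f))$. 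This assignment is compatible with natural transformations, giving the inverse equivalence.

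\textbf{Main obstacle.} The routine calculations (well-definedness of $\widehat{F}$ on the tensor product, functoriality, additivity) are the bulk of the work but are standard; the one genuinely substantive point — and the place I would be most careful — is the claim that an additive functor into $\abgps$ out of a $\bQ$-linear category lands automatically in $\bQ$-modules and acts $\bQ$-linearly. This is what makes $U^*$ an equivalence rather than merely fully faithful, and it is the hinge that lets the paper replace $\bQ$-module-valued Mackey functors by $\abgps$-valued ones (and vice versa) in the tilting-theory step. Everything else is formal bookkeeping with the universal property of $\mathcal{C} \otimes \bQ$.
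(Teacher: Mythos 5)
Your proof is correct and follows essentially the same route as the paper: the paper's (much terser) argument also rests on the single observation that in each case the functor in question must take values in $\bQ$-modules, which is exactly the hinge you identify for $U^*$ and which, via the universal property of $\mcC(X,Y)\otimes\bQ$, gives the quasi-inverse for $i_*$. You have simply supplied the routine details that the paper leaves implicit.
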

\begin{proof}
A small additive category $\mcC$ has a rationalisation $\mcC \otimes \bQ$, 
this category has the same objects and morphisms given by 
\[
(\mcC \otimes \bQ)(c,c') = \mcC(c,c') \otimes \bQ.
\]
Composition is induced from that of $\mcC$ and  $\mcC \otimes \bQ$ is an additive category.
Moreover, there is an additive functor from $\mcC$ to its rationalisation 
$i \co \mcC \to \mcC \otimes \bQ$. 

The functors are equivalences as in each case the functor must take values in $\bQ$-modules. 
\end{proof}

\begin{example}
Let $A$ be an finite discrete $G$-set. 
Let $M_A$ the homotopy group Mackey functor 
of $\Sigma^\infty A_+$ from Example \ref{ex:homotopygroupmackey}.
Then $M_A$ is the representable functor given by 
\[
\spanunstableorbcat ( -, A) \otimes \bQ.
\]
\end{example}

We recall the notion of Weyl-$G$-sheaves over the space of closed subgroups of $G$
from \cite[Section 2]{BSmackey} and \cite[Section 10]{BSsheaves}.
For $G$ a profinite group, let $\sub G$ denote the set of closed subgroups of $G$.
We topologise this set as the limit of finite discrete spaces 
\[
\sub G \coloneqq \underset{N \opennormalsub G}{\lim}  \sub (G/N)
\]
using the maps which send $K \in \sub G$ to $KN/N \in \sub (G/N)$.  

\begin{definition}\label{defn:eqsheafRmod}
A \emph{$G$-equivariant sheaf} of $\bQ$-modules over $\sub G$ is a map
$p \co E \to \sub G$ such that:
\begin{enumerate}
\item \label{item:sheafeq} $p$ is a $G$-equivariant map $p \colon E\lra \sub G$ of spaces with continuous $G$-actions,
\item \label{item:sheafab} $(E,p)$ is a sheaf space of $\bQ$-modules,
\item \label{item:sheafcomb} each map $g \co p^{-1} (x) \lra p^{-1} (g x)$ 
is a map of $\bQ$-modules for every $x\in \sub G,$ $g\in G$.
\end{enumerate}
We will write this as either the pair $(E,p)$ or simply as $E$.
A map $f \co (E,p) \to (E',p')$ of $G$-sheaves of $\bQ$-modules over $\sub G$ is a 
$G$-equivariant map $f \co E \to E'$ such that $p'f=p$ 
and $f_x \co E_x \to E'_x$ is a map of $\bQ$-modules for each $x \in \sub G$.  
\end{definition}

\begin{definition}\label{defn:weylsheaf}
A \emph{rational Weyl-$G$-sheaf} $E$ is a $G$-sheaf of $\bQ$-modules
over $\sub G$ such that $E_K$ is $K$-fixed and hence 
is a discrete $\bQ[W_G(K)]$-module. 
A map of Weyl-$G$-sheaves is a map of $G$-sheaves of $\bQ$ modules over $\sub G$.
We write this category as $\Rweylsheaf{G}{\bQ}$ 
\end{definition}

\begin{theorem}[{\cite[Theorem A]{BSmackey}}]\label{thm:equivalencemain}
If $G$ is a profinite group then the category of rational $G$-Mackey functors is equivalent to the 
category of rational Weyl-$G$-sheaves over $\sub G$. Furthermore, this is an exact equivalence.
\end{theorem}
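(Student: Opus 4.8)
The plan is to construct an explicit pair of additive functors $\Phi \co \mackey{G} \to \Rweylsheaf{G}{\bQ}$ and $\Psi$ in the other direction and to prove they are mutually inverse equivalences by reducing everything --- objects, $\Hom$-modules, and the unit/counit isomorphisms --- to the known case of finite groups; exactness of the equivalence is then automatic. The base case is classical: for finite $G$ the rational Burnside ring splits as $\ratburnsidering{G} \cong \prod_{(H)} \bQ$ with primitive idempotents $e_{(H)}^G$ indexed by conjugacy classes of subgroups (Gluck, Yoshida), and the Th\'evenaz--Webb decomposition $\mackey{G} \simeq \prod_{(H)} \bQ[W_G(H)]\leftmod$ sends a rational $G$-Mackey functor $M$ to the family of Brauer quotients $\overline{M}(H) = e_{(H)}^G M(G/H)$, each a $\bQ[W_G(H)]$-module. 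On the sheaf side, when $G$ is finite $\sub G$ is a finite discrete $G$-space, so a $G$-sheaf of $\bQ$-modules over it is simply a $G$-equivariant family of stalks; the $G$-orbits are the conjugacy classes $(H)$ with isotropy $N_G(H)$, and the Weyl-fixed condition makes the stalk at $H$ a $\bQ[W_G(H)]$-module, so rational Weyl-$G$-sheaves also identify with $\prod_{(H)} \bQ[W_G(H)]\leftmod$. These two identifications agree and are compatible with inflation along $G/N' \to G/N$, which proves the theorem for finite $G$.

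For profinite $G$ I would define $\Phi(M)$ to be the sheaf over $\sub G = \underset{N \opennormalsub G}{\lim}\, \sub(G/N)$ whose stalk at a closed subgroup $K$ is the filtered colimit $\underset{K \leqslant H \opensub G}{\colim}\, \overline{M}(H)$ of the finite-level Brauer quotients, topologised as an \'etale space by the sections pulled back from the finite quotients $\sub(G/N)$ and carrying the evident continuous $G$-action; one checks that $\Phi(M)$ is a rational Weyl-$G$-sheaf, using the description of $G$-equivariant sheaves over the profinite space $\sub G$ in terms of finite levels from \cite{BSsheaves}. Conversely $\Psi(E)$ is the Mackey functor with $\Psi(E)(G/H)$ the module of equivariant sections of $E$ over the closed $G$-subspace $\{K \in \sub G : K \subconj H\}$, the restriction, conjugation, and transfer maps being induced by the corresponding maps of section modules (the precise formula is that of \cite{BSmackey}). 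That $\Phi$ and $\Psi$ are well defined, functorial, and inverse to one another --- $\Psi\Phi \cong \id$ and $\Phi\Psi \cong \id$ --- I would verify by checking compatibility with restriction along the inflation functors $\spanunstableorbcatN{G/N'} \to \spanunstableorbcatN{G/N}$ and along the projections $\sub G \to \sub(G/N)$, and then passing to the filtered colimit: by Lemma \ref{lem:spanstofinite} every finite discrete $G$-set and every span between such is inflated from some $G/N$, so the statements for $G$ follow from the finite-group statements for the quotients $G/N$ together with the presentation $\spanunstableorbcat = \underset{N}{\colim}\, \spanunstableorbcatN{G/N}$.

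Exactness then requires no extra work. Kernels and cokernels of $G$-sheaves of $\bQ$-modules are computed stalkwise, and the Weyl-fixed condition --- the stalk at $K$ being a $\bQ[W_G(K)]$-module with trivial $K$-action --- is preserved under sub- and quotient-modules, so rational Weyl-$G$-sheaves form an abelian category; an equivalence between abelian categories is automatically additive and hence preserves kernels and cokernels, so it is an exact equivalence. Equivalently, $\Phi$ and $\Psi$ are assembled from filtered colimits, stalk functors, idempotent projections, and equivariant-section functors, each of which is exact on $\bQ$-modules.

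The main obstacle is the topological bookkeeping in the profinite limit: one must make precise, and verify, that the stalkwise prescription for $\Phi(M)$ really does assemble into an honest sheaf over the Stone space $\sub G$ carrying a continuous $G$-action, and dually that an arbitrary rational Weyl-$G$-sheaf is reconstructed from its restrictions to the finite quotient spaces $\sub(G/N)$. In other words, the real work is a clean description of how $G$-equivariant sheaves over $\sub G$ are controlled by finite levels, so that the colimit presentation of the Burnside category lines up with the inverse-limit presentation $\sub G = \underset{N}{\lim}\, \sub(G/N)$. Once that dictionary is in place, the algebra at each finite level is entirely classical.
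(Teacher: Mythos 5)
The paper does not prove this statement: Theorem \ref{thm:equivalencemain} is imported verbatim from \cite[Theorem A]{BSmackey}, and the present paper only cites it, so there is no internal proof to compare your attempt against. Judged against the strategy of that reference, your outline is the right one: the finite-group case via the rational Burnside-ring idempotents and the Th\'evenaz--Webb splitting $\mackey{G} \simeq \prod_{(H)} \bQ[W_G(H)]\leftmod$, the stalk of $\Phi(M)$ at a closed subgroup $K$ taken to be the filtered colimit of Brauer quotients over open $H \geqslant K$, the inverse functor given by equivariant sections over the subgroups subconjugate to $H$, and the observation that exactness is automatic for any equivalence of abelian categories are all correct and are essentially how the cited proof is organised.

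There is, however, a genuine gap where you lean on Lemma \ref{lem:spanstofinite}. The presentation $\spanunstableorbcat \cong \colim_N \spanunstableorbcatN{G/N}$ does identify $\mackey{G}$ with the (pseudo)limit of the categories $\mackey{G/N}$ along restriction-of-inflation, but the sheaf side does \emph{not} decompose correspondingly: the category of $G$-equivariant sheaves over $\sub G = \lim_N \sub(G/N)$ is strictly larger than the limit of the categories of $G/N$-sheaves over the finite discrete spaces $\sub(G/N)$, because a sheaf over a profinite space is not determined by a compatible family of finite-level sheaves (its stalks are colimits and its sections over clopen sets only locally come from finite levels). So the statement for profinite $G$ does not ``follow from the finite-group statements for the quotients $G/N$'' by a formal colimit/limit interchange. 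The content you defer as ``topological bookkeeping'' --- that the stalkwise colimits $\colim_{K \leqslant H} \overline{M}(H)$ assemble into an honest \'etale space with continuous $G$-action, that the result satisfies the Weyl condition, and that an arbitrary rational Weyl-$G$-sheaf is recovered from its modules of equivariant sections --- is precisely the hard part of \cite[Theorem A]{BSmackey}, not a routine verification, and your proposal identifies it without carrying it out.
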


\begin{corollary}\label{cor:gpsectrasheaf}
For $G$ a profinite group, there is a zig-zag of Quillen equivalences between the model category of rational $G$-spectra
and the model category of chain complexes of rational Weyl-$G$-sheaves. 
\end{corollary}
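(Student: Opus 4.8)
The plan is to chain together the two Quillen equivalences and the equivalence of abelian categories that have already been established. First I would invoke Theorem \ref{thm:gpsectramackey}, which provides a zig-zag of Quillen equivalences between the model category of rational $G$-spectra and $\Ch(\mackey{G})$, the model category of chain complexes of rational $G$-Mackey functors (with the projective-type model structure whose weak equivalences are homology isomorphisms and whose fibrations are surjections). Next I would bring in Theorem \ref{thm:equivalencemain}, the main result of \cite{BSmackey}, which asserts that $\mackey{G}$ is equivalent to $\weylsheaf{G}$ as abelian categories, and crucially that this equivalence is \emph{exact}.

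The one nontrivial point is that an abstract equivalence of abelian categories does not automatically upgrade to a Quillen equivalence on chain complexes; one needs the equivalence to interact correctly with the model structures. Here this is painless: because the equivalence $\mackey{G} \simeq \weylsheaf{G}$ is exact (in both directions, being an equivalence), it preserves homology of chain complexes and hence preserves homology isomorphisms; it is also additive and so preserves surjections (epimorphisms). Therefore applying it levelwise to chain complexes induces an equivalence of categories $\Ch(\mackey{G}) \simeq \Ch(\weylsheaf{G})$ that is an isomorphism of model categories — it matches weak equivalences with weak equivalences and fibrations with fibrations, hence also cofibrations — and in particular a (trivial) Quillen equivalence. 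One should also note that $\weylsheaf{G}$ is a suitable abelian category for the projective model structure on chain complexes to exist, but this follows from the corresponding fact for $\mackey{G}$ transported across the equivalence, or directly.

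Composing the zig-zag of Theorem \ref{thm:gpsectramackey} with this last equivalence yields the desired zig-zag of Quillen equivalences between rational $G$-spectra and $\Ch(\weylsheaf{G})$. I do not anticipate a genuine obstacle here: the substantive work is entirely contained in Theorems \ref{thm:gpsectramackey} and \ref{thm:equivalencemain}, and the only thing to check is the routine observation that an exact equivalence of abelian categories induces an equivalence of the associated chain-complex model categories. If anything requires a word of care, it is simply recording that the model structure on $\Ch(\weylsheaf{G})$ is the one transported from $\Ch(\mackey{G})$, so that the statement is unambiguous.
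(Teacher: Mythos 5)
Your proposal is correct and follows exactly the route the paper intends: the corollary is stated without an explicit proof, being the immediate combination of the zig-zag from Theorem \ref{thm:gpsectramackey} with the exact equivalence of Theorem \ref{thm:equivalencemain}. Your observation that the exact equivalence of abelian categories transports the chain-complex model structure (matching weak equivalences and fibrations, hence giving a Quillen equivalence) is precisely the routine verification the paper leaves implicit.
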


\section{Homological dimension of equivariant sheaves}\label{sec:injdim}
Using the Weyl-$G$-sheaf description of rational $G$-spectra we can calculate the 
homological dimension (also known as the injective dimension) of the algebraic model. 
This gives an indication of the homological 
complexity of the algebraic model. It is already known that the algebraic model 
in the case of finite groups has homological dimension zero (we will recover this result).
The algebraic model in the case of an $r$-torus $(S^1)^{\times r}$ is $r$, as shown in 
Greenlees \cite[Theorem 8.1]{tnq2}. 

We prove that the Cantor--Bendixson rank (Definition \ref{def:CBrank}) of the space $\sub G$
will determine the homological dimension of rational (Weyl) $G$-sheaves on $\sub G$.
This is an equivariant generalisation of the results of the second author, \cite{sugrue19}. 

\subsection{Cantor--Bendixson rank}

We start with the basic definitions, see Gartside and Smith, \cite{GartSmith10} and \cite{GartSmith10count}.
\begin{definition}
For a topological space $X$ we define the \emph{Cantor--Bendixson process} on $X$. 
Denote by $X^{\prime}$ the set of all isolated points of $X$. 
\begin{enumerate}
\item Let $X^{(0)}=X$ and $X^{(1)}=X\setminus X^{\prime}$ have the subspace topology with respect to $X$.
\item For successor ordinals suppose we have $X^{(\alpha)}$ for an ordinal $\alpha$, we define 
\[
X^{(\alpha+1)}=X^{(\alpha)}\setminus {X^{(\alpha)}}^{\prime}.
\]
\item If $\lambda$ is a limit ordinal we define 
\[
X^{(\lambda)}=\underset{\alpha<\lambda}{\bigcap}X^{(\alpha)}.
\]
\end{enumerate}
\end{definition}
Every Hausdorff topological space $X$ has a minimal ordinal $\alpha$ such that $X^{(\alpha)}=X^{(\lambda)}$ 
for all $\lambda\geq \alpha$, see \cite[Lemma 2.7]{GartSmith10}.

\begin{definition}
For $X$ a Hausdorff topological space, the \emph{Cantor--Bendixson rank} of $X$, 
written $\rank_{CB}(X)$, is the minimal ordinal $\alpha$ 
such that $X^{(\alpha)}=X^{(\lambda)}$ for all $\lambda\geq \alpha$.

A topological space $X$ is called \textbf{perfect} if it has no isolated points, whereupon 
$\rank_{CB}(X) = 0$.
\end{definition}

\begin{remark}
The definition given above agrees with that of Gartside and Smith, \cite{GartSmith10count}.
The convention of Dickmann, Schwartz and Tressl, \cite[Definition 4.3.1]{DSTspectral}, 
is to take one less than the rank as defined above.
\end{remark}

There are two ways that the Cantor--Bendixson process can stabilise, by reaching the empty set 
or a perfect subspace. 

\begin{definition}\label{def:CBrank}
If $X$ is a Hausdorff space with Cantor--Bendixson rank $\lambda$, then we define 
the \emph{perfect hull} of $X$ to be the subspace $X^{(\lambda)}$, written $X_H$.

We write $X_S$ for the complement $X \setminus X_H$ and call it the 
\emph{scattered part} of $X$.
The space $X$ is said to be \emph{scattered} if $X_H= \emptyset$. 
\end{definition}

\begin{example}\label{ex:rankexamples}
The Cantor--Bendixson rank of the empty set is zero
and the Cantor--Bendixson rank of a non-empty discrete space is 1 as every point
is isolated.

The space $\sub \adic{p}$ of closed subgroups of the $p$-adics is the subspace of $\bR$ consisting of the points
\[
\left\lbrace 1/n \mid n\in\mathbb{N}\right\rbrace\bigcup\left\lbrace 0\right\rbrace.
\] 
The isolated points are those of the form $1/n$, which are removed in the first stage of 
the Cantor--Bendixson process, thus $(\sub \adic{p})^{(1)} = \{ 0\}$.
For $k>1$, $(\sub \adic{p})^{(k)} = \emptyset$, so $\rank_{CB}(\sub \adic{p})=2$.
\end{example}

\begin{definition}
If $X$ is a space and $x\in X_S$, we define the \emph{height} of $x$ 
denoted $\text{ht}(X,x)$, to be the  ordinal $\kappa$ such that $x\in X^{(\kappa)}$ 
but $x\notin X^{(\kappa+1)}$. 
We denote this by $\text{ht}(x)$ when the background space $X$ is understood.
\end{definition}

We may rephrase the definitions to see that a point $x$ of height $k>0$ is 
a limit of points of height $k-1$. Consequently, an open neighbourhood of $x$
contains infinitely many points of height $j$ for each $j < k$. 


\begin{lemma}\label{lem:equivariantcantor}
Let $X$ be a topological space with an action of a topological group $G$. 
The height of points of $X$ is invariant under the action of $G$.

If $x \in X$ has height $k>0$, then every neighbourhood of $x$ 
contains infinitely many points from orbits other than $Gx$. 
\end{lemma}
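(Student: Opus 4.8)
The statement has two parts. For the first, the claim is that $\mathrm{ht}(gx) = \mathrm{ht}(x)$ for all $g \in G$ and $x \in X_S$. The plan is to show by transfinite induction on the ordinal $\alpha$ that $g \cdot X^{(\alpha)} = X^{(\alpha)}$, i.e.\ each Cantor--Bendixson derivative is $G$-stable. The key observation is that for any fixed $g$, left translation $\ell_g \colon X \to X$ is a homeomorphism (the action is continuous and $\ell_{g^{-1}}$ is a continuous inverse), so $\ell_g$ carries isolated points to isolated points and hence $\ell_g(X^{(\alpha+1)}) = \ell_g(X^{(\alpha)} \setminus (X^{(\alpha)})') = X^{(\alpha)} \setminus (X^{(\alpha)})' = X^{(\alpha+1)}$ using the successor step and the inductive hypothesis that $\ell_g(X^{(\alpha)}) = X^{(\alpha)}$ and that $\ell_g$ restricted to $X^{(\alpha)}$ is a self-homeomorphism. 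Limit ordinals are immediate since $\ell_g$ commutes with intersections. Once all derivatives are $G$-stable, $x \in X^{(\kappa)} \setminus X^{(\kappa+1)}$ if and only if $gx \in X^{(\kappa)} \setminus X^{(\kappa+1)}$, giving $\mathrm{ht}(gx) = \mathrm{ht}(x)$.

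For the second part, suppose $x$ has height $k > 0$ and let $U$ be an open neighbourhood of $x$. As noted in the paragraph preceding the lemma, $x$ is a limit of points of height $k-1$, so $U$ contains infinitely many points $y$ with $\mathrm{ht}(y) = k-1$. The point is that these cannot all lie in the single orbit $Gx$: by the first part of the lemma every point of $Gx$ has height $k$, not $k-1$. Hence each of these infinitely many height-$(k-1)$ points lies in $X \setminus Gx$, and $U$ meets orbits other than $Gx$ in infinitely many points. (Strictly, one should also confirm $U$ does contain infinitely many such points rather than just one, which follows from iterating: a height-$k$ point with $k > 0$ has in any neighbourhood infinitely many points of height $< k$, as already recorded; alternatively argue that the set of height-$(k-1)$ points near $x$ is infinite directly from the fact that $x \in \overline{(X^{(k-1)})'}$ within $X^{(k-1)}$ and $X$ is Hausdorff.)

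The only real subtlety — and the step I would be most careful about — is the inductive bookkeeping in the first part: one must track not just that $\ell_g(X^{(\alpha)}) = X^{(\alpha)}$ but that $\ell_g|_{X^{(\alpha)}}$ is a homeomorphism of $X^{(\alpha)}$ with the subspace topology, so that ``isolated point of $X^{(\alpha)}$'' is a notion $\ell_g$ preserves. This is automatic once the set equality $\ell_g(X^{(\alpha)}) = X^{(\alpha)}$ is in hand, since a bijective restriction of a homeomorphism to a subspace it preserves is again a homeomorphism, but it is worth stating explicitly. Everything else is routine: the successor and limit steps are formal, and the second part is an immediate consequence of the first combined with the already-established local structure of scattered points.
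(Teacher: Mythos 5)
Your proposal is correct and takes essentially the same route as the paper: the paper's own proof is just the observation that ``$G$ acts through homeomorphisms'' (of which your transfinite induction on the Cantor--Bendixson derivatives is the careful expansion) combined with the remark preceding the lemma that a point of height $k>0$ is a limit of points of height $k-1$. Your added bookkeeping about $\ell_g|_{X^{(\alpha)}}$ being a self-homeomorphism of the subspace is exactly the detail the paper leaves implicit, and your second part matches the paper's one-line deduction.
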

\begin{proof}
As $G$ acts through homeomorphisms, the first statement holds. 
The second statement follows from our preceding discussion and the first statement. 
\end{proof}

\subsection{Equivariant Godement resolutions}

We recap equivariant Godement resolutions from \cite[Section 9]{BSsheaves}.
The key change from the non-equivariant case is the use orbits in place of points.

\begin{definition}\label{def:godestage}
Let $p \colon E \to X$ be a $G$-equivariant sheaf of $\bQ$-modules 
over a $G$-space $X$, for $G$ a profinite group.
Define a $G$-sheaf 
\[
I^0(E) = \prod_{A} i^*_A E_{|A}
\]
with the product taken over the $G$-orbits of $X$ and $i^*_A$ the extension by zero functor 
induced by the map $A \to X$. 

The restriction--extension adjunction induces morphisms $E \lra i^*_A E_{|A}$, 
which combine to a monomorphism
\[
\delta_E \colon E \lra I^0(E).
\]
\end{definition}

\begin{lemma}
Let $p \colon E \to X$ be a $G$-equivariant sheaf of $\bQ$-modules 
over a $G$-space $X$, for $G$ a profinite group.
The $G$-sheaf $I^0(E)$ is injective.
\end{lemma}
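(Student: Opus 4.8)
The plan is to reduce the injectivity of $I^0(E)$ to the injectivity of the individual factors $i^*_A E_{|A}$ and then to classical facts about injective sheaves over a point (equivalently, injective modules). First I would observe that a product of injective objects in an abelian category with enough injectives is injective, so it suffices to show that each $i^*_A E_{|A}$ is an injective object in the category of $G$-equivariant sheaves of $\bQ$-modules over $X$. Here $A$ is a single $G$-orbit, say $A \cong G/H$ for an open subgroup $H$ (or, in the non-discrete case, a closed subgroup with $A$ a homogeneous $G$-space), and $i_A \colon A \to X$ is the inclusion, with $i^*_A$ denoting extension by zero.

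The key step is the adjunction: extension by zero $i^*_A$ is right adjoint to the restriction functor $(-)_{|A}$ from $G$-sheaves over $X$ to $G$-sheaves over $A$. Since restriction to an orbit is an exact functor (restriction to a subspace is exact for sheaves, and it clearly preserves the $G$-equivariant module structure), its right adjoint $i^*_A$ preserves injectives. So I would be reduced to showing that $E_{|A}$ is injective as a $G$-equivariant sheaf of $\bQ$-modules over the single orbit $A = G/H$. But $G$-equivariant sheaves over $G/H$ are equivalent to discrete $\bQ[W]$-modules (where $W$ is the relevant Weyl group / stabiliser data, exactly as in Definition \ref{defn:weylsheaf}) — more precisely, the stalk at the basepoint with its residual action. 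Wait — $E_{|A}$ need not itself be injective; rather, what we need is that $I^0(E)$ is built from an injective object, so the construction should actually replace $E_{|A}$ by an injective hull or an injective module containing it. Re-reading Definition \ref{def:godestage}: the factor is literally $i^*_A E_{|A}$, so for $I^0(E)$ to be injective we must already know $E_{|A}$ is injective over $A$ — which is false in general. The resolution is that the standard Godement construction takes $i^*_A$ of an \emph{injective} module containing the stalk; so I would first check the precise definition in \cite[Section 9]{BSsheaves}, where $I^0(E)$ presumably uses the stalks $E_x$ embedded into injective $\bQ$-modules (over a point every $\bQ$-module is injective since $\bQ$ is a field!). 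That is the crucial simplification in the rational setting.

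So the corrected argument is: over a single orbit $A = G/H$, a $G$-sheaf of $\bQ$-modules is a discrete $\bQ[W_G(H)]$-module $M$ via its stalk. The skyscraper-type sheaf on the orbit associated to $M$ is $i^*_A E_{|A}$, and one shows directly that $\Hom$ into it is exact: a map into $i^*_A E_{|A}$ from a $G$-sheaf $F$ over $X$ is the same, by adjunction, as a map $F_{|A} \to E_{|A}$ of $\bQ[W_G(H)]$-modules, and since $\bQ$ is a field every $\bQ$-module — in particular every discrete $\bQ[W]$-module viewed merely as a $\bQ$-module, but we need the $W$-equivariant statement — hmm, discrete $\bQ[W]$-modules for $W$ profinite are \emph{not} all injective. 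Here is where the real content sits, and it is the main obstacle: one needs that the coinduced/coextended module $\prod$ over the orbit lands in an injective $\bQ[W_G(H)]$-module, using that $W_G(H)$ acts with open stabilisers and that coinduction from the trivial subgroup preserves injectivity (Frobenius reciprocity / the fact that $\bQ[W]$-mod has enough injectives built by coinducing injective $\bQ$-modules). I would therefore structure the proof as: (i) product of injectives is injective; (ii) $i^*_A$ (extension by zero along an orbit inclusion) is right adjoint to an exact functor, hence preserves injectives; (iii) reduce to a single orbit and cite or reprove that the Godement factor over an orbit is an injective discrete $\bQ[W_G(H)]$-module because it is coinduced from an injective $\bQ$-module and $\bQ$ is a field. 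Step (iii), the equivariant/coinduction bookkeeping over the profinite Weyl group, is the delicate point; steps (i) and (ii) are formal.
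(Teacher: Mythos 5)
Your steps (i) and (ii)---a product of injectives is injective, and the skyscraper/extension-by-zero construction along an orbit preserves injectives because it is right adjoint to an exact (stalk or restriction) functor---match the structure of the paper's proof. The gap is in step (iii), at exactly the point you flag and then resolve incorrectly. You assert that ``discrete $\bQ[W]$-modules for $W$ profinite are \emph{not} all injective'' and conclude that the definition of $I^0(E)$ must secretly replace $E_{|A}$ by an injective hull, or by a module coinduced from the trivial subgroup. Neither is the case. The factor really is $i^*_A E_{|A}$, and $E_{|A}$ really is injective: the paper identifies $E_{|A}$ with the equivariant skyscraper sheaf $G \times_{\stab_G(x_A)} E_{x_A}$, i.e.\ the right adjoint of the exact stalk-at-$x_A$ functor applied to the discrete $\bQ[\stab_G(x_A)]$-module $E_{x_A}$, and then invokes the result of Castellano and Weigel that \emph{every} object of the category of discrete $\bQ[W]$-modules is injective when $W$ is profinite and the coefficients are $\bQ$. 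This is a profinite Maschke-type statement (for finite $W$ it is semisimplicity of $\bQ[W]$), it is precisely the ``crucial simplification in the rational setting'' you were looking for, and it is the entire nonformal content of the lemma.

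Concretely, your proposed patch does not work as a substitute: coinducing an injective $\bQ$-module from the trivial subgroup of $W_G(H)$ produces a different object from $E_{|A}$ (whose stalk is $E_{x_A}$ with its given stabiliser action), so it would not yield the canonical monomorphism $\delta_E \colon E \to I^0(E)$ on which the Godement resolution depends. Once you accept the Castellano--Weigel injectivity of all discrete rational modules over a profinite group, your original reduction goes through verbatim and agrees with the paper's argument.
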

\begin{proof}
We prove that $I^0(E)$ is the product of injective sheaves. 
For each orbit $A$ of $X$, pick an element $x_A \in A$. 
By \cite[Lemma 8.3]{BSsheaves} there is an isomorphism of sheaves over $A$ 
\begin{align*}
G\underset{\stab_G(x_A)}{\times}E_{x_A} \cong E_{|_{A}}.
\end{align*}
The left hand sheaf is known as the equivariant 
skyscraper sheaf of $E_{x_0}$ at $x_0$. It can be viewed as part of an adjunction
with the left adjoint being taking the stalk at $x_0$. As this left adjoint preserves 
monomorphisms, the equivariant skyscraper sheaf construction preserves injective objects. 

The result is completed by Castellano and Weigel \cite[Proposition 3.1]{CW16}, which  
states that every object of the category of discrete $\bQ[\stab_G(x_0)]$-modules
is injective. 
\end{proof}

\begin{remark}
The result \cite[Proposition 3.1]{CW16} also fixes an omission in work of 
the first author, \cite[Lemma 6.2]{barnespadic}, which implicitly assumes that 
all discrete $\bQ\left[\adic{p}\right]$-modules are injective.
\end{remark}

Iterating this construction $I^0$ gives an injective  resolution.

\begin{definition}\label{godeequi}\index{equivariant Godement resolution}
Let $G$ be a profinite group. If $E$ is a $G$-sheaf of $R$-modules over a profinite $G$-space we define the 
\emph{equivariant Godement resolution} as follows.
\begin{align*}
\xymatrix{
0\ar[r]
&
E\ar[r]^-{\delta_E}
&
I^0(E) \ar[d]^p \ar[r]
&
I^0(\coker\delta_E)=I^1(E)\ar[r]
&
\ldots\\&&
\coker\delta_E \ar[ur]_{\delta_{\coker\delta_E}}
}
\end{align*}
\end{definition}

We now connect the Cantor--Bendixson rank of a $G$-space $X$ to the length of the 
equivariant Godement resolution of rational $G$-sheaves over $X$.

\begin{theorem}
Let $E$ be a rational $G$-sheaf over a profinite $G$-space $X$, for $G$ a profinite group.
For $n \in \bN$ and $x \in X$, the stalk $I^n(E)_x$ is zero unless the height of $x$ is at least $n$. 
\end{theorem}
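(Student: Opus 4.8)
The plan is to prove the statement by induction on $n$, exploiting the key feature of the equivariant Godement construction: the stage $I^n(E)$ depends only on the stalks of the cokernel sheaf at various orbits, and an orbit's stalk in the skyscraper factor survives only over that orbit. For the base case $n=0$, there is nothing to prove, since every point has height at least $0$ (the condition is vacuous). For the inductive step, suppose we know that $I^{n-1}(E)_x = 0$ whenever $\height(x) < n-1$. We must analyse $I^n(E) = I^0(\coker \delta_{I^{n-1}(E)} \to \text{(previous cokernel)})$, or more precisely $I^n(E) = I^0(C^{n-1})$ where $C^{n-1} = \coker(\delta_{C^{n-2}})$ is the $(n-1)$-st cokernel in the Godement resolution, with $C^{-1} = E$.

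The crux is then to understand the stalk of $I^0(C^{n-1})$ at a point $x$ of height $k$. By Definition \ref{def:godestage}, $I^0(C^{n-1}) = \prod_A i^*_A (C^{n-1})_{|A}$ over the $G$-orbits $A$ of $X$, and the stalk at $x$ of the factor $i^*_A (C^{n-1})_{|A}$ is $(C^{n-1})_x$ if $x \in A$ and zero otherwise — because extension by zero from $A \to X$ has zero stalk away from $A$. Hence $I^0(C^{n-1})_x = (C^{n-1})_x$ (the factor for the orbit $A = Gx$ is the only surviving one, using that the height of $x$ matches the height of all points in its orbit by Lemma \ref{lem:equivariantcantor}). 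So the problem reduces to showing that $(C^{n-1})_x = 0$ whenever $\height(x) < n$, i.e.\ $k \le n-1$.

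Now I would use two facts. First, taking stalks at $x$ is exact (it is a filtered colimit), so from the short exact sequence $0 \to C^{n-2} \to I^{n-1}(E) \to C^{n-1} \to 0$ we get $(C^{n-1})_x$ as the cokernel of $(C^{n-2})_x \to I^{n-1}(E)_x = I^0(C^{n-2})_x = (C^{n-2})_x$. But this map is precisely the stalk at $x$ of $\delta_{C^{n-2}}$, which is the unit of the restriction--extension adjunction, and on stalks this unit is an \emph{isomorphism} onto the $Gx$-orbit factor: $\delta$ is built exactly so that $E \to i^*_{Gx}E_{|Gx}$ is the identity on the stalk at $x$. Therefore $(C^{n-1})_x = \coker\big((C^{n-2})_x \xrightarrow{\,\cong\,} (C^{n-2})_x\big) = 0$ for \emph{every} $x$ — wait, that would be too strong, so the subtlety must be that $\delta_{C^{n-2}}$ on the stalk at $x$ lands only in the $Gx$-factor of the \emph{full product}, not that it is surjective onto all of $I^0(C^{n-2})_x$; but we just argued $I^0(C^{n-2})_x$ equals only that one factor. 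The resolution of this apparent paradox — and the real content — is that $\delta$ restricted to a point of height $\ge 1$ need not have the same behaviour because $C^{n-2}$ itself may be nonzero only at higher-height points; the precise bookkeeping of which orbit-factors contribute requires the inductive hypothesis that $(C^{n-2})_x = I^{n-1}(E)$-ingredients vanish for $\height(x) < n-1$, combined with the local structure near $x$: an open neighbourhood of a height-$k$ point contains points of all smaller heights (the remark after the height definition), so the cokernel $C^{n-1}$, which measures the failure of $C^{n-2}$ to be a product of skyscrapers, is supported on points that are limits within the support of $C^{n-2}$, pushing the support up by one in height at each stage.

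The main obstacle I anticipate is exactly this last point: making rigorous the claim that each application of $E \mapsto \coker \delta_E$ raises the minimal height in the support by one. The clean way to do this is: (i) show that if a rational $G$-sheaf $F$ over $X$ has $F_x = 0$ for all $x$ with $\height(x) < m$, then $F$ is a sheaf over the closed subspace $X^{(m)}$ (extended by zero), so that $\delta_F \colon F \to I^0(F)$ and the cokernel computation can be carried out over $X^{(m)}$ where the height-$m$ points are the \emph{isolated} points; (ii) over a space whose isolated points we are collapsing, the map $\delta_F$ is an isomorphism on stalks at isolated points (skyscrapers at isolated points are already injective and $\delta$ is the identity there), so $\coker \delta_F$ vanishes at all isolated points of $X^{(m)}$, i.e.\ at all points of height exactly $m$; (iii) therefore $\coker \delta_F$ vanishes for all $x$ with $\height(x) \le m$, i.e.\ its support lies in $X^{(m+1)}$. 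Starting from $C^{-1} = E$ with the vacuous bound ($m = -1$, or rather no constraint), inducting gives that $C^{n-1}$ vanishes at all $x$ with $\height(x) \le n-1$, hence $I^n(E)_x = I^0(C^{n-1})_x = (C^{n-1})_x = 0$ unless $\height(x) \ge n$, which is the claim. Throughout, one uses Lemma \ref{lem:equivariantcantor} to know that height is $G$-invariant so that "supported on $X^{(m)}$" is a $G$-stable condition and the orbit-indexed product in $I^0$ interacts correctly with it.
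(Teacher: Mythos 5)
Your final argument, steps (i)--(iii), is correct and is essentially the paper's route: induct on $n$, observe that $\delta_F$ is a stalkwise isomorphism at isolated points of (the ambient space of) the support of $F$, and conclude that each successive cokernel is supported one step further along the Cantor--Bendixson filtration, so that $C^{n-1}$ lives on the closed $G$-invariant set $X^{(n)}$ and $I^n(E)=I^0(C^{n-1})$ has vanishing stalks at every point admitting a neighbourhood disjoint from $X^{(n)}$, i.e.\ at every point of height less than $n$. The paper's proof is exactly this induction (with the bookkeeping deferred to the non-equivariant reference), using Lemma \ref{lem:equivariantcantor} to know the sets $X^{(m)}$ are $G$-invariant so that the orbit-indexed product in Definition \ref{def:godestage} restricts to them.

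However, your second and third paragraphs contain a genuine error which you notice but never correctly diagnose. The claim that $I^0(F)_x=F_x$ because ``the factor for the orbit $Gx$ is the only surviving one'' is false in general: the stalk is a filtered colimit and does not commute with the infinite product over orbits, so at a non-isolated point $x$ the stalk $I^0(F)_x$ contains germs of arbitrary tuples of sections over the infinitely many orbits accumulating at $x$ and is typically much larger than $F_x$. This is precisely why $\coker\delta_F$ is nonzero and the Godement resolution has positive length; the ``paradox'' you derive (that every $C^{n-1}$ would vanish identically) is the reductio of that false identity, not a subtlety about which factors $\delta$ lands in, and your attempted resolution via the inductive hypothesis does not address it. The identity $I^0(F)_x\cong F_x$ is valid only in the two situations your clean argument actually needs: when $x$ is isolated in the relevant ambient space (so that $\{x\}$ is open and sections of the product over it reduce to the single factor $F_x$), and when $x$ has a neighbourhood disjoint from the closed support of $F$ (so both sides vanish). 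Excise the general claim, justify the stalk computations only in those two cases, and the proof goes through as the paper intends.
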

\begin{proof}
The proof is by induction. The base case follows from the fact that $(\delta_E)_x$ 
is an isomorphism for any isolated point of $X$.
Similarly, if $x$ has a open neighbourhood where the only non-trivial stalk is at $x$, 
then $(\delta_E)_x$ is an isomorphism. The equivariant Godemont resolution 
with the Cantor--Bendixson process gives the inductive step using Lemma \ref{lem:equivariantcantor}.
Further details are given in the non-equivariant case of \cite[Lemma 3.7]{sugrue19}. 
\end{proof}


If we restrict ourselves to the case of scattered spaces of finite rank
(the Cantor--Bendixson process ends in the empty set after finitely many steps) 
this theorem gives an upper bound for the length of the equivariant Godemont resolutions. 

\begin{corollary}\label{cor:idupperbound}
Let $G$ be a profinite group. Let $X$ be a scattered profinite $G$-space of Cantor--Bendixson rank $n \in \bN$. 
The category of rational $G$-sheaves over $X$ has homological dimension at most $n-1$. 
\end{corollary}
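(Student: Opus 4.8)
The plan is to show that $I^n(E) = 0$ for $n \geq \rank_{CB}(X)$, which immediately yields an injective resolution of length at most $n-1$ and hence the bound on homological dimension. First I would invoke the preceding theorem, which shows that the stalk $I^m(E)_x$ vanishes unless the height of $x$ is at least $m$. Since $X$ is scattered of finite Cantor--Bendixson rank $n$, every point $x \in X = X_S$ has height $\height(x) \leq n-1$ (the Cantor--Bendixson process terminates at $\emptyset$ after $n$ steps, so $X^{(n)} = \emptyset$, meaning no point survives to height $n$). Therefore $I^n(E)_x = 0$ for all $x \in X$, and a sheaf with all stalks zero is the zero sheaf.

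Consequently the equivariant Godement resolution of Definition \ref{godeequi} takes the form
\[
0 \lra E \lra I^0(E) \lra I^1(E) \lra \cdots \lra I^{n-1}(E) \lra 0,
\]
terminating because $\coker \delta_{(\cdots)}$ at the final stage has $I^n$ equal to zero, forcing that cokernel itself to be zero. This is an injective resolution of $E$ of length $n-1$, since each $I^m(E)$ is injective by the lemma following Definition \ref{def:godestage}. As $E$ was an arbitrary rational $G$-sheaf over $X$, every object of $\Rsheaf{X}{\bQ}$ admits an injective resolution of length at most $n-1$, so the category has homological (injective) dimension at most $n-1$.

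The only genuinely substantive point is the claim that in a scattered space of Cantor--Bendixson rank $n$ every point has height strictly less than $n$; I expect this to be the main thing to get right, though it is essentially immediate from the definitions, since $\rank_{CB}(X) = n$ means $X^{(n)} = X^{(n+1)} = \cdots$ and scatteredness means this stable value $X_H$ is empty, so $X^{(n)} = \emptyset$ and no point can lie in $X^{(n)}$. One should also note that we only need to resolve a single object: the homological dimension bound follows by dimension shifting from the existence of one injective resolution of bounded length for each object, which the Godement construction provides functorially. No truncation argument beyond observing $I^n(E) = 0$ is required.
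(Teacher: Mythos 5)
Your proposal is correct and follows essentially the same route as the paper: both deduce from the preceding theorem that every stalk of $I^n(E)$ vanishes because a scattered space of rank $n$ has $X^{(n)}=\emptyset$ and hence no points of height $\geq n$, so the equivariant Godement resolution is an injective resolution of length at most $n-1$. Your additional observations (that the monomorphism $\delta$ into the zero sheaf forces the final cokernel to vanish, and that a uniform bound on injective resolution length for all objects gives the bound on homological dimension) are just explicit spellings-out of what the paper leaves implicit.
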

\begin{proof}
If the rank is $n$, then $X^{(n)} =\emptyset$ and every point has height at most $n-1$.
Hence, every stalk of $I^n(X)$ is zero, so the sheaf is itself zero. 
\end{proof}

\subsection{The case of the constant sheaf}

It remains to prove that the upper bound on homological dimension is in fact an equality. 
To that end, we take the equivariant Godemont resolution of the constant sheaf at $\bQ$, $\const\bQ$.
\begin{align*}
\xymatrix{
0\ar[r]
&
\const\bQ
\ar[r]^-{\delta_0}
&
I^0 \ar[r]^-{\delta_1}
&
I^1 \ar[r]^-{\delta_2}
&
\ldots \ar[r] &
I^{n-1}\\
}
\end{align*}
Adding a small assumption on $X$ we can prove this resolution has length exactly $n-1$.

\begin{proposition}\label{prop:nonzerosections}
Let $G$ be a profinite group, let $X$ be a profinite scattered $G$-space of 
Cantor--Bendixson rank $n$ and assume that each $x \in X$ 
has a neighbourhood basis $\mcB_x$ of $\stab_G(x)$-invariant sets.
In the equivariant Godemont resolution of the constant sheaf, the cokernel of 
$\delta_i(U)$  has a non-zero $\stab_G(x)$-equivariant
section for each $U \in \mcB_x$ whenever $i$ is smaller than the height of $x$.
\end{proposition}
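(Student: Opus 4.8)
The plan is to argue by induction on $i$, in step with the Cantor--Bendixson filtration and following the non-equivariant template of \cite[Section~3]{sugrue19}. The first thing I would record is the geometric input. Since $G$ is profinite and $X$ is a profinite $G$-space, every $G$-orbit is the image of the compact space $G/\stab_G(z)$ under a continuous map into the Hausdorff space $X$, hence is closed; combining this with the closedness of each $X^{(j)}$ and with Lemma~\ref{lem:equivariantcantor} and the discussion preceding it gives the fact I will use throughout: every neighbourhood of a point $x$ of height $k$ contains height-$j$ points for each $j<k$, and for fixed $j$ these come from infinitely many \emph{distinct} $G$-orbits, since a single orbit of height-$j$ points is closed and omits $x$ and so cannot accumulate at $x$. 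Set $Z^0=\const\bQ$ and $Z^i=\coker\delta_{i-1}$ for $i\geq1$; then $I^i=I^0(Z^i)$ and $\coker\delta_i=Z^{i+1}$. The inductive claim I would carry is the strengthening: \emph{$Z^i$ is supported on $X^{(i)}$, and for every $y$ of height $\geq i$ the stalk $(Z^i)_y$ contains a non-zero $\stab_G(y)$-fixed vector.} The construction proving this at level $i+1$ will in passing give the Proposition at level $i$.

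At every level the section is built in the same way. Given $x$ with $\height(x)>i$ and $U\in\mcB_x$, let $A$ range over the (infinitely many, pairwise distinct) $G$-orbits of height-$i$ points meeting $U$. For each such $A$ pick $z_A\in A$ and, using the inductive claim, a non-zero $\stab_G(z_A)$-fixed vector of $(Z^i)_{z_A}$; via the equivariant skyscraper description $Z^i|_A\cong G\times_{\stab_G(z_A)}(Z^i)_{z_A}$ of \cite[Lemma~8.3]{BSsheaves} this determines a $G$-invariant section $\sigma_A$ of $Z^i|_A$ that is non-zero at every point of $A$. Let $s\in I^0(Z^i)(U)=I^i(U)$ be the section whose $A$-component is $\sigma_A|_{A\cap U}$ for these orbits and $0$ on every other orbit. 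Since $U$ is $\stab_G(x)$-invariant and each $\sigma_A$ is $G$-invariant, $s$ is $\stab_G(x)$-fixed. For $i=0$ this is just the characteristic section of the open $\stab_G(x)$-invariant set $Y=\{y\in U:\height(y)=0\}$, assembled from the constant function $1$ on each orbit of isolated points.

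To see $[s]\neq 0$ in $\coker(\delta_i(U))$: a preimage $s=\delta_i(t)$, $t\in I^{i-1}(U)$, would descend (through $I^{i-1}\twoheadrightarrow Z^i\hookrightarrow I^0(Z^i)$) to a section $\bar{t}\in Z^i(U)$ with $\bar{t}|_{A\cap U}=\sigma_A|_{A\cap U}$ on the chosen orbits and $\bar{t}|_{A'\cap U}=0$ on every other orbit $A'$; as the orbit slices partition $U$ this forces $\supp(\bar{t})=\{y\in U:\height(y)=i\}$ exactly. But $\height(x)>i$ gives $x\notin\supp(\bar{t})$ while $x\in\overline{\supp(\bar{t})}$, contradicting the closedness of the support of a sheaf section. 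Running the same argument over each smaller $W\in\mcB_x$ shows $s$ is nowhere locally in the image near $x$, so its class passes to a non-zero $\stab_G(x)$-fixed element of the stalk $(\coker\delta_i)_x=Z^{i+1}_x$. Finally, to keep the inductive claim alive one checks $Z^{i+1}$ is supported on $X^{(i+1)}$: when $\height(y)<i$ this is immediate from the already-established vanishing of $I^i_y$; when $\height(y)=i$ it follows because $y$ is then isolated in $X^{(i)}$, so (using the inductive support statement for $Z^i$) the orbits meeting a small neighbourhood of $y$ carry no height-$\geq i$ points there, whence $(Z^i)_y\to I^0(Z^i)_y$ is an isomorphism and $Z^{i+1}_y=0$.

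The step I expect to be the main obstacle is keeping the inductive hypothesis in precisely this reinforced form and verifying it propagates. The support statement for the cokernel sheaves is needed both to make the support argument in the previous paragraph work and to run the isolated-in-$X^{(i)}$ computation, and establishing it requires feeding the stalk-vanishing theorem proved above through the equivariant skyscraper description of \cite[Lemma~8.3]{BSsheaves}. By contrast the equivariance is cheap once one works with $G$-invariant sections on the orbit slices --- but it is here that the hypothesis on $\mcB_x$ is essential: one needs $U$, and the smaller members of $\mcB_x$ used in the localisation near $x$, to be $\stab_G(x)$-invariant so that $\sigma_A|_{A\cap U}$ stays fixed and the localisation remains inside a cofinal invariant family of neighbourhoods.
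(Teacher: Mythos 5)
Your proof is correct and follows essentially the same strategy as the paper: construct a $\stab_G(x)$-fixed element of $I^i(U)$ out of $G$-invariant sections supported on infinitely many orbits accumulating at $x$ while leaving the $Gx$-component zero, and rule out a preimage under $\delta_i$ by the germ-at-$x$/closed-support argument, with equivariance coming for free from the invariant neighbourhood basis. The only organisational difference is bookkeeping for the induction: the paper keeps the process going via the alternating (parity of height) choice of which orbit components are non-zero, whereas you place the section exactly on the height-$i$ orbits and carry the strengthened inductive hypothesis that $\coker\delta_{i-1}$ is supported on $X^{(i)}$ with non-zero fixed vectors in its stalks there --- both devices serve the same purpose, and your version fills in the details that the paper delegates to the non-equivariant case in \cite[Lemma 4.3]{sugrue19}.
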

\begin{proof}
We start with the case of $\delta_0$ with $x$ of height at least 1.
Take a $\stab_G(x)$-invariant open neighbourhood of $x$. By 
Lemma \ref{lem:equivariantcantor} $U$ contains infinitely many points of other orbits
which are of lower height than $x$. 

Choose a non-zero element of $\const \bQ(U)_x=\bQ$ represented by  
a section $s \in \const \bQ(U)$.
We define a section $t$ of $I^0=\prod_{A} i^*_A E_{|A}$ by the sequence
\[
A \mapsto \begin{cases} 
0 & \textrm{$\text{ht}(A)$ has the same parity as $\text{ht}(x)$} \\
s_{|A} & \textrm{otherwise.}
\end{cases}
\]
Since $s$ is non-zero at infinitely many points near $x$, $t_x$ is non-zero. 
If $t=\delta_0(s')$ for some $s' \in \const \bQ(U)$ then $s'_{|Gx}=0$. 
This implies that there is an open neighbourhood of $x$ where 
$s'$ restricts to zero and hence $s'_y=0$ for all $y$ in that open neighbourhood, 
which implies that $t_x=0$, a contradiction.

The rest follows inductively, as with the non-equivariant case of \cite[Lemma 4.3]{sugrue19}, 
with two changes.
The first is that when we need to construct a new non-zero section we 
use the alternating process described previously.
The second is that since our sets $U$ are $\stab_G(x)$-invariant and we begin with a 
$\stab_G(x)$-equivariant section, all sections constructed in the proof are 
$\stab_G(x)$-equivariant. 

Note that the assumption on $i$ and the height of $x$ is required to ensure that the 
new section we construct is non-zero at infinitely many orbits. 
\end{proof}

We can now give the equivariant analogue of \cite[Theorem 4.4]{sugrue19}.

\begin{theorem}
Let $X$ be profinite $G$-space such that each $x \in X$ 
has a neighbourhood basis $\mcB_x$ of $\stab_G(x)$-invariant 
sets, for $G$ a profinite group.

If $X$ is a scattered $G$-space of Cantor--Bendixson rank $n$ and 
$x$ has height $n-1$ then 
\[
\ext^{n-1} \left(G\underset{\stab_G(x)}{\times} \bQ, \const\bQ\right) \neq 0.
\]
Hence, the homological dimension of the category of rational $G$-sheaves over $X$ is $n-1$. 

If $X$ has infinite Cantor--Bendixson rank then 
the homological dimension of the category of rational $G$-sheaves over $X$ is infinite.
\end{theorem}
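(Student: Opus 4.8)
The plan is to trap the homological dimension between the upper bound coming from Corollary \ref{cor:idupperbound} and a matching lower bound exhibited by a single non-vanishing Ext group, and then to bootstrap the infinite-rank statement from the same local construction. Throughout I would take $F=\const\bQ$ for the second variable and, for $x$ a suitable point, $E=G\times_{\stab_G(x)}\bQ$ for the first. When $X$ is scattered of rank $n$ we have $X^{(n)}=\emptyset$ and $X^{(n-1)}\neq\emptyset$, so a point $x$ of height $n-1$ exists; since Corollary \ref{cor:idupperbound} already bounds the homological dimension of $\Rsheaf{X}{\bQ}$ by $n-1$, it suffices to show $\ext^{n-1}(E,\const\bQ)\neq 0$.

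To compute this I would use the equivariant Godement resolution $\const\bQ\to I^\bullet$ of Definition \ref{godeequi}, whose terms are injective and which, by the vanishing argument in the proof of Corollary \ref{cor:idupperbound}, satisfies $I^n=0$; it is therefore an injective resolution of $\const\bQ$ of length $n-1$, so that
\[
\ext^{n-1}(E,\const\bQ)=\coker\big(\Hom(E,I^{n-2})\to\Hom(E,I^{n-1})\big).
\]
Next I would make the Hom-groups explicit: using the equivariant skyscraper sheaf $G\times_{\stab_G(x)}(-)$ and its adjunction with the stalk functor, together with the hypothesis that $x$ has a neighbourhood basis $\mcB_x$ of $\stab_G(x)$-invariant sets, a morphism $E\to I^j$ amounts to a $\stab_G(x)$-equivariant germ of $I^j$ at $x$ that is supported near $x$ on the orbit $Gx$; under this identification the displayed cokernel is governed by the sections produced in Proposition \ref{prop:nonzerosections}. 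Concretely, $\delta_{n-1}\colon I^{n-2}\to I^{n-1}$ factors through the Godement monomorphism $\coker\delta_{n-2}\hookrightarrow I^{n-1}$, so a non-zero $\stab_G(x)$-equivariant section of $\coker\delta_{n-2}$ over some $U\in\mcB_x$ yields a germ at $x$ which is a cocycle (it lies in $\ker\delta_n$ by exactness of the resolution) but not a coboundary, its non-triviality in $\coker\delta_{n-2}$ being precisely the obstruction to lifting it $\stab_G(x)$-equivariantly and compatibly near $x$ to $I^{n-2}$. Proposition \ref{prop:nonzerosections}, applied with $i=n-2<n-1=\height(x)$, supplies such a section, so $\ext^{n-1}(E,\const\bQ)\neq 0$; with the upper bound the homological dimension is exactly $n-1$. (For finite $G$ the space $\sub G$ is discrete of rank $1$, so this recovers homological dimension $0$.)

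For the infinite-rank statement I would repeat the argument at a point $x$ of arbitrary finite height $m$, using only the local construction near $x$. The Godement resolution need no longer terminate, but by exactness $\coker\delta_{m-1}$ is still the kernel of $\delta_{m+1}$ inside $I^m$, so a non-zero $\stab_G(x)$-equivariant section of $\coker\delta_{m-1}$ over some $U\in\mcB_x$ — which the construction of Proposition \ref{prop:nonzerosections} provides whenever $m-1<m=\height(x)$ — again represents a non-zero class in $H^m\big(\Hom(G\times_{\stab_G(x)}\bQ,I^\bullet)\big)=\ext^m\big(G\times_{\stab_G(x)}\bQ,\const\bQ\big)$. If $\rank_{CB}(X)$ is an infinite ordinal then $X^{(m)}\setminus X^{(m+1)}\neq\emptyset$ for every $m\in\bN$ (otherwise the Cantor--Bendixson process would stabilise at the finite stage $m$), so $X$ has points of every finite height; hence $\ext^m\neq 0$ for all $m$ and the homological dimension is infinite.

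The main obstacle is the middle step: correctly identifying $\Hom(G\times_{\stab_G(x)}\bQ,I^j)$ through the skyscraper adjunction and verifying that the sections of Proposition \ref{prop:nonzerosections} really represent non-zero cohomology classes of the complex $\Hom(G\times_{\stab_G(x)}\bQ,I^\bullet)$, that is, the cocycle/coboundary bookkeeping. This is where the $\stab_G(x)$-equivariance must be threaded through the entire construction and where the hypothesis on the neighbourhood bases $\mcB_x$ is used; modulo this equivariant upgrade the argument follows the non-equivariant \cite[Theorem 4.4]{sugrue19}.
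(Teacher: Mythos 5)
Your proposal is correct and follows essentially the same route as the paper: sandwiching the dimension between Corollary \ref{cor:idupperbound} and a non-vanishing $\ext^{n-1}$ computed by applying $\Hom(G\times_{\stab_G(x)}\bQ,-)$ to the equivariant Godement resolution, identifying each $\Hom$-group via the skyscraper adjunction as $\stab_G(x)$-fixed germs at $x$, and using the alternating sections of Proposition \ref{prop:nonzerosections} to show the last map is not surjective. The infinite-rank case is handled identically in both, by running the local construction at points of every finite height.
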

\begin{proof}
We begin with a general calculation of maps out of an equivariant skyscraper sheaf
into $I^0(E)$ for $E$ some rational $G$-sheaf over $X$.
\begin{align*}
\hom \left(G\underset{\stab_G(x)}{\times} \bQ, I^0(E) \right) 
\cong &
\prod_{A} \hom \left(G\underset{\stab_G(x)}{\times} \bQ,  i^*_A E_{|A} \right) \\
\cong &
\hom \left(G\underset{\stab_G(x)}{\times} \bQ,  E_{|Gx} \right) \\
\cong &
\hom \left(G\underset{\stab_G(x)}{\times} \bQ,  G\underset{\stab_G(x)}{\times} E_x \right) \\
\cong &
E_x^{\stab_G{x}}\\
\end{align*}
The final term has fixed points as $\bQ$ has the trivial $\stab_G{x}$-action. 

Assume that $X$ is a scattered $G$-space of Cantor--Bendixson rank $n$ and 
$x$ has height $n-1$. 
Applying our calculation to our resolution of $\const \bQ$ we see that our $\ext$ groups are the homology
of the chain complex
\[
\bQ 
\overset{\alpha_0}{\lra} (\coker \delta_0)_x^{\stab_G(x)}
\overset{\alpha_1}{\lra} (\coker \delta_1)_x^{\stab_G(x)}
\overset{\alpha_2}{\lra}
\cdots
\overset{\alpha_{n-1}}{\lra}
(\coker \delta_{n-2})_x^{\stab_G(x)}. 
\]
By Proposition \ref{prop:nonzerosections} we have a non-zero $\stab_G(x)$-equivariant section
which implies that 
\[
(\delta_i)_x^{\stab_G(x)} \neq 0
\]
whenever $i$ is smaller than $n-1$.
By a similar argument to Proposition \ref{prop:nonzerosections}
we see that $\alpha_{n-1}$ is not surjective, hence the $n$th $\ext$ group is non-zero.
This calculation and Corollary \ref{cor:idupperbound} show that
the homological dimension is $n-1$. 

In the case of infinite Cantor--Bendixson rank we see that for each $n$ there is a 
point of height $n$, hence our earlier work shows that the homological dimension is infinite.
\end{proof}

The space of closed subgroups $\sub G$ of a profinite group $G$ 
always satisfies the condition on the invariant neighbourhood basis, see 
\cite[Section 2]{BSmackey}. 

\begin{corollary}\label{cor:injectivedimmain}
Let $G$ be a profinite group whose space of subgroups $\sub G$ is scattered of Cantor--Bendixson rank $n$.
The homological dimension of the category of rational Weyl-$G$-sheaves is $n-1$. 

If $\sub G$ has infinite Cantor--Bendixson rank, then the homological dimension 
of the category of rational Weyl-$G$-sheaves is infinite.
\end{corollary}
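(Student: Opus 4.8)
The plan is to read this off from the (unlabelled) theorem immediately preceding it, applied to the $G$-space $X=\sub G$. It has just been recorded that $\sub G$ has the required property that every point admits a neighbourhood basis of $\stab_G$-invariant sets, so that theorem already computes the homological dimension of the category of rational $G$-sheaves over $\sub G$ to be $n-1$ when $\sub G$ is scattered of rank $n$, and infinite when the rank is infinite. The only extra ingredient needed is the comparison between rational $G$-sheaves over $\sub G$ and rational Weyl-$G$-sheaves.

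For that comparison I would use that the inclusion $\iota$ of rational Weyl-$G$-sheaves into rational $G$-sheaves over $\sub G$ is full and exact (kernels and cokernels of maps of sheaves are computed stalkwise, and $K$-fixedness of stalks is preserved by kernels and cokernels of $\bQ$-modules), and that it admits an exact right adjoint $r$, given fibrewise by the $K$-fixed points $r(E)_K=E_K^K$; this functor and the adjunction are part of the sheaf-theoretic machinery of \cite{BSmackey} and \cite{BSsheaves}. Exactness of $r$ reduces to exactness of the $K$-fixed-point functor on discrete $\bQ[K]$-modules, which holds because the higher continuous cohomology of the profinite group $K$ with coefficients in such a module vanishes, each finite quotient having no higher cohomology with $\bQ$-coefficients. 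Granting this, $\iota$ exact implies $r$ preserves injectives, and $r$ exact together with $r\iota\cong\id$ implies that applying $r$ to the equivariant Godement resolution of a rational Weyl-$G$-sheaf $W$ yields an injective resolution of $W$ inside the category of rational Weyl-$G$-sheaves, of length no greater than that of the original. In the scattered rank-$n$ case Corollary \ref{cor:idupperbound} bounds that length by $n-1$, so the homological dimension of rational Weyl-$G$-sheaves is at most $n-1$.

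For the reverse inequality I would note that both sheaves witnessing the non-vanishing $\ext$-group in the proof of the preceding theorem, namely the constant sheaf $\const\bQ$ and, for $x\in\sub G$ of height $n-1$, the equivariant skyscraper sheaf $G\underset{\stab_G(x)}{\times}\bQ$, are themselves rational Weyl-$G$-sheaves, since their stalks carry trivial stabiliser actions. By the adjunction, $\Hom(G\underset{\stab_G(x)}{\times}\bQ, r(I^k))\cong\Hom(\iota(G\underset{\stab_G(x)}{\times}\bQ), I^k)$ for the terms $I^k$ of the Godement resolution of $\const\bQ$, so the cochain complex computing $\ext^*(G\underset{\stab_G(x)}{\times}\bQ,\const\bQ)$ in rational Weyl-$G$-sheaves coincides with the one computed there; hence $\ext^{n-1}(G\underset{\stab_G(x)}{\times}\bQ,\const\bQ)\neq 0$ and the homological dimension is exactly $n-1$. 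When $\sub G$ has infinite Cantor--Bendixson rank there is a point of height $m$ for every $m\in\bN$, so the same argument forces the homological dimension to be infinite. The step I expect to be the main obstacle is setting up the exact right adjoint $r$ to $\iota$ with the claimed stalks and adjointness: its construction as a sheaf and the verification of the adjunction genuinely use the $\stab_G$-invariant neighbourhood bases on $\sub G$, and this is precisely the input imported from \cite{BSmackey} and \cite{BSsheaves}; everything else is formal homological algebra of exact full subcategories and a direct appeal to the preceding theorem.
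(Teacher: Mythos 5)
Your argument is correct in outline, but it takes a noticeably heavier route than the paper. The paper's entire proof of this corollary is the observation that every sheaf appearing in the proof of the preceding theorem --- the constant sheaf $\const\bQ$, the equivariant skyscraper sheaves $G\underset{\stab_G(x)}{\times}\bQ$, and every term $I^k$ of the equivariant Godement resolution --- has stalks with trivial (hence $K$-fixed) stabiliser action, so the whole argument, including the injectivity of the $I^k$ (which comes from the skyscraper/stalk adjunction and the injectivity of all discrete $\bQ[\stab_G(x)]$-modules, an argument that works verbatim in the full subcategory of Weyl-$G$-sheaves), already takes place inside $\weylsheaf{G}$; no comparison functor between the two sheaf categories is needed. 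You instead manufacture the comparison via an exact right adjoint $r$ to the inclusion $\iota$, given fibrewise by $K$-fixed points, and then transport injective resolutions and $\ext$-computations across the adjunction. That is sound formal homological algebra \emph{granting} $r$: the stalkwise exactness of $(-)^K$ on discrete rational modules is indeed fine, but the assembly of the fibrewise fixed points into a sheaf and the verification of the adjunction is the one genuinely non-formal ingredient, and it is not established anywhere in this paper --- you are right to flag it as the obstacle, and you should be aware that the paper deliberately sidesteps it. What your approach buys is a general statement comparing the homological dimensions of $\eqsheaf{G}{X}$ and the Weyl subcategory over any suitable $X$; what the paper's buys is a one-line proof. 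In this specific situation your machinery collapses to the paper's observation anyway, since $r(I^k)=I^k$ once one notices the $I^k$ are already Weyl-$G$-sheaves, so you could shorten your argument to exactly that remark.
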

\begin{proof}
All the sheaves used in the proof of the theorem were stalk-wise fixed and hence were Weyl-$G$-sheaves.
\end{proof}

There is a remaining case of spaces which have finite Cantor--Bendixson rank and non-empty perfect hull.
We make an equivariant version of \cite[Conjecture 4.6]{sugrue19}.

\begin{conjecture}\label{conj:hull}
Let $G$ be a profinite group. If the $G$-space $X$ has finite Cantor--Bendixson rank and non-empty perfect hull, then the homological dimension 
of rational $G$-sheaves over $X$ is infinite.
\end{conjecture}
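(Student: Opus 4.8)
As this is a conjecture we do not give a proof; instead we record the approach we expect to work and the step at which it stalls. The plan is to run the argument of the scattered case treated above, the only change being that for a space with non-empty perfect hull the equivariant Godement resolution of the constant sheaf $\const\bQ$ never terminates, so one must argue directly that it cannot be replaced by a finite injective resolution. First I would localise the problem: fix a point $x$ in the perfect hull $X_H$ and a $\stab_G(x)$-invariant clopen neighbourhood $U$ of $x$. Restriction to $U$ together with the equivariant skyscraper adjunction of \cite[Lemma~8.3]{BSsheaves} embeds a full exact abelian subcategory of rational $G$-sheaves over $X$ into rational $\stab_G(x)$-sheaves over $U$, and $U\cap X_H$ is again a non-empty perfect profinite space. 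So it is enough to bound the homological dimension from below for a profinite group acting on a profinite space with non-empty perfect hull; and, exactly as in the proof of Corollary~\ref{cor:injectivedimmain}, every sheaf produced in such an argument will be stalk-wise fixed, so the same bound transfers to Weyl-$G$-sheaves in the case $X=\sub G$. In particular the statement contains the non-equivariant \cite[Conjecture~4.6]{sugrue19} as the case of trivial $G$-action, which is itself open.

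For the core of the argument I would try to produce, for every $k$, a rational $G$-sheaf $\mcF_k$ with $\ext^k(\mcF_k,\const\bQ)\neq 0$. The scattered-case proof builds a non-zero section of $\coker\delta_i$ at a point of height $i$ by the alternating-parity construction of Proposition~\ref{prop:nonzerosections}, and it terminates precisely because heights are bounded by $n-1$. When $x$ lies in $X_H$ the situation is the opposite: $x$ is a limit of points of $X_H$, each of which is again a limit of points of $X_H$, and so on without bound. This infinitely iterated accumulation should let the alternating-section construction be continued at every stage, giving that $\coker\delta_i$ has a non-zero stalk near $x$ for all $i$, and hence that the Godement resolution of $\const\bQ$ has infinite length. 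One then wants to convert this into non-vanishing of infinitely many $\ext$ groups, for instance by testing against an infinite iterated extension of equivariant skyscraper sheaves indexed by a nested family $x_0 \succ \{x_1^{(k)}\} \succ \{x_2^{(k,l)}\} \succ \cdots$ of accumulation points inside $X_H$, whose $\ext$ against $\const\bQ$ can be computed degree by degree through the long exact sequences of local cohomology.

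The hard part — and the reason this remains a conjecture — is exactly this last conversion. An infinite Godement resolution does not by itself force infinite homological dimension: one must rule out every shorter injective resolution, i.e.\ exhibit $\ext^i\neq 0$ for arbitrarily large $i$. In the finite-rank scattered case this relied on a finiteness/parity trick: the relevant complex $\bQ\to(\coker\delta_0)_x^{\stab_G(x)}\to\cdots\to(\coker\delta_{n-2})_x^{\stab_G(x)}$ is \emph{finite}, so non-surjectivity of its last map already produces a non-zero top $\ext$ group. For a perfect hull the analogous complex is infinite and no such shortcut is available. Moreover a single orbit is not enough: a direct local-cohomology computation (using that a profinite space and the complement of a point in it have no higher cohomology with constant coefficients, together with the injectivity of discrete $\bQ[\stab_G(x)]$-modules from \cite[Proposition~3.1]{CW16}) shows that for $x\in X_H$ one has $\ext^i\big(G\underset{\stab_G(x)}{\times}\bQ,\const\bQ\big)=0$ for $i\geq 2$, so only a genuinely iterated construction along a nested family as above can work, and controlling the higher local cohomology of $\const\bQ$ along such a family is the obstacle we have not resolved. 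A successful argument here would simultaneously settle the non-equivariant \cite[Conjecture~4.6]{sugrue19}.
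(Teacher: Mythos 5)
The statement is an open conjecture: the paper offers no proof of it, and you correctly refrain from claiming one, instead recording why the scattered-case argument breaks down when the perfect hull is non-empty. Your assessment matches the paper's framing (an equivariant analogue of \cite[Conjecture 4.6]{sugrue19}, left open), so there is nothing to compare beyond noting that your identification of the obstruction --- an unbounded Godement resolution alone does not force non-vanishing of $\ext^i$ for arbitrarily large $i$ --- is a reasonable account of why the finite-rank proof does not extend.
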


Using the equivalence between Weyl-$G$-sheaves and rational $G$-Mackey functors, 
\cite[Theorem A]{BSmackey}, we obtain the following calculation of homological dimensions
for categories of rational $G$-Mackey functors. 

\begin{corollary}
Let $G$ be a profinite group whose space of subgroups $\sub G$ is scattered of Cantor--Bendixson rank $n$.
The homological dimension of the category of rational $G$-Mackey functors is $n-1$. 

If $\sub G$ has infinite Cantor--Bendixson rank, then the homological dimension 
of the category of rational $G$-Mackey functors is infinite.
\end{corollary}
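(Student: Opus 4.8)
The plan is to deduce the statement immediately from Corollary \ref{cor:injectivedimmain} together with the exact equivalence of abelian categories between rational $G$-Mackey functors and rational Weyl-$G$-sheaves provided by Theorem \ref{thm:equivalencemain}. The underlying principle is that the homological (injective) dimension of an abelian category with enough injectives is an invariant preserved by exact equivalences, so once it is known for $\Rweylsheaf{G}{\bQ}$ it is known for $\mackey{G}$.

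First I would recall that the homological dimension of such a category may be computed as the supremum, over all objects $M$ and $N$, of the largest $k$ for which $\ext^k(M,N) \neq 0$; equivalently, as the supremum of the lengths of minimal injective resolutions. Next I would observe that the equivalence $F \colon \mackey{G} \to \Rweylsheaf{G}{\bQ}$ of Theorem \ref{thm:equivalencemain} is exact and admits an exact quasi-inverse. Hence $F$ preserves monomorphisms and short exact sequences, so it carries injective objects to injective objects and injective resolutions to injective resolutions. It follows that $F$ induces natural isomorphisms on all $\ext^k$ groups, and therefore the two categories have the same homological dimension.

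Finally I would invoke Corollary \ref{cor:injectivedimmain}: when $\sub G$ is scattered of Cantor--Bendixson rank $n$ the category $\Rweylsheaf{G}{\bQ}$ has homological dimension $n-1$, and when $\sub G$ has infinite Cantor--Bendixson rank it has infinite homological dimension. Transporting these conclusions along $F$ yields the corollary for $\mackey{G}$.

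There is essentially no obstacle here: the mathematical content is entirely contained in the earlier results. The one point needing a line of care is the standard fact that an exact equivalence preserves injectivity --- which relies on the quasi-inverse also being exact, so that a monomorphism into an object in the target corresponds to a monomorphism in the source --- and everything else is bookkeeping.
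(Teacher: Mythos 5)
Your proposal is correct and matches the paper's argument exactly: the paper also deduces this corollary by transporting Corollary \ref{cor:injectivedimmain} along the exact equivalence of Theorem \ref{thm:equivalencemain} between rational $G$-Mackey functors and rational Weyl-$G$-sheaves. Your added remark that an exact equivalence with exact quasi-inverse preserves injectives and hence $\ext$ groups is the right justification for the step the paper leaves implicit.
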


\begin{example}
Using our calculations from Example \ref{ex:rankexamples} we can give the homological dimension
of some algebraic models.

For $G$ a finite group, the homological dimension of rational Weyl-$G$-sheaves is zero. 
In terms of Mackey functors, this says that every rational $G$-Mackey functor
is injective, as was proven independently 
by two sources, Greenlees and May \cite[Appendix A]{gremay95} and Th\'evenaz
and Webb \cite[Theorems 8.3 and 9.1]{tw95}. See also \cite[Theorem 4.28]{BKalgmodels}
by the first author and K\k{e}dziorek.

For $G=\adic{p}$, the $p$-adic integers, the homological dimension of rational Weyl-$G$-sheaves is one,
which agrees with \cite[Lemma 6.2]{barnespadic}.
\end{example}

\bibliographystyle{alpha}
\bibliography{ourbib}

\begin{thebibliography}{LMSM86}

\bibitem[Bar08]{barnesthesis}
D.~Barnes.
\newblock {\em Rational Equivariant Spectra}.
\newblock PhD thesis, University of Sheffield, 2008.
\newblock {\tt arXiv: 0802:0954v1}.

\bibitem[Bar11]{barnespadic}
D.~Barnes.
\newblock Rational $\mathbb{Z}_p$--equivariant spectra.
\newblock {\em Algebr. Geom. Topol.}, 11(4):2107?2135, 2011.

\bibitem[BB04]{BB04}
W.~Bley and R.~Boltje.
\newblock Cohomological {M}ackey functors in number theory.
\newblock {\em J. Number Theory}, 105(1):1--37, 2004.

\bibitem[BK22]{BKalgmodels}
D.~Barnes and M.~K\k{e}dziorek.
\newblock An introduction to algebraic models for rational {$G$}-spectra.
\newblock In {\em Equivariant topology and derived algebra}, volume 474 of {\em
  London Math. Soc. Lecture Note Ser.}, pages 119--179. Cambridge Univ. Press,
  Cambridge, 2022.

\bibitem[BR20]{brfoundations}
David Barnes and Constanze Roitzheim.
\newblock {\em Foundations of stable homotopy theory}, volume 185 of {\em
  Cambridge Studies in Advanced Mathematics}.
\newblock Cambridge University Press, Cambridge, 2020.

\bibitem[BS22a]{BSmackey}
D.~Barnes and D.~Sugrue.
\newblock The equivalence between rational {$G$}-sheaves and rational
  {$G$}-mackey functors for profinite {$G$}.
\newblock {\tt arXiv:2002.11745}, To appear in Math. Proc. Cambridge Philos.
  Soc., 2022.

\bibitem[BS22b]{BSsheaves}
D.~Barnes and D.~Sugrue.
\newblock Equivariant sheaves for profinite groups.
\newblock {\em Topol. Appl.}, 319, 2022.

\bibitem[CW16]{CW16}
I.~Castellano and T.~Weigel.
\newblock Rational discrete cohomology for totally disconnected locally compact
  groups.
\newblock {\em J. Algebra}, 453:101--159, 2016.

\bibitem[DST19]{DSTspectral}
M.~Dickmann, N.~Schwartz, and M.~Tressl.
\newblock {\em Spectral spaces}, volume~35 of {\em New Mathematical
  Monographs}.
\newblock Cambridge University Press, Cambridge, 2019.

\bibitem[Fau08]{fausk}
H.~Fausk.
\newblock Equivariant homotopy theory for pro-spectra.
\newblock {\em Geom. Topol.}, 12(1):103--176, 2008.

\bibitem[GM95]{gremay95}
J.~P.~C. Greenlees and J.~P. May.
\newblock Generalized {T}ate cohomology.
\newblock {\em Mem. Amer. Math. Soc.}, 113(543):viii+178, 1995.

\bibitem[Gre]{Gconjecture}
J.~P.~C. Greenlees.
\newblock Triangulated categories of rational equivariant cohomology theories.
\newblock {Oberwolfach Reports 8/2006, 480-488}.

\bibitem[Gre12]{tnq2}
J.~P.~C. Greenlees.
\newblock Rational torus-equivariant stable homotopy {II}: {A}lgebra of the
  standard model.
\newblock {\em J. Pure Appl. Algebra}, 216(10):2141--2158, 2012.

\bibitem[GS10a]{GartSmith10}
P.~Gartside and M.~Smith.
\newblock Classifying spaces of subgroups of profinite groups.
\newblock {\em J. Group Theory}, 13(3):315--336, 2010.

\bibitem[GS10b]{GartSmith10count}
P.~Gartside and M.~Smith.
\newblock Counting the closed subgroups of profinite groups.
\newblock {\em J. Group Theory}, 13(1):41--61, 2010.

\bibitem[HHR]{HillHopkinsRavenel}
M.~A. Hill, M.~J. Hopkins, and D.~C. Ravenel.
\newblock On the non-existence of elements of {K}ervaire invariant one.
\newblock In preparation.

\bibitem[HM19]{HMequivarianttensor}
M.~Hill and K.~Mazur.
\newblock An equivariant tensor product on {M}ackey functors.
\newblock {\em J. Pure Appl. Algebra}, 223(12):5310--5345, 2019.

\bibitem[Ill83]{illman}
S.~Illman.
\newblock The equivariant triangulation theorem for actions of compact {L}ie
  groups.
\newblock {\em Math. Ann.}, 262(4):487--501, 1983.

\bibitem[Lin76]{lindner}
H.~Lindner.
\newblock A remark on {M}ackey-functors.
\newblock {\em Manuscripta Math.}, 18(3):273--278, 1976.

\bibitem[LMSM86]{lms86}
L.~G. Lewis, Jr., J.~P. May, M.~Steinberger, and J.~E. McClure.
\newblock {\em Equivariant stable homotopy theory}, volume 1213 of {\em Lecture
  Notes in Mathematics}.
\newblock Springer-Verlag, Berlin, 1986.
\newblock With contributions by J. E. McClure.

\bibitem[MM02]{mm02}
M.~A. Mandell and J.~P. May.
\newblock Equivariant orthogonal spectra and {$S$}-modules.
\newblock {\em Mem. Amer. Math. Soc.}, 159(755):x+108, 2002.

\bibitem[MMSS01]{mmss01}
M.~A. Mandell, J.~P. May, S.~Schwede, and B.~Shipley.
\newblock Model categories of diagram spectra.
\newblock {\em Proc. London Math. Soc. (3)}, 82(2):441--512, 2001.

\bibitem[RZ00]{rz00}
L.~Ribes and P.~Zalesskii.
\newblock {\em Profinite groups}, volume~40 of {\em Ergebnisse der Mathematik
  und ihrer Grenzgebiete. 3. Folge. A Series of Modern Surveys in Mathematics
  [Results in Mathematics and Related Areas. 3rd Series. A Series of Modern
  Surveys in Mathematics]}.
\newblock Springer-Verlag, Berlin, 2000.

\bibitem[Seg68]{segal68}
G.~Segal.
\newblock Equivariant {$K$}-theory.
\newblock {\em Inst. Hautes \'Etudes Sci. Publ. Math.}, (34):129--151, 1968.

\bibitem[SS03]{ss03stabmodcat}
S.~Schwede and B.~Shipley.
\newblock Stable model categories are categories of modules.
\newblock {\em Topology}, 42(1):103--153, 2003.

\bibitem[Sug19a]{sugrue19}
D.~Sugrue.
\newblock Injective dimension of sheaves of rational vector spaces.
\newblock {\em J. Pure Appl. Algebra}, 223(7):3112--3128, 2019.

\bibitem[Sug19b]{sugruethesis}
D.~Sugrue.
\newblock Rational {$G$}-spectra for profinite {$G$}.
\newblock {\tt arXiv} 1910.12951, 2019.

\bibitem[Thi11]{Thiel}
U.~Thiel.
\newblock Mackey functors and abelian class field theories.
\newblock {\tt arXiv} 1111.0942, 2011.

\bibitem[TW95]{tw95}
J.~Th\'{e}venaz and P.~Webb.
\newblock The structure of {M}ackey functors.
\newblock {\em Trans. Amer. Math. Soc.}, 347(6):1865--1961, 1995.

\bibitem[Wil98]{wilson98}
J.~S. Wilson.
\newblock {\em Profinite groups}, volume~19 of {\em London Mathematical Society
  Monographs. New Series}.
\newblock The Clarendon Press, Oxford University Press, New York, 1998.

\bibitem[Wil22]{Williamson22}
J.~Williamson.
\newblock Algebraic models of change of groups functors in (co)free rational
  equivariant spectra.
\newblock {\em J. Pure Appl. Algebra}, 226(11):Paper No. 107108, 53, 2022.

\end{thebibliography}

\end{document}